\theoremstyle{plain}
\newtheorem{thm}{Theorem}[subsection]
\newtheorem*{thm*}{Theorem}
\newtheorem{cor}[thm]{Corollary}
\newtheorem{lem}[thm]{Lemma}
\newtheorem*{conj*}{Conjecture}
\newtheorem*{rem*}{Remark}
\newtheorem*{fact*}{Fact}
\newtheorem*{prop*}{Proposition}
\theoremstyle{definition}
\newtheorem{defn}[thm]{Definition}
\newtheorem*{defn*}{Definition}
\newtheorem*{ack}{Acknowledgement}
\newtheorem{rem}[thm]{Remark}
\newtheorem{prop}[thm]{Proposition}
\newtheorem{lemma}[thm]{Lemma}
\theoremstyle{remark}
\begin{document}

\title{On the pro-modularity in the residually reducible case for some totally real fields}
\author{Xinyao Zhang}
\maketitle

\begin{abstract}
	In this article, we study the relation between the universal deformation rings and big Hecke algebras in the residually reducible case. Following the strategy of Skinner-Wiles and Pan's proof of the Fontaine-Mazur conjecture, we prove a pro-modularity result. Based on this result, we also give a conditional big $R=\mathbb{T}$ theorem over some totally real fields, which is a generalization of Deo's result.
\end{abstract}

\footnotetext{2020 Mathematics Subject Classification: Primary - 11F80; Secondary - 11F85}
\footnotetext{Keywords: pseudo-representation; modularity}

\section{Introduction}

Let $\bar{\rho}_0: \operatorname{Gal}(\bar{\mathbb{Q}}/\mathbb{Q}) \to \operatorname{GL}_2(\mathbb{F})$ be a continuous odd representation, where $\mathbb{F}$ is finite field with characteristic $p \ge 3$. Let $R_{\bar{\rho}_0}$ be the universal deformation ring of $\bar{\rho}_0$. It is conjectured by Gouv\^ea (\cite{gouvea2006arithmetic}) that $R_{\bar{\rho}_0}$ is isomorphic to some $p$-adic big Hecke algebra. When $\bar{\rho}_0$ is absolutely irreducible, this question is studied by B\"ockle (\cite{bockle2001density}) and Allen (\cite{allen2019automorphic}). Recently, Deo has explored the reducible case in \cite{deo2023density}. In this article, we try to generalize his results to some totally real fields.

More precisely, suppose that $\bar{\rho}_0^{\textnormal{ss}} =1 \oplus \bar{\chi}$ for some character $\bar{\chi}$ unramified outside $Np$, where $N$ is a positive integer. Let $R_{\bar{\rho}_x}$ be the universal deformation ring of $\rho_x$, where $0 \ne x \in H^1(G_{\mathbb{Q}, Np}, \mathbb{F}(\bar{\chi}))$ and $\rho_x$ is the unique reducible Galois representation determined by $x$ such that $\rho_x^{\textnormal{ss}}=1 \oplus \bar{\chi}$. Let $R_{\bar{\rho}_0}^{\textnormal{pd}}$ be the universal pseudo-deformation ring of $\bar{\rho}_0^{\textnormal{ss}}$. Deo proves the following results.


\begin{thm}{\cite[Theorem A, Theorem 4.5]{deo2023density}}
	Let $N_0$ be the tame Artin conductor of $\bar{\rho}_0$. Suppose $N_0|N$, $\bar{\chi}|_{G_{\mathbb{Q}_p}} \ne \mathbf{1}, \omega_p^{-1}$, $\operatorname{dim}_{\mathbb{F}} H^1(G_{\mathbb{Q}, Np}, \mathbb{F}(\bar{\chi}))=1$ and $p \nmid \phi(N)$. Then
	
	1) There is an isomorphism $ (R_{\bar{\rho}_0}^{\textnormal{pd}})^{\operatorname{red}} \cong T(N)_{\bar{\rho}_0}$, where $T(N)_{\bar{\rho}_0}$ is the localization of the big Hecke algebra $T(N)$ (of level $N$) at the maximal ideal $\mathfrak{m}_0$ determined by $\bar{\rho}_0$.
	
	2) There exists an isomorphism $ R_{\bar{\rho}_x} \twoheadrightarrow T(N)_{\bar{\rho}_0}$.
\end{thm} 
 
In his proof, a crucial assumption is the cyclicity of $H^1(G_{\mathbb{Q}, Np}, \mathbb{F}(\bar{\chi}))$. Under this assumption, Deo uses the fact that the natural map $R_{\bar{\rho}_0}^{\textnormal{pd}} \to R_{\bar{\rho}_x} $ is actually a surjection proved by Kisin \cite[Corollary 1.4.4]{kisin2009fontaine} in the proof of 1), and constructs a surjection $R_{\bar{\rho}_x} \twoheadrightarrow T(N)_{\bar{\rho}_0} $ induced by $R_{\bar{\rho}_0}^{\textnormal{pd}} \twoheadrightarrow T(N)_{\bar{\rho}_0}$ by the theory of Generalized Matrix Algebra in the proof of 2). However, such a cyclicity assumption is absurd when we consider a totally real field $F$ instead of $\mathbb{Q}$ due to the global characteristic formula.

To generalize Deo's results to some totally real fields, we need a new strategy, which is inspired by the proof of the Fontaine-Mazur conjecture in the residually reducible case from Skinner-Wiles (\cite{skinner1999residually}) and Pan (\cite{pan2022fontaine}).
Specifically, we study the pro-modularity of the universal deformation rings.

Now we state our main results.

 Let $p>3$ be a rational prime. Now we assume that $F$ is an abelian totally real field of even degree such that $p$ splits completely in $F$. Let $\Sigma$ be a finite set of finite places of $F$ containing $\Sigma_p $ consisting of all the places $v \mid p$ . Let $\mathcal{O}$ be the ring of integers of some local field $E$ with a uniformizer $\pi$ and residue field $\mathbb{F}$, and $\chi: \operatorname{Gal}(\bar{F}/F) \to \mathcal{O}^\times$ a totally odd character unramified outside $\Sigma$, and $\bar{\chi}=\chi ~\operatorname{mod} ~\pi$. Let $F_{\Sigma}$ be the maximal extension of $F$ unramified outside $\Sigma$ and all infinite places. Let $\epsilon, \omega_p$ be the $p$-adic, $\operatorname{mod}~p$ cyclotomic character, respectively.
 

For an element $0 \ne x \in H^1(F_{\Sigma}/F, \mathbb{F}(\bar{\chi}^{-1}))$, let $\rho_x$ be the unique non-split reducible Galois representation determined by $x$ such that $\rho_x^{\textnormal{ss}}=1 \oplus \bar{\chi}$. Let $R_x^\textnormal{aux}$ be the universal deformation ring of $\rho_x$ of auxiliary type (see Definition \ref{auxiliary}).
  Let $R_\textnormal{aux}^\textnormal{ps} $ be the universal pseudo-deformation ring of the residual pseudo-representation $1+\bar{\chi}$ with fixed determinant $\chi$. Let $\mathbb{T}_{\xi}$ be the big $p$-adic Hecke algebra (see Definition \ref{big}), and assume that $ \bar{\rho}_0^{\textnormal{ss}}$ determines a maximal ideal $\mathfrak{m}_{\xi}$. For a prime $\mathfrak{p}$ of $R_\textnormal{aux}^\textnormal{ps} $ (resp. $R_x^\textnormal{aux}$), we say that it is pro-modular if it comes from a prime of $(\mathbb{T}_{\xi})_{\mathfrak{m}_{\xi}}$ (see Definition \ref{nice} and Definition \ref{pro-modular} for the explicit definitions). 

\begin{thm} [Theorem \ref{pro}, Theorem \ref{MINIMAL} \& Theorem \ref{special case}] \label{main} Keep notations as above. For any place $v \in \Sigma \setminus \Sigma_p$, assume that $p \mid \operatorname{Nm}(v)-1$. Assume that the character $\chi$ satisfies the following conditions:
	
	1) $\chi$ is unramified outside $ \Sigma_p$,
	
	2) $\chi(\operatorname{Frob}_v) \equiv 1~\operatorname{mod}~\pi$ for  
	$v \in \Sigma \setminus \Sigma_p$.
	
	3) $\bar{\chi}$ can be extended to a character of $G_{\mathbb{Q}}$.
	
	4) $\bar{\chi} \mid_{G_{F_v}} \ne 1, \omega_p^{-1}$ for any $v \mid p$.
	
	5) $\chi \mid_{G_{F_v}}$ is de Rham for any $v \mid p$.
	
	(1) If $[F:\mathbb{Q}] \ge 7|\Sigma \setminus \Sigma_p|+4$, then for any irreducible component of $R_\textnormal{aux}^\textnormal{ps} $ of dimension at least $1+2[F:\mathbb{Q}]$, it is pro-modular of dimension $1+2[F:\mathbb{Q}]$.
	
   (2) If $[F:\mathbb{Q}] \ge \operatorname{max}\{1+\frac{1}{2}\operatorname{dim}_{\mathbb{F}}(H^1(F_{\Sigma}/F, \mathbb{F}(\bar{\chi}^{-1}))), 7|\Sigma \setminus \Sigma_p|+4\}$. Then for any non-zero cocycle $x \in H^1(F_{\Sigma}/F, \mathbb{F}(\bar{\chi}^{-1}))$, any prime ideal of $ R_x^\textnormal{aux}$ is pro-modular. Further, $R_x^\textnormal{aux}$ is a local complete intersection ring of Krull dimension $1+2[F:\mathbb{Q}]$.
\end{thm}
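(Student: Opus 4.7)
The plan is to adapt the strategy of Skinner--Wiles \cite{skinner1999residually} and Pan \cite{pan2022fontaine} for the residually reducible case to the abelian totally real setting. By construction the pro-modular locus inside $\operatorname{Spec}(R_\textnormal{aux}^\textnormal{ps})$---the image of $\operatorname{Spec}((\mathbb{T}_\xi)_{\mathfrak{m}_\xi})$---is closed, so for (1) it suffices to exhibit, on any irreducible component $C$ of dimension $\ge 1+2[F:\mathbb{Q}]$, a Zariski dense collection of pro-modular primes. Part (2) then follows by transferring (1) to $R_x^\textnormal{aux}$ along the surjection $R_\textnormal{aux}^\textnormal{ps} \twoheadrightarrow R_x^\textnormal{aux}$ provided by \cite[Corollary~1.4.4]{kisin2009fontaine}.

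For (1), I would proceed in three steps. First, introduce a class of \emph{nice} primes of $R_\textnormal{aux}^\textnormal{ps}$, corresponding to crystalline deformations of sufficiently regular Hodge--Tate weights at the places $v\mid p$ with fixed determinant $\chi$, and show via a global Galois cohomology and Euler characteristic computation that such primes are Zariski dense on $C$. The numerical assumption $[F:\mathbb{Q}] \ge 7|\Sigma\setminus\Sigma_p|+2$ is calibrated so that the local contributions from the auxiliary primes (bounded above by $7$ per prime, thanks to $p\mid \operatorname{Nm}(v)-1$) cannot exhaust the global dimension, while the conditions $\bar\chi\mid_{G_{F_v}} \ne 1, \omega_p^{\pm 1}$ together with the de Rham hypothesis at $v\mid p$ make the local crystalline deformation rings at $p$ smooth of the expected dimension. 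Second, using Hida theory for Hilbert modular forms over $F$ (with $p$ split completely, so that the ordinary Hida family has the expected relative dimension), realize each nice prime as a classical automorphic point, hence as a pro-modular prime, producing a dense subset of $C$ in the pro-modular locus. Third, propagate pro-modularity throughout $C$ in the Skinner--Wiles style: vanishing of adjoint Selmer groups at the nice pro-modular points---available via Iwasawa-theoretic input that requires $\bar\chi$ to extend to $G_\mathbb{Q}$ and $F$ to be abelian---implies that pro-modularity is preserved under first-order deformations, so the whole component $C$ is pro-modular by the closedness of the pro-modular locus.

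For (2), the extra hypothesis $[F:\mathbb{Q}] \ge 1+\tfrac{1}{2}\dim_{\mathbb{F}} H^1(F_\Sigma/F,\mathbb{F}(\bar\chi^{-1}))$, combined with the global Euler characteristic formula over the totally real $F$ and the vanishing of $H^0(F_\Sigma/F,\mathbb{F}(\bar\chi^{\pm 1}\omega_p^{\pm 1}))$ forced by the condition $\bar\chi\mid_{G_{F_v}} \ne 1,\omega_p^{\pm 1}$, forces the equality $\dim_{\mathbb{F}} H^1(F_\Sigma/F,\mathbb{F}(\bar\chi^{-1})) = \dim_{\mathbb{F}} H^1(F_\Sigma/F,\mathbb{F}(\bar\chi)) = [F:\mathbb{Q}]$ needed in Proposition~\ref{dim aux}. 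Consequently $R_x^\textnormal{aux}$ is a local complete intersection of Krull dimension exactly $1+2[F:\mathbb{Q}]$. Along Kisin's surjection, each irreducible component of $\operatorname{Spec}(R_x^\textnormal{aux})$ then corresponds to an irreducible component of $\operatorname{Spec}(R_\textnormal{aux}^\textnormal{ps})$ of the same dimension $1+2[F:\mathbb{Q}]$, which is pro-modular by (1). Since pro-modularity is preserved under passage to quotients, every prime of $R_x^\textnormal{aux}$ is pro-modular.

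The main obstacle I expect is the propagation step in (1). In \cite{deo2023density}, cyclicity of $H^1(G_{\mathbb{Q},Np},\mathbb{F}(\bar\chi))$ collapses the family of reducible deformations to a single extension class; in the totally real setting this cohomology group has dimension growing linearly with $[F:\mathbb{Q}]$, so many linearly independent extensions must be handled simultaneously and the pro-modular locus is genuinely higher dimensional. Controlling the adjoint Selmer groups uniformly along $C$ then requires a genuine Iwasawa-theoretic input (main conjectures for $\bar\chi$ and $\bar\chi\omega_p^{-1}$), which is precisely why the hypotheses of abelian $F$, of $p$ splitting completely, of $\mathcal{O}$ containing enough $p$-power roots of unity, and of $\bar\chi$ descending to a character of $G_\mathbb{Q}$ become essential.
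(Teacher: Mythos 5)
There are several genuine gaps here, and the overall architecture of your argument does not match what is actually needed. The most serious problem is in part (1): exhibiting a Zariski dense set of pro-modular points on $C$ and appealing to ``closedness of the pro-modular locus'' is not how the component is shown to be pro-modular, and your density claim is itself unsubstantiated --- there is no a priori reason that crystalline (or regular de Rham) points are dense on an arbitrary high-dimensional component of the \emph{pseudo}-deformation ring. What the paper actually does is cut $C$ down to its ordinary locus $C^{\textnormal{ord}}$ (of dimension $\ge [F:\mathbb{Q}]+1$, one principal ideal per place above $p$), use Pan's finiteness of $R^{\textnormal{ps,ord}}$ over $\Lambda_F$ and his ordinary modularity theorem to make a component $C_1^{\textnormal{ord}}$ pro-modular, and then --- this is the crux --- locate inside $C_1^{\textnormal{ord}}$ a \emph{nice} prime in the precise sense of Definition \ref{nice}: a one-dimensional characteristic-$p$ prime whose associated representation is irreducible, reduces to a non-split upper-triangular extension, and satisfies $\rho(\mathfrak{q})^o\mid_{G_{F_v}}=\bar\rho_b\mid_{G_{F_v}}$ at every $v\in\Sigma\setminus\Sigma_p$. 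Pro-modularity of all of $C$ then follows from the patched $R_{\mathfrak{q}}=\mathbb{T}_{\mathfrak{q}}$ theorem (Theorem \ref{R=T}) localized at that single nice prime, not from any first-order deformation or adjoint Selmer group argument. The construction of the nice prime is the paper's main innovation (Corollary \ref{innovation'}): one repeatedly kills or normalizes the off-diagonal entries $y(\theta,\alpha)$ for $\theta$ ranging over Frobenius and inertia generators at the auxiliary places, losing at most $7|\Sigma\setminus\Sigma_p|$ dimensions in total, which is exactly where the bound $[F:\mathbb{Q}]\ge 7|\Sigma\setminus\Sigma_p|+2$ comes from; your reading of that constant as a local Euler characteristic contribution is not what is going on. Your proposal omits this entire step, and without it there is no way to verify the local conditions at $\Sigma\setminus\Sigma_p$ required for niceness.

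Part (2) of your proposal contains two further errors. First, there is no surjection $R_\textnormal{aux}^\textnormal{ps}\twoheadrightarrow R_x^\textnormal{aux}$ in this setting: Kisin's argument \cite[Corollary 1.4.4]{kisin2009fontaine} requires $\dim_{\mathbb{F}}H^1(F_\Sigma/F,\mathbb{F}(\bar\chi^{-1}))=1$, which fails over a totally real field since the global Euler characteristic formula forces this dimension to be at least $[F:\mathbb{Q}]$; the paper explicitly identifies the failure of this surjectivity as the reason a new method is needed. The correct transfer from $R_x^\textnormal{aux}$ to $R_\textnormal{aux}^\textnormal{ps}$ goes through Proposition \ref{pan} (the completed local rings agree at one-dimensional irreducible primes, and minimal primes with irreducible generic representation map to minimal primes of no smaller dimension). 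Second, the hypothesis $[F:\mathbb{Q}]\ge 1+\tfrac12\dim_{\mathbb{F}}H^1(F_\Sigma/F,\mathbb{F}(\bar\chi^{-1}))$ does \emph{not} force the equality $\dim H^1(\bar\chi^{-1})=\dim H^1(\bar\chi)=[F:\mathbb{Q}]$ --- the paper's own remark emphasizes that the point of the theorem is to relax that equality to an inequality. Its actual role is to guarantee $\dim R_x^{\textnormal{aux,red}}\le 1+\dim H^1(\bar\chi^{-1})<1+2[F:\mathbb{Q}]$ (using $\delta_F=0$ for abelian $F$), so that no irreducible component of $R_x^\textnormal{aux}$ lies entirely in the reducible locus and Proposition \ref{pan} applies to its generic point, yielding a minimal prime of $R_\textnormal{aux}^\textnormal{ps}$ of dimension at least $1+2[F:\mathbb{Q}]$ to which part (1) can be applied.
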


	To prove such a theorem, we follow Pan's strategy in \cite{pan2022fontaine}, and a key tool is the patching argument (Theorem \ref{R=T}) at a nice prime (see Definition \ref{nice}). Using Theorem \ref{R=T}, we only need to find a nice prime in each irreducible component of large dimension in the universal pseudo-deformation ring. It can be found successfully if the degree $[F: \mathbb{Q}]$ is large enough using Corollary \ref{innovation'}.

\begin{rem}
	One may ask how we can use these pro-modularity results.
	
	The first part of Theorem \ref{main}, i.e., the pro-modularity of the universal pseudo-deformation ring, is actually a complement of the first step in Pan's proof of the Fontaine-Mazur conjecture in the residually reducible case (see \cite[page 1032]{pan2022fontaine}) so that after an abelian base change, the conjecture is a direct consequence of (1) of Theorem \ref{main} and \cite[Corollary 3.5.10]{pan2022fontaine}, although the full pro-modularity is not needed in his proof. In fact, Pan's strategy is to find a ``potentially nice prime'', i.e., it becomes nice  after some soluble base change,  and it is much easier to construct. Then he can identify some irreducible components of the universal (pseudo-)deformation ring as in the big Hecke algebra, which is enough for his purpose.
	
	 However, in the missing case of the Fontaine-Mazur conjecture in \cite{pan2022fontaine}, i.e., $p=3$ and $\bar{\chi}|_{G_{F_v}}=\omega_p^{-1}$ for any $v \mid p$, to study the pro-modularity of some irreducible component of the universal (pseudo-)deformation ring, we need to know the pro-modularity of some other ones, which means we actually need a (potentially) big $R=\mathbb{T}$ theorem in this case. More precisely, the ``dihedral condition'' (the third one in (2) of Definition \ref{nice}) sometimes obstructs the existence of the potentially nice primes, whereas the strategy of finding nice primes in this article, especially the application of Corollary \ref{innovation'}, still works so that we just need to find pro-modular primes rather than nice primes (see Proposition \ref{promodular to nice}). In other words, our method in this paper, combining some classical patching arguments, can be able to deal with some subtle cases of the conjecture, especially the case $p=3$ and $ \bar{\chi}=\omega_3$. Therefore, it allows us to drop Pan's second assumption of \cite[Theorem 1.0.4]{pan2022fontaine} and hence concludes the two-dimensional Fontaine-Mazur conjecture in the regular case for odd primes. See Remark \ref{adv} and \cite[Section 5]{ZHANG2024} for more details.
	 
	 We expect that our method can be generalized to prove more modularity or automorphy lifting theorems in the residually reducible case.
\end{rem}

\begin{rem}
	One may also ask whether we can find some examples such that our theorems may apply.
	
	Our assumptions on the global  character $\chi$ roughly state that it ``comes from" the determinant of a continuous two-dimensional geometric Galois representation $\rho$ of $G_{\mathbb{Q}}$. In other words, starting from such a character $\chi=\operatorname{det} \rho$, using Grunwald-Wang's theorem (see \cite[Theorem 5, page 80]{artin}), we can find a totally real abelian number field $F$ satisfying all of our assumptions of (1) of Theorem \ref{main}. In other words, our pro-modularity result can be viewed as a potential big $R=\mathbb{T}$ theorem for general abelian totally real fields and global characters. This is also the way of applying this theorem to prove the Fontaine-Mazur conjecture.
\end{rem}

	\begin{rem}
		We point out that compared to Deo's results, our pro-modularity result on the universal pseudo-deformation ring does not need the restrictive assumption about the dimension of the cohomology group.
		
		Our pro-modularity result on the universal deformation ring is an analogue to Deo's result \cite[Theorem 4.5]{deo2023density}. It is related to the dimension of the cohomology group since we need to bound the reducible locus of the deformation ring as in \cite{skinner1999residually}.
	\end{rem}


	
	
	
	

\subsection{Notations and conventions}
Throughout the paper, $F$ is a totally real field and $\mathbb{A}_F$ is the adele ring. For a finite place $v$ of $F$, we write $F_v$ for the completion of $F$ at $v$, $\mathcal{O}_{F_v}$ for its ring of integers, $\pi_v$ for a uniformizer and $k(v)$ for the residue field. We denote the absolute Galois group and the inertia group of $F_v$ by $G_{F_v}$ and $I_v$, respectively. For a rational prime $l$, we denote $\mathcal{O}_F \otimes_\mathbb{Z} \mathbb{Z}_l$ by $\mathcal{O}_{F,l}$. We write $\operatorname{Nm}(v)$ for the norm of $v$, and write $\operatorname{Frob}_v$ for the geometric Frobenius element. 

We fix a finite field $\mathbb{F}$ of characteristic $p>3$. Let $\mathcal{O}$ be a ring of integers of some local field $E$ (a finite extension of $\mathbb{Q}_p$) with a uniformizer $\pi$ and residue field $\mathbb{F}$. Let $\mathscr{C}_{\mathcal{O}}^{f}$ be the category of Artinian local $\mathcal{O}$-algebras with residue field $\mathbb{F}$, and $\mathscr{C}_{\mathcal{O}}$ be the category of topological local $\mathcal{O}$-algebras which are isomorphic to inverse limits of objects of $\mathscr{C}_{\mathcal{O}}^{f}$. For a universal (pseudo-)deformation ring, we mean a universal object in $\mathscr{C}_{\mathcal{O}}$ which pro-represents the functor from $\mathscr{C}_{\mathcal{O}}^{f}$ to the category of sets sending $R$ to the set of (pseudo-)deformations over $R$.

For a CNL ring $R$, we mean that $R$ is a complete Noetherian local ring with maximal ideal $\mathfrak{m}_R$. If $\mathfrak{p}$ is a prime ideal of $R$, we will denote its residue field by $k(\mathfrak{p})$. We write $R_{\mathfrak{p}}$ for the localization at $\mathfrak{p}$, and $\widehat{R_{\mathfrak{p}}}$ for the $\mathfrak{p}$-adic completion. We write $R^{\operatorname{red}}$ for the maximal reduced quotient of $R$. By $M_i(R)$, we mean the ring of $i \times i$ matrices over $R$.

We write $\epsilon, \omega_p$ for the $p$-adic, $\operatorname{mod}~p$ cyclotomic character, respectively.

\begin{ack}
	The author would like to express appreciation to his supervisor Professor Takeshi Saito for his careful reading,  pointing out some mathematical errors and many helpful suggestions and comments.
\end{ack}

\section{Pseudo-representations}

Pseudo-representations are the main tools to study the modularity of Galois representations in the residually reducible case. We recommend the article \cite{bellaiche2009families} to readers for a general theory of pseudo-representations.

\subsection{Two-dimensional pseudo-representations}
In this section, we will give some results on 2-dimensional pseudo-representations. Our main references are \cite{skinner1999residually} and \cite{pan2022fontaine}.

\begin{defn} \label{pseudo}
	For a profinite group $G$ and a topological commutative ring $R$ in which 2 is invertible, a 2-dimensional \textit{pseudo-representation} is a continuous function $T: G \to R$ such that 
	
	1) $T(1)=2$,
	
	2) there exists an order 2 element $c \in G$ such that $T(c)=0$,
	
	3) $T(\sigma \tau)=T(\tau \sigma)$ for all $ \sigma, \tau \in G$,
	
	4) $T(\gamma \delta \eta)+T(\gamma \eta \delta)-T(\gamma \eta)T(\delta)-T(\eta \delta)T(\gamma)-T(\delta \gamma)T(\eta)+T(\gamma)T(\delta)T(\eta)=0,$
	for any $\delta, \gamma,\eta\in G$.
	
	The \textit{determinant} $\operatorname{det}(T)$ of a pseudo-representation $T$ is a character (using 4)) defined by $$ \operatorname{det}(T): G \to R^\times, ~
	\operatorname{det}(T)(\sigma)=\frac{1}{2}(T(\sigma)^2-T(\sigma^2)), \sigma \in G.$$

	For a pseudo-representation $T$ and $\sigma, \tau \in G$, define:
	
	1) $a(\sigma)=\frac{1}{2}(T(c\sigma)+T(\sigma))$,
	
	2) $d(\sigma)=\frac{1}{2}(-T(c\sigma)+T(\sigma))$,
	
	3) $y(\sigma, \tau)=a(\sigma \tau)-a(\sigma)a(\tau).$
\end{defn}

If $\rho: G \to \operatorname{GL}_2(R)$ is a continuous representation with an element $c \in G$ satisfying $\rho(c)=\begin{pmatrix}
	1 & ~\\
	~& -1
\end{pmatrix}
$, then $\operatorname{tr}(\rho)$ is a pseudo-representation. More explicitly, if $\rho(\sigma)=\begin{pmatrix}
	a_\sigma & b_\sigma\\
	c_\sigma& d_\sigma
\end{pmatrix}$, then $a(\sigma)=a_\sigma$, $d(\sigma)=d_\sigma$ are as defined above and $y(\sigma, \tau)=b_\sigma c_\tau$.

Pan \cite[2.1.4]{pan2022fontaine} verifies that the continuous functions $\{a,d,y\}$ satisfy the following equations.

(1) $y(\sigma, \tau)=d(\tau\sigma)-d(\tau)d(\sigma)$.

(2) $y(\sigma\tau, \delta)=a(\sigma)y(\tau, \delta)+y(\sigma, \delta)d(\tau)$.

(3) $y(\sigma, \tau\delta)=a(\delta)y(\sigma, \tau)+y(\sigma, \delta)d(\tau)$.

(4) $y(\alpha, \beta)y(\sigma, \tau)=y(\alpha, \tau)y(\sigma, \beta)$.\\

From the last equation, we can conclude the following two easy facts.

\begin{prop}\label{u}
	Let $R$ be an integral domain, and $T: G \to R$ be a pseudo-representation for a profinite group $G$. Let $\alpha, \beta $ be two elements in $ G$.
	
	1) If $y(\alpha, \beta) =0$ in $R$, then either $y(\alpha, \tau)=0$ or $y(\tau, \beta)=0$ for any $\tau \in G$.
	
	2) Assume that the set $J_{\beta} =\{\tau \in G : y(\beta, \tau)\ne 0\}$ (resp. $J'_{\beta} =\{\tau \in G : y(\tau, \beta) \ne 0\}$) is non-empty. For $\tau \in J_{\beta}$ (resp. $J'_{\beta}$), let $u(\alpha, \beta)(\tau)= \frac{y(\alpha, \tau)}{y(\beta, \tau)}$ (resp. $u'(\alpha, \beta)(\tau)=\frac{y(\tau, \alpha)}{y(\tau, \beta)}$) be an element in the fraction field of $R$. Then we have $u(\alpha, \beta)(\tau)=u(\alpha, \beta)(\tau')$ (resp. $u'(\alpha, \beta)(\tau)=u'(\alpha, \beta)(\tau')$)  for any $\tau, \tau' \in J_\beta$ (resp. $J'_{\beta}$). (Afterwards, we will use notations $u(\alpha, \beta)$ and $u'(\alpha, \beta)$ for simplicity.)
\end{prop}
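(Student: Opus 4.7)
The plan is to derive both parts formally from the single multiplicative identity (4), $y(\alpha,\beta)y(\sigma,\tau)=y(\alpha,\tau)y(\sigma,\beta)$, together with the integral-domain hypothesis on $R$; essentially everything reduces to choosing the right specialisation of the four free variables appearing in (4).

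For part (1), I would specialise (4) by setting its third argument $\sigma$ equal to the given element $\tau\in G$ from the statement. This yields the identity
\[ y(\alpha,\beta)\,y(\tau,\tau)=y(\alpha,\tau)\,y(\tau,\beta). \]
Since $y(\alpha,\beta)=0$ by hypothesis, the left-hand side vanishes, and the integral-domain property of $R$ forces at least one of $y(\alpha,\tau)$ and $y(\tau,\beta)$ to be zero, which is exactly the claim.

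For part (2), I would apply (4) with a different substitution. To prove well-definedness of $u(\alpha,\beta)$, substitute $(\alpha,\tau',\beta,\tau)$ in place of $(\alpha,\beta,\sigma,\tau)$ in (4) to obtain
\[ y(\alpha,\tau')\,y(\beta,\tau)=y(\alpha,\tau)\,y(\beta,\tau'). \]
Because $\tau,\tau'\in J_\beta$, both $y(\beta,\tau)$ and $y(\beta,\tau')$ are non-zero, and dividing through gives $y(\alpha,\tau)/y(\beta,\tau)=y(\alpha,\tau')/y(\beta,\tau')$ in the fraction field of $R$, as desired. The statement for $u'(\alpha,\beta)$ is entirely symmetric: substituting $(\tau,\alpha,\tau',\beta)$ for $(\alpha,\beta,\sigma,\tau)$ in (4) yields the analogous identity $y(\tau,\alpha)\,y(\tau',\beta)=y(\tau,\beta)\,y(\tau',\alpha)$, which one divides by $y(\tau,\beta)y(\tau',\beta)$.

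There is no serious obstacle: once one has identity (4) and the integral-domain assumption on $R$, the proposition is a purely formal bookkeeping exercise. The only thing requiring care is matching the abstract variables in (4) to the data of the statement; I would find it convenient to rewrite (4) in the abstract form $y(a,b)y(c,d)=y(a,d)y(c,b)$ and then choose $a,b,c,d$ as above in each case.
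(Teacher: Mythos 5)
Your proof is correct and coincides with the paper's intended argument: the paper simply remarks that both facts follow "from the last equation" (identity (4)), and your substitutions $(\alpha,\beta,\tau,\tau)$, $(\alpha,\tau',\beta,\tau)$ and $(\tau,\alpha,\tau',\beta)$ into $y(a,b)y(c,d)=y(a,d)y(c,b)$, combined with the integral-domain hypothesis, are exactly the routine verification being left to the reader.
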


\begin{defn}\label{construction}\cite[2.1.5]{pan2022fontaine}
	Assume that $R$  is either a field or a discrete valuation ring. For a pseudo-representation $T: G\to R$, a representation $\rho$ associated to $T$ is in the following form.
	
	1) $ \rho(\sigma)=
	\begin{pmatrix}
		a(\sigma) & ~\\
		~& d(\sigma)
	\end{pmatrix}$, if all $y(\sigma, \tau)=0$. We call this case reducible.
	
	2) Choose $\sigma_0, \tau_0$ such that $\frac{y(\sigma, \tau)}{y(\sigma_0, \tau_0)} \in R$ for any $ \sigma, \tau$, if $y(\sigma, \tau) \ne 0 $ for some $ \sigma, \tau$. Define $ \rho(\sigma)=
	\begin{pmatrix}
		a(\sigma) & \frac{y(\sigma, \tau_0)}{y(\sigma_0, \tau_0)}\\
		y(\sigma_0, \sigma)& d(\sigma)
	\end{pmatrix}$.  We call this case irreducible.
\end{defn}

\begin{rem}
	We can check that $\rho$ is actually absolutely irreducible, and is unique up to conjugation if $R$ is a field in the irreducible case.
\end{rem}

The following results are essential to our pro-modularity arguments.

\begin{prop}\label{innovation}
	Let $R$ be a CNL domain with maximal ideal $\mathfrak{m}_R$ and residue field $\mathbb{F}$. Let $G$ be a profinite group and $T: G \to R$ be a pseudo-representation of $G$. Assume that $T~\operatorname{mod}~\mathfrak{m}_R$ is reducible. Let $\alpha$ be an element of $G $ and write $I_{\alpha}$ for the ideal of $R$ generated by $\{y(\alpha, \beta), \beta \in G\}$. Assume that any minimal prime belonging to $I_{\alpha}$ (following \cite[page 52]{MR242802}) is of height at least $2$.
	
	1) For any $\sigma \in G$, $u(\sigma, \alpha)$ (defined in Proposition \ref{u}) is well-defined and integral over $R$.
	
	2) If $\alpha'$ is an another element of $G $ such that any minimal prime belonging to $I_{\alpha'}$ of $R$ generated by $\{y(\alpha', \beta), \beta \in G\}$ is of height at least $2$, then $u(\alpha, \alpha')$ is a unit in the normal closure of $R$.
\end{prop}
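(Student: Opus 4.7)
The plan is to show that $u(\sigma,\alpha)$ lies in the normalization $\widetilde{R}$ of $R$ inside $K:=\operatorname{Frac}(R)$, by checking membership in $\widetilde{R}_{\mathfrak{P}}$ at every height-one prime $\mathfrak{P}$ of $\widetilde{R}$. Part (2) will then follow formally from the tautology $u(\alpha,\alpha')u(\alpha',\alpha)=1$ together with the fact that a product of two integral elements of $\widetilde{R}$ equals $1$ forces each to be a unit.

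Because $R$ is a domain, the height hypothesis rules out $(0)$ as a minimal prime of $I_\alpha$, so $I_\alpha\neq 0$ and $J_\alpha\neq\emptyset$; Proposition~\ref{u} then defines $u(\sigma,\alpha)$ as an element of $K$. Picking any $\beta_0\in J_\alpha$ and dividing equation~(4) by $y(\alpha,\beta_0)$ yields
\[
u(\sigma,\alpha)\cdot y(\alpha,\tau)=y(\sigma,\tau)\qquad\text{for every }\tau\in G,
\]
and hence the crucial inclusion $u(\sigma,\alpha)\cdot I_\alpha\subseteq R$. Since $R$ is CNL it is excellent, so $\widetilde{R}$ is a finite $R$-module, hence a semi-local Noetherian normal (thus Krull) domain, satisfying $\widetilde{R}=\bigcap_{\operatorname{ht}\mathfrak{P}=1}\widetilde{R}_{\mathfrak{P}}$. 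Fix a height-one prime $\mathfrak{P}\subset\widetilde{R}$ and set $\mathfrak{p}:=\mathfrak{P}\cap R$. Universal catenarity of $R$ (a consequence of excellence) together with the dimension formula for the finite integral extension $R\hookrightarrow\widetilde{R}$ forces $\operatorname{ht}\mathfrak{p}=\operatorname{ht}\mathfrak{P}=1$, so by the height hypothesis $\mathfrak{p}\not\supseteq I_\alpha$. Picking $x\in I_\alpha\setminus\mathfrak{p}$, we have that $x$ is a unit of $R_{\mathfrak{p}}$ and $u(\sigma,\alpha)\cdot x\in R$, which gives $u(\sigma,\alpha)\in R_{\mathfrak{p}}\subseteq\widetilde{R}_{\mathfrak{P}}$. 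Intersecting over all height-one primes of $\widetilde{R}$ places $u(\sigma,\alpha)$ in $\widetilde{R}$, proving (1).

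For (2), applying (1) with $\alpha$ and $\alpha'$ in both orders shows that $u(\alpha,\alpha')$ and $u(\alpha',\alpha)$ both lie in $\widetilde{R}$; their product is $1$ directly from Proposition~\ref{u}, so $u(\alpha,\alpha')\in\widetilde{R}^{\times}$. The delicate technical point is the height-preservation $\operatorname{ht}(\mathfrak{P}\cap R)=\operatorname{ht}\mathfrak{P}$ for the normalization $R\hookrightarrow\widetilde{R}$: without it, a height-one prime of $\widetilde{R}$ could contract to a prime of $R$ sitting inside $V(I_\alpha)$, and the descent to $R_{\mathfrak{p}}$ would collapse. Justifying this step through the excellence and universal catenarity of CNL rings is therefore the main obstacle in the argument.
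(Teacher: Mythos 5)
Your proposal is correct and follows essentially the same route as the paper: both arguments pass to the normalization, use the intersection of localizations at height-one primes of the normal ring, justify that contraction to $R$ preserves height via finiteness (Nagata/excellence) and universal catenarity of CNL rings, and deduce part (2) from $u(\alpha,\alpha')u(\alpha',\alpha)=1$. Your explicit observation that $u(\sigma,\alpha)\cdot I_\alpha\subseteq R$ is a slightly cleaner packaging of the paper's step of finding, for each height-one prime, some $\beta$ with $y(\alpha,\beta)$ outside it, but the substance is identical.
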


\begin{proof} 
    Let $B$ be the normal closure of $R$, and $i: R \hookrightarrow B$ be the natural inclusion. As $R$ is a CNL domain, by \cite[Lemma 15.106.3]{stacks-project}, $R$ is unibranch. Thus, $B$ is also a local domain.
    
     We first recall a useful result in commutative algebra.
	
	\begin{lemma}\label{commutative algebra}
		 Let $\mathfrak{p}$ be a prime ideal of $B$ of height $d$. Then $i^{-1}(\mathfrak{p})$ is also of height $d$ in $R$.
	\end{lemma}
	
	\begin{proof}
		This is \cite[Proposition 9.2]{eisenbud2013commutative}.
		
		
	\end{proof}
	
	Now we return to the proof of our proposition. As $T~\operatorname{mod}~\mathfrak{m}_R$ is reducible, we know that for all $\alpha, \beta \in G$, we have $y(\alpha, \beta) \in \mathfrak{m}_R$.
	
	1) By Lemma \ref{commutative algebra}, we know that there does not exist a prime of $B$ containing $\{y(\alpha, \beta), \beta \in \operatorname{Gal}(F_\Sigma/F)\}$ of height at most $1$. 
	As $B$ is normal , we have $\cap_{\mathfrak{p}_{B}} B_{\mathfrak{p}_{B}}=B $, where $ \mathfrak{p}_{B}$ ranges over all height one primes of $B$. To prove our result, for any height one prime $ \mathfrak{p}_{B}$, we just need to find an elemnt $\beta \in G$ satisfying $\frac{y(\sigma, \beta)}{y(\alpha, \beta)} \in B_{\mathfrak{p}_{B}}$, and it is immediate from our discussion. This proves 1).
	
	2) By 1), we know that both $u(\alpha', \alpha)$ and $u(\alpha, \alpha')$ are well-defined and integral over $R$. As $u(\alpha', \alpha)u(\alpha, \alpha')=1 $, we know that $u(\alpha, \alpha')$ is a unit in $B$. 
\end{proof}

\begin{cor}\label{innovation'}
	Keep notations as in the statement of Proposition \ref{innovation}. Let $S$ be a finite subset (not necessarily a group) of $G$ . Then there exist a partition of $S = S_1 \amalg S_2$, a positive integer $n>1$ satisfying $(n, p)=1$ and a CNL domain $R'$ satisfying the following conditions.
	
	a) $R'$ is a quotient of $R$.
	
	b) For any $\theta \in S_1$, we have $y(\theta, \alpha)=0$ for any $\alpha \in G$ in $R'$. For any $\theta', \theta'' \in S_2$, we have $y(\theta', \alpha)^n=y(\theta'', \alpha)^n$ for any $\alpha \in G$ in $R'$.
	
	c) For any $\alpha \in G,~\theta \in S_2$, either $ y(\theta, \beta)=0$ for any $\beta \in G$ in $R'$ or $u(\alpha, \theta)$ is well-defined and integral over $R'$.
	
	d) We have $\operatorname{dim} R' \ge \operatorname{dim} R - |S|$. If $S_2$ is not empty, then further $\operatorname{dim} R' \ge \operatorname{dim} R - |S|+1$.
	
\end{cor}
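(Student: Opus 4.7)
The plan is to build $R'$, the partition $S = S_1 \amalg S_2$, and $n$ in two stages. In Stage 1 I populate $S_1$ by successive quotients. Set $R^{(0)} := R$ and $S_1 := \varnothing$. While some $\theta \in S \setminus S_1$ has $I_\theta$, taken in the current $R^{(i)}$, containing a minimal prime $\mathfrak{p}$ of height $\le 1$, replace $R^{(i)}$ by $R^{(i+1)} := R^{(i)}/\mathfrak{p}$ (a CNL domain of dimension at least $\dim R^{(i)} - 1$) and move $\theta$ into $S_1$; in $R^{(i+1)}$ one has $y(\theta, \beta) = 0$ for every $\beta$. The process terminates in at most $|S|$ steps, producing a CNL domain $R_N$ with $\dim R_N \ge \dim R - |S_1|$, and the residual set $S_{2,\mathrm{cand}} := S \setminus S_1$ has the property that every $\theta \in S_{2,\mathrm{cand}}$ satisfies the hypothesis of Proposition \ref{innovation} in $R_N$: all minimal primes of $I_\theta$ have height $\ge 2$. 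If $S_{2,\mathrm{cand}} = \varnothing$, take $R' := R_N$, $S_2 := \varnothing$, and any $n > 1$ coprime to $p$.

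For Stage 2, assume $S_{2,\mathrm{cand}} \ne \varnothing$, fix $\theta_0 \in S_{2,\mathrm{cand}}$, and work in $B_N$, the normal closure of $R_N$. For each $\theta \in S_{2,\mathrm{cand}}$, Proposition \ref{innovation}(1) places $u_\theta := u(\theta, \theta_0)$ in $B_N$, and Proposition \ref{innovation}(2) (applied to the pair $\theta, \theta_0$, both of which satisfy the height hypothesis) shows $u_\theta$ is a unit in $B_N$. Its residue lies in the finite residue field $\mathbb{F}'$ of $B_N$, so take $n := |\mathbb{F}'^\times|$; then $n > 1$, $(n,p) = 1$, and $u_\theta^n - 1 \in \mathfrak{m}_{B_N}$ for every $\theta$. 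Let $K \subseteq B_N$ be the ideal generated by the $|S_{2,\mathrm{cand}}| - 1$ elements $u_\theta^n - 1$ with $\theta \ne \theta_0$. By Krull's height theorem, any minimal prime $\mathfrak{P}$ over $K$ has height at most $|S_{2,\mathrm{cand}}| - 1$, and setting $\mathfrak{p} := \mathfrak{P} \cap R_N$, Lemma \ref{commutative algebra} ensures $\mathfrak{p}$ is a prime of $R_N$ of the same height. Define $R' := R_N/\mathfrak{p}$ and $S_2 := S_{2,\mathrm{cand}}$; then $\dim R' \ge \dim R_N - (|S_{2,\mathrm{cand}}| - 1) \ge \dim R - |S| + 1$.

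Conditions (a) and (d) are then immediate. For (b): for each $\theta \in S_2$, the identity $y(\theta, \alpha) = u_\theta \cdot y(\theta_0, \alpha)$ in $B_N$ yields $y(\theta, \alpha)^n - y(\theta_0, \alpha)^n = (u_\theta^n - 1)\, y(\theta_0, \alpha)^n \in K \subseteq \mathfrak{P}$; since the left side lies in $R_N$, it lies in $\mathfrak{P} \cap R_N = \mathfrak{p}$ and vanishes in $R'$. For (c): suppose $y(\theta, \beta) \ne 0$ in $R'$ for some $\theta \in S_2$ and some $\beta$; then some $y(\theta_0, \beta') \ne 0$ in $R'$ (else (b) forces $y(\theta, \cdot)^n = 0$, hence $y(\theta, \cdot) = 0$ by domain-ness), so $u(\alpha, \theta_0)$ is well-defined in $\operatorname{Frac}(R')$ and integral over $R_N$ by Proposition \ref{innovation}(1) applied in $R_N$; then, using $u_\theta^n = 1$ in $B_N/\mathfrak{P}$, the identity $u(\alpha, \theta) = u(\alpha, \theta_0) \cdot u_\theta^{n-1}$ exhibits $u(\alpha, \theta)$ as a product of elements integral over $R'$.

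The main obstacle is arranging a dimension loss of only $|S_2| - 1$ rather than $|S_2|$ in Stage 2, which is what yields the ``$+1$'' in condition (d). This is handled by collecting all the relations $u_\theta^n = 1$ into a single ideal $K$ inside the normal closure $B_N$ (where Krull's theorem bounds heights by the number of generators and the unit property from Proposition \ref{innovation}(2) guarantees $K$ is proper) and transferring the height bound back to $R_N$ via Lemma \ref{commutative algebra}; a naive sequential approach in $R_N$ alone would instead lose $|S_2|$ dimensions.
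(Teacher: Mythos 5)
Your proposal is correct and follows essentially the same route as the paper: the paper's induction on $|S|$ unrolls to exactly your two stages (peeling elements into $S_1$ via height-$\le 1$ minimal primes of $I_\theta$, then passing to the normal closure, taking $n=|\mathbb{F}'^\times|$, and cutting by the ideal generated by the $|S_2|-1$ elements $u_\theta^n-1$ with the height bound transferred back via Lemma \ref{commutative algebra}). Your verification of conditions b) and c) is more explicit than the paper's, which simply asserts that they follow from the construction together with Propositions \ref{u} and \ref{innovation}.
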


\begin{proof}
	We prove this result by induction on $|S|$.
	
	If $|S|=\varnothing$, then our result is just trivial.
	
	Now we assume that our result holds for a non-negative integer $k$ and $|S|=k+1$.
	
	First, we check whether there exists an element $\theta \in S$ such that there exists a minimal prime belonging to the ideal $I_{\theta}$ of $R$ generated by $\{y(\theta, \beta), \beta \in G\}$ is of height at most $1$. Recall that $B$ is the normal closure of $R$.
	
	If not, we can take $S_2=S$ and $n= |B/\mathfrak{m}_B|-1 $ (clearly prime to $p$). Now we fix an element $\theta_1 \in S_2$. For any $\theta' \in S_2$, by Proposition \ref{innovation},  $u(\theta_1, \theta')$ is well-defined and integral over $R$, and hence a unit in $B$. Furthermore, $u(\theta_1, \theta')^n -1$ is an element of the maximal ideal of $B$. Consider the ideal $\mathfrak{b}$ of $B$ generated by $\{u(\theta_1, \theta')^n -1, \theta' \in S_2\}$. By Krull's principal ideal theorem, we can find a prime $\mathfrak{p}$ belonging to $\mathfrak{b}$ of height at most $|S_2|-1$. By Lemma \ref{commutative algebra}, its inverse image $\mathfrak{p}'$ in $R$ is also of height at most $|S_2|-1$. Then we can take $R'=R/\mathfrak{p}'$. The four conditions follow from our construction, Proposition \ref{u} and Proposition \ref{innovation}.
	
	If so, we can choose such an element $\theta \in S$ and consider a prime $\mathfrak{p}_\theta$ (of $R$) of $I_\theta$ of height at most $1$. Take $R_1=R/\mathfrak{p}_\theta$. Then we have $y(\theta, \alpha)=0$ for any $\alpha \in G$ and $\operatorname{dim} R_1 \ge \operatorname{dim} R - 1$.  Now we can use our induction hypothesis for the CNL domain $ R_1$ and the finite set $S\setminus \{\theta\}$. Then we get our desired result.
\end{proof}

\begin{rem}
	This result is also used in \cite{ZHANG2024}. See \cite[Lemma 4.2.5 \& 5.1.2]{ZHANG2024}.
\end{rem}

\section{Galois deformation theory}
In this section, we will study some properties of 2-dimensional universal deformation rings and pseudo-deformation rings.

\subsection{Universal deformation rings}	
In this subsection, we study some properties of universal deformation rings related to $ \rho_x$.\\

Let $F$ be a totally real field. Let $p>3$ be a rational prime and $\mathbb{F}$ be a finite field of characteristic $p$. Let $\Sigma$ be a finite set of finite places of $F$ containing all the places $v \mid p$. Recall that $\mathcal{O}$ is the ring of integers of some local field $E$ (a finite extension of $\mathbb{Q}_p$) with a uniformizer $\pi$ and residue field $\mathbb{F}$. Let $\chi: \operatorname{Gal}(\bar{F}/F) \to \mathcal{O}^\times$ be a continuous totally odd character unramified outside $\Sigma$, and $\bar{\chi}=\chi ~\operatorname{mod} ~\pi$. Let $F_{\Sigma}$ be the maximal extension of $F$ unramified outside $\Sigma$ and all infinite places. We denote $\bar{\rho}_0$ by the representation $1 \oplus \bar{\chi}$.

For an element $0 \ne x \in H^1(F_{\Sigma}/F, \mathbb{F}(\bar{\chi}^{-1}))$, there is a unique representation$$
\rho_x: \operatorname{Gal}(F_{\Sigma}/F) \to \operatorname{GL}_2 (\mathbb{F}), ~~~~~~\rho_x=
\begin{pmatrix}
	1 &  *\\
	~& \bar{\chi}
\end{pmatrix}$$
such that $\rho_x(c)=
\begin{pmatrix}
	1 & ~\\
	~& -1
\end{pmatrix}$ for a complex conjugation $c$, where $*$ corresponds to $x$.

Let $R_x$ be the universal deformation ring in the sense of Mazur \cite{mazur1989deforming} of $ \rho_x$
and $\rho_x^\textnormal{univ}: \operatorname{Gal}(F_{\Sigma}/F) \to \operatorname{GL}_2 (R_x)$ the universal deformation of $ \rho_x$.
Following \cite[Proposition 2.4]{skinner1999residually}, we consider the deformation problem of auxiliary type. 

\begin{defn}\label{auxiliary}
	A deformation $\rho: \operatorname{Gal}(\bar{F}/F) \to \operatorname{GL}_2(A)$ of $ \rho_x$ is of auxiliary type if
	
	1) $A$ is an $\mathcal{O}$-algebra,
	
	2) $\operatorname{det} \rho = \chi$,
	
	3) $\rho$ factors through $\operatorname{Gal}(F_{\Sigma}/F)$.
	\\
	We denote the universal deformation of auxiliary type by $
	\rho_x^\textnormal{aux}: \operatorname{Gal}(F_{\Sigma}/F) \to \operatorname{GL}_2(R_x^\textnormal{aux}).$
\end{defn}

\begin{prop} \label{dimension}  The ring  $R_x^\textnormal{aux}$ is isomorphic to $\mathcal{O}[[x_1,\cdots, x_g]]/(f_1,\cdots, f_r)$, where $g= \operatorname{dim}_{\mathbb{F}} (H^1(F_{\Sigma}/F, \operatorname{ad}^0 \rho_x))$, $ r \le \operatorname{dim}_{\mathbb{F}} (H^2(F_{\Sigma}/F, \operatorname{ad}^0 \rho_x))$.

\end{prop}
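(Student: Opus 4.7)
\medskip

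\noindent\textbf{Proof proposal for Proposition \ref{dimension}.} The plan is to apply the standard Mazur--Schlessinger presentation theorem to the auxiliary deformation functor, with trace‑zero adjoint cohomology in place of the full adjoint cohomology because we fix the determinant.

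First I would verify pro-representability. Since $\rho_x$ is a non-split reducible representation, any endomorphism commuting with $\rho_x(G_{F,\Sigma})$ preserves the unique invariant line $1 \subset \rho_x$, and then the quotient action by $\bar\chi\ne 1$ forces such an endomorphism to act as the same scalar on both Jordan--Hölder factors; hence $\operatorname{End}_{G_{F,\Sigma}}(\rho_x)=\mathbb F$. By Mazur's theorem, the auxiliary deformation functor is pro-represented by a CNL $\mathcal O$-algebra $R_x^{\textnormal{aux}}$. Because $p>2$, the adjoint representation decomposes as $\operatorname{ad}\rho_x=\operatorname{ad}^0\rho_x\oplus\mathbb F$ (the scalars), and imposing the determinant condition $\det\rho=\chi$ precisely removes the scalar summand; the relevant cohomology groups therefore involve $\operatorname{ad}^0\rho_x$.

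Next I would compute the tangent and obstruction spaces. A standard cocycle calculation identifies the Zariski tangent space of $R_x^{\textnormal{aux}}$, namely $\operatorname{Hom}_{\mathcal O}(\mathfrak m_{R_x^{\textnormal{aux}}}/(\mathfrak m_{R_x^{\textnormal{aux}}}^2,\pi),\mathbb F)$, with $H^1(F_\Sigma/F,\operatorname{ad}^0\rho_x)$: a deformation of $\rho_x$ to $\mathbb F[\varepsilon]/(\varepsilon^2)$ with determinant $\chi$ has the form $(1+\varepsilon\phi)\rho_x$ with $\phi$ a $1$-cocycle valued in $\operatorname{ad}^0\rho_x$, and strict equivalences correspond to coboundaries. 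This gives the minimal number of topological generators of $R_x^{\textnormal{aux}}$ over $\mathcal O$ as $g=\dim_{\mathbb F}H^1(F_\Sigma/F,\operatorname{ad}^0\rho_x)$, yielding a surjection
\[
\pi:\mathcal O[[x_1,\dots,x_g]]\longrightarrow R_x^{\textnormal{aux}}.
\]
For the obstruction theory, given a small extension $0\to I\to A'\to A\to 0$ in $\mathscr C_{\mathcal O}^f$ with $\mathfrak m_{A'}\cdot I=0$, and a deformation $\rho_A$ to $A$ with $\det\rho_A=\chi$, one chooses a set-theoretic lift to $A'$, measures its failure to be a homomorphism by a $2$-cocycle in $I\otimes_{\mathbb F}\operatorname{ad}^0\rho_x$ (the trace part is killed by the determinant condition since the mod-$\pi$ reduction of $\chi$ already lifts to $A'$), and observes that this class obstructs lifting. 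Thus obstructions lie in $H^2(F_\Sigma/F,\operatorname{ad}^0\rho_x)\otimes_{\mathbb F}I$.

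Finally I would combine these two inputs to bound the number of relations. Setting $J=\ker\pi$, I would show by the usual argument (lifting a hypothetical relation across a small extension $\mathcal O[[x_1,\dots,x_g]]/(\mathfrak m\cdot J)\twoheadrightarrow\mathcal O[[x_1,\dots,x_g]]/J=R_x^{\textnormal{aux}}$ and pairing against a linear functional on $J/(\mathfrak m_{\mathcal O[[x_i]]}\cdot J,\pi J)$) that there is an injection
\[
\operatorname{Hom}_{\mathbb F}\!\bigl(J/(\mathfrak m\cdot J,\pi J),\,\mathbb F\bigr)\hookrightarrow H^2(F_\Sigma/F,\operatorname{ad}^0\rho_x).
\]
Hence $\dim_{\mathbb F} J/(\mathfrak m J,\pi J)\le r:=\dim_{\mathbb F}H^2(F_\Sigma/F,\operatorname{ad}^0\rho_x)$, and Nakayama's lemma furnishes elements $f_1,\dots,f_r\in J$ with $(f_1,\dots,f_r)=J$, giving the desired presentation.

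The main technical obstacle is the obstruction-theoretic injection of $\operatorname{Hom}(J/\mathfrak m J,\mathbb F)$ into $H^2$; this is the classical Mazur argument, but care is required to ensure that the fixed determinant condition exactly cuts out the trace-zero part both for tangent vectors and for obstruction classes. The pro-representability and the cohomological interpretation of the tangent space are entirely standard once the centralizer of $\rho_x$ is seen to be $\mathbb F$.
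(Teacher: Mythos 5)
Your proposal is correct and is essentially the paper's own argument: the paper simply cites Mazur's presentation theorem (\cite[Proposition 2]{mazur1989deforming}), and what you have written is a careful unwinding of exactly that result in the fixed-determinant setting, with the standard verifications (centralizer equal to $\mathbb F$ using that $\bar\chi\ne 1$, which here follows from total oddness and $p>2$; tangent space and obstructions in $\operatorname{ad}^0\rho_x$ since $p>2$ splits off the scalars and $\chi$ itself lifts). No gaps.
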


\begin{proof}
	The assertion is from \cite[Proposition 2]{mazur1989deforming}. 
\end{proof}
 
 \begin{prop}\label{raux}
 	The dimension of every irreducible component of $R_x^\textnormal{aux} $ is at least $2[F: \mathbb{Q}]+1$.
 \end{prop}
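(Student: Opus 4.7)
The plan is to combine the presentation of $R_x^{\textnormal{aux}}$ from Proposition \ref{dimension} with the global Euler characteristic formula. By Proposition \ref{dimension}, we may write
\[
R_x^{\textnormal{aux}} \cong \mathcal{O}[[x_1,\dots,x_g]]/(f_1,\dots,f_r),
\]
with $g = \dim_{\mathbb{F}} H^1(F_\Sigma/F, \operatorname{ad}^0 \rho_x)$ and $r \le \dim_{\mathbb{F}} H^2(F_\Sigma/F, \operatorname{ad}^0 \rho_x)$. Since $\mathcal{O}[[x_1,\dots,x_g]]$ is a regular local ring of Krull dimension $1+g$, Krull's Hauptidealsatz implies that every minimal prime over $(f_1,\dots,f_r)$ has height at most $r$, so every irreducible component of $R_x^{\textnormal{aux}}$ has dimension at least
\[
1 + g - r \;\ge\; 1 + \dim_{\mathbb{F}} H^1(F_\Sigma/F,\operatorname{ad}^0\rho_x) - \dim_{\mathbb{F}} H^2(F_\Sigma/F,\operatorname{ad}^0\rho_x).
\]
It therefore suffices to show $\dim H^1 - \dim H^2 \ge 2[F:\mathbb{Q}]$.

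For this I will invoke Tate's global Euler--Poincar\'e formula applied to the finite $\operatorname{Gal}(F_\Sigma/F)$-module $\operatorname{ad}^0 \rho_x$ (of $\mathbb{F}$-dimension $3$), which gives
\[
\dim H^0 - \dim H^1 + \dim H^2 \;=\; \sum_{v\mid\infty} \dim_{\mathbb{F}} H^0(G_{F_v}, \operatorname{ad}^0\rho_x) \;-\; [F:\mathbb{Q}]\cdot 3.
\]
Since $F$ is totally real and $\chi$ is totally odd, for each infinite place $v$ a complex conjugation $c_v$ acts on $\rho_x$ (after conjugation) as $\operatorname{diag}(1,-1)$; its adjoint action on $\operatorname{ad}^0$ preserves the traceless diagonal line and negates the two off-diagonal lines, so
\[
\dim_{\mathbb{F}} H^0(G_{F_v}, \operatorname{ad}^0\rho_x) = 1.
\]
Summing over the $[F:\mathbb{Q}]$ real places yields
\[
\dim H^1 - \dim H^2 \;=\; \dim H^0 - [F:\mathbb{Q}] + 3[F:\mathbb{Q}] \;=\; \dim H^0 + 2[F:\mathbb{Q}] \;\ge\; 2[F:\mathbb{Q}],
\]
which, combined with the bound from the first paragraph, gives $\dim R_x^{\textnormal{aux}} \ge 1 + 2[F:\mathbb{Q}]$ on every irreducible component.

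No serious obstacle is expected: the argument is essentially the standard ``Mazur lower bound.'' The only point worth double-checking is the archimedean computation, i.e.\ that the oddness of $\chi$ (hence of $\rho_x$) together with total realness of $F$ gives exactly the contribution $[F:\mathbb{Q}]$, which is what cancels one copy of $[F:\mathbb{Q}]\cdot \dim \operatorname{ad}^0$ and leaves the required $2[F:\mathbb{Q}]$. The hypotheses of the global Euler characteristic formula are satisfied because $\Sigma \supseteq \Sigma_p$ and $\operatorname{ad}^0\rho_x$ is unramified outside $\Sigma$ by construction.
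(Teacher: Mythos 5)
Your proposal is correct and fills in exactly the argument the paper intends: the paper's proof is a one-line citation of Proposition \ref{dimension}, Krull's principal ideal theorem, and the global (Euler) characteristic formula, which is precisely the chain of reasoning you carry out. The details — the bound $1+g-r$ from the presentation, the archimedean computation $\dim_{\mathbb{F}} H^0(G_{F_v},\operatorname{ad}^0\rho_x)=1$ using $\rho_x(c)=\operatorname{diag}(1,-1)$ and $p>2$, and the resulting inequality $g-r\ge 2[F:\mathbb{Q}]$ — all check out.
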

 
 \begin{proof}
 	This is a direct consequence of Proposition \ref{dimension}, Krull's principal ideal theorem and the global characteristic formula.
 \end{proof}
 
 Let $\rho_x^\textnormal{aux,red}: \operatorname{Gal}(F_{\Sigma}/F) \to \operatorname{GL}_2(R_x^\textnormal{aux,red})$ be the universal reducible deformation of $\rho_x$ of auxiliary type. Clearly, $R_x^\textnormal{aux,red} $ is a quotient of $R_x^\textnormal{aux} $.
 
 \begin{prop} \label{red} Let $\delta_F$ be the Leopoldt defect (see \cite[10.3.7]{neukirch2013cohomology}. Then we have $\operatorname{dim} (R_x^\textnormal{aux,red}) \le 1+\delta_F +\operatorname{dim}_{\mathbb{F}}(H^1(F_{\Sigma}/F, \mathbb{F}(\bar{\chi}^{-1})))$.
 	
 \end{prop}
 
 \begin{proof}
 	The proof is an analogue to \cite[Section 2.2]{skinner1999residually}. We sketch a proof here. 
 	
 	Let $L(\Sigma)$ is the maximal abelian pro-$p$-extension of $F$ unramified outside $\Sigma$, and write $G=\operatorname{Gal}(L(\Sigma)(\bar{\chi})/F) \simeq \Delta \times \Gamma \times \mathbb{Z}_p^{\delta_F+1}$, where $\Delta = \operatorname{Gal}(F(\bar{\chi})/F))$, $\Gamma$ is a finite $p$-group, $L(\Sigma)(\bar{\chi}) = L(\Sigma)\cdot F(\bar{\chi}) $, and $F(\bar{\chi}) $ is the splitting field. Let $M$ be the maximal abelian pro-$p$-extension of $L(\Sigma)(\bar{\chi})$ unramified outside $\Sigma$ such that $\Delta$ acts on $H=\operatorname{Gal}(M/L(\Sigma)(\bar{\chi}))$ via $\bar{\chi}^{-1}$. Then any reducible deformation of auxiliary type factors through $\operatorname{Gal}(M/F)$.
 	
 	Put $A= \mathbb{Z}_p[[G]]$. By Nakayama's lemma, the group $H$ is a finitely generated $A$-module generated by $m:=\operatorname{dim}_{\mathbb{F}}(H^1(F_{\Sigma}/F, \mathbb{F}(\bar{\chi}^{-1}))) $ elements. Fix a presentation $\mathfrak{a} \to \bigoplus_{i=1}^m Ae_i \twoheadrightarrow H$ such that $e_m$ projects to $h_m \in H $ for which $\rho_x(h_m)=\begin{pmatrix}
 		1 & u\\
 		~ & 1
 	\end{pmatrix}$ with $u \ne 0$ and such that if $i \ne m$ then $e_i$ projects to $h_i \in H $ for which $\rho_x(h_i)=\begin{pmatrix}
 	1 & ~\\
 	~ & 1
 	\end{pmatrix}$. Choose a $u_0 \in \mathcal{O}^\times$ reducing to $u$. Put $A_1 = \mathcal{O}[[\Gamma]][[T_1, \cdots, T_{\delta_F+1}]]$ and fix an embedding $\phi:A \hookrightarrow A \otimes_{\mathbb{Z}_p[\Delta]} \mathcal{O} \simeq A_1$ where the map $\mathbb{Z}_p [\Delta]\to \mathcal{O} $
 	is via $\bar{\chi}$. Let $J$ be the ideal of $A_1[[x_1,\cdots, x_{m-1}]]$ generated by $$
 	\{\phi(a_1)x_1+\cdots+\phi(a_{m-1})x_{m-1}+\phi(a_m)u_0: \Sigma a_ie_i \in \mathfrak{a}\}.$$ 
 	
 	Let $B=A_1[[x_1,\cdots, x_{m-1}]]/J$. Define $\rho: \operatorname{Gal}(M/F) \to \operatorname{GL}_2(B)$ by$$
 	\rho (g)=\begin{pmatrix}
 		\chi(g)\cdot \phi (g)^{-1} & ~\\
 		~ & \phi(g)
 	\end{pmatrix}, ~~~~~~~g \in G, $$
 	$$\rho(h) =\begin{pmatrix}
 	1 & \tau(h)\\
 	~ & 1
 	\end{pmatrix}, \tau(h_i)=e_i, ~ i=1,\cdots,m-1, ~\tau(h_m)=u_0.
 	$$
 	 
 	 By the universal property, we know that $B \cong R_x^\textnormal{aux,red}$. Hence, we have $\operatorname{dim} (R_x^\textnormal{aux,red}) \le 1+\delta_F +1+m-1 =1+\delta_F +\operatorname{dim}_{\mathbb{F}}(H^1(F_{\Sigma}/F, \mathbb{F}(\bar{\chi}^{-1})))$.
 	 
 	 
 	 
 \end{proof}

 \begin{prop} \label{char}
 	Assume that for all $v \in \Sigma \setminus \Sigma_p$, we have $p \mid \operatorname{Nm}(v)-1$, and  $\chi$ is unramified outside $ \Sigma_p$.
 	Let $R$ be a CNL domain with maximal ideal $\mathfrak{m}_R$, fraction field $K$ and residue field $\mathbb{F}$. Suppose that $\rho: \operatorname{Gal}(F_{\Sigma}/F) \to \operatorname{GL}_2(R)$ is a deformation of $\rho_x$ of auxiliary type. Then for any $v \in \Sigma \setminus \Sigma_p$, there exists a finite extension $K'/K$ and a finite character $\xi_v: I_v \to K'^{\times}$ of $p$-power order such that $\operatorname{tr}(\rho) \mid_{I_v}= \xi_v+\xi_v^{-1}$.
 \end{prop}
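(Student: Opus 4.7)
The plan is to show first that wild inertia acts trivially, then to diagonalize the image of a tame generator, and finally use the Frobenius commutation relation to force the eigenvalues to be $p$-power roots of unity.

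First I would verify that $\rho$ kills the wild inertia $P_v \subseteq I_v$. Since $\bar{\chi}|_{I_v}$ is trivial, $\rho_x|_{I_v}$ is unipotent, and its ramification is encoded in an additive homomorphism $I_v \to \mathbb{F}$; as $P_v$ is pro-$\ell$ with $\ell \ne p$ (the residue characteristic of $v$), this homomorphism vanishes on $P_v$, so $\rho_x|_{P_v} = 1$. Hence $\rho(P_v) \subseteq 1 + M_2(\mathfrak{m}_R)$, which is pro-$p$ since $R$ is a CNL $\mathcal{O}$-algebra with residue field of characteristic $p$, and a continuous map from a pro-$\ell$ group to a pro-$p$ group is trivial. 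Thus $\rho|_{I_v}$ factors through the tame quotient, topologically generated by some $\tau$ satisfying $\operatorname{Frob}_v \tau \operatorname{Frob}_v^{-1} = \tau^{q_v}$. Set $M = \rho(\tau)$; since $\chi|_{I_v} = 1$ we have $\det M = 1$, so $M$ satisfies $X^2 - tX + 1 = 0$ with $t = \operatorname{tr}(M) \in R$, and its eigenvalues $\alpha, \alpha^{-1}$ live in the at-most-quadratic extension $K'/K$ obtained by adjoining $\sqrt{t^2 - 4}$.

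The conjugation relation $\rho(\operatorname{Frob}_v) M \rho(\operatorname{Frob}_v)^{-1} = M^{q_v}$ forces $M$ and $M^{q_v}$ to share a characteristic polynomial, giving $\{\alpha, \alpha^{-1}\} = \{\alpha^{q_v}, \alpha^{-q_v}\}$ and hence $\alpha^{q_v - 1} = 1$ or $\alpha^{q_v + 1} = 1$. To sharpen this to a $p$-power order statement, I would analyze the residual picture: $M$ reduces modulo $\mathfrak{m}_R$ to $\rho_x(\tau)$, which is unipotent with both eigenvalues equal to $1$, so $X^2 - tX + 1$ reduces to $(X-1)^2$. Consequently, the integral closure of $R$ in $K'$ is a complete local ring with residue field $\mathbb{F}$ in which $\alpha$ lies in the pro-$p$ group $1 + \mathfrak{m}$ of principal units. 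Because $p > 2$ and $p \mid q_v - 1$, we have $\gcd(p, q_v + 1) = 1$, so $\alpha^{q_v+1} = 1$ forces $\alpha = 1$; in the remaining case $\alpha^{q_v-1} = 1$ the pro-$p$ condition forces $\alpha$ to have $p$-power order. Either way, $\alpha$ is a $p$-power root of unity in $K'$.

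Finally I would define $\xi_v : I_v \to K'^\times$ to be the unique continuous character that is trivial on $P_v$ and sends $\tau$ to $\alpha$; well-definedness and continuity are automatic from the $p$-power order of $\alpha$. Since $\operatorname{tr}(\rho(\tau^n)) = \alpha^n + \alpha^{-n} = \xi_v(\tau^n) + \xi_v^{-1}(\tau^n)$ and $\rho|_{I_v}$ factors through the closure of $\langle \tau \rangle$, this yields $\operatorname{tr}(\rho)|_{I_v} = \xi_v + \xi_v^{-1}$. The main obstacle I expect is the residual/pro-$p$ verification: one must carefully check that the integral closure $R[\alpha]$ is a complete local ring with residue field $\mathbb{F}$, so that $\alpha$ genuinely lies in a pro-$p$ group, and then combine this with the alternative divisibilities $p \mid q_v-1$ versus $\gcd(p, q_v+1) = 1$. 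The other ingredients — wild inertia vanishing, Cayley--Hamilton, and the tame commutation relation — are standard.
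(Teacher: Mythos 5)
Your argument is correct and takes essentially the same route as the paper's own sketch (modeled on Pan's Corollary 5.7.2): kill wild inertia, reduce to a topological generator of the tame quotient with eigenvalues $\alpha,\alpha^{-1}$, use the relation $\operatorname{Frob}_v\sigma_v\operatorname{Frob}_v^{-1}=\sigma_v^{\operatorname{Nm}(v)}$ to get $\alpha^{\operatorname{Nm}(v)^2-1}=1$, and conclude from $\alpha\equiv 1$ modulo the maximal ideal that $\alpha$ has $p$-power order. Your additional care in checking that $\alpha$ lies in the pro-$p$ group of principal units of a complete local ring of residue characteristic $p$ (whose residue field may a priori be a finite extension of $\mathbb{F}$, which changes nothing) simply fills in the step the paper leaves implicit.
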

 
 \begin{proof}
 	It is an analogue to \cite[Corollary 5.7.2]{pan2022fontaine}. We sketch a proof here.
 	
 	As $v \nmid p$, we know that for any $\sigma \in I_v$, $\rho(\sigma) $ is of pro-$p$-order. From the structure of Galois group of the maximal tamely ramified extension \cite[Proposition 7.5.2]{neukirch2013cohomology}, we know that $\rho \mid_{I_v}$ only depends on the $\mathbb{Z}_p$-quotient of $I_v $, and we denote the generator by $\sigma_v$.
 	
 	As $\rho$ is of auxiliary type, we just need to show that the eigenvalues $\alpha, \alpha^{-1}$ of $\rho(\sigma_v)$ are of $p$-power order. By \cite[Theorem 7.5.3]{neukirch2013cohomology}, there exists an element in $\tau \in G_{F_v} \setminus I_v$ satisfying $\tau \sigma_v \tau^{-1}=\sigma_v^{\operatorname{Nm}(v)}$. Thus, we have $\alpha^{(\operatorname{Nm}(v)^2-1)}=1$. Since $\alpha \equiv 1 ~\operatorname{mod}~\mathfrak{m}_R$, we know that $\alpha$ is of $p$-power order.
 \end{proof}
 
 Now we suppose that $\mathcal{O}$ has enough $p$-power roots of unity with residue field $\mathbb{F}$. Let $\xi_v: k(v)^\times \to \mathcal{O}^\times$ be characters of $p$-power order for $v \in \Sigma \setminus \Sigma_p$, and we can view them as characters of $I_v$ by local class field theory.
 
 Now consider the deformation $\rho$ of $\rho_x$ such that 
 
 1) it is of auxiliary type,
 
 2) it is unramified outside $\Sigma$,
 
 3) $\operatorname{tr}(\rho) \mid_{I_v}= \xi_v+\xi_v^{-1}$ for all $v \in \Sigma \setminus \Sigma_p$.
 \\
 Let $\rho_x^{\textnormal{aux}, \{\xi_v\}}: \operatorname{Gal}(F_{\Sigma}/F) \to \operatorname{GL}_2(R_x^{\textnormal{aux}, \{\xi_v\}})$ be the universal deformation of such a deformation problem. Obviously, $ R_x^{\textnormal{aux}, \{\xi_v\}}$ is a quotient of $R_x^\textnormal{aux}$. 
 
 By Proposition \ref{char}, we know that for any minimal prime $\mathfrak{P}$ of $R_x^\textnormal{aux}$, the natural surjection $ R_x^\textnormal{aux} \to R_x^\textnormal{aux}/ \mathfrak{P}$ factors through some $ R_x^{\textnormal{aux}, \{\xi_v\}}$. We say that $ \mathfrak{P}$ \textit{belongs to} $\{\xi_v\}$ in this situation.

 \subsection{Universal pseudo-deformation ring} \label{mis} In this subsection, we collect some results which will be useful for our pro-modularity arguments.
 
 Keep the notations and assumptions in the previous subsection. Write $\Sigma_p \subset \Sigma$ as the set of places of $F$ lying above $p$. We further assume that for all $v \in \Sigma \setminus \Sigma_p$, we have $p \mid \operatorname{Nm}(v)-1$, and that $\chi: \operatorname{Gal}(F_{\Sigma}/F) \to \mathcal{O}^{\times}$ such that 
 
 1) $\chi$ is unramified outside $ \Sigma_p$,
 
 2) $\chi(\operatorname{Frob}_v) \equiv 1~\operatorname{mod}~\pi$ for  
 $v \in \Sigma \setminus \Sigma_p$.
 
Let $R^\textnormal{ps}$ be the universal pseudo-deformation ring of $\operatorname{tr}(\bar{\rho}_0)= 1+\bar{\chi}$, and $T^\textnormal{univ}$ the universal deformation. The existence of $(R^\textnormal{ps}, T^\textnormal{univ})$ is proved in \cite[Section 2.4]{skinner1999residually}.  Let $ (R_\textnormal{aux}^{\textnormal{ps}}, T_\textnormal{aux}) $ be the universal pseudo-deformation of $\operatorname{tr}(\bar{\rho}_0)$ of auxiliary type (i.e., with $ \operatorname{det} = \chi $)  and $(R^{\textnormal{ps}, \{\xi_v\}}, T^{\textnormal{ps}, \{\xi_v\}})$ the universal pseudo-deformation of $\operatorname{tr}(\bar{\rho}_0)$ of auxiliary type satifying $T_\textnormal{aux}\mid_{I_v}= \xi_v+\xi_v^{-1} $ for all $v \in \Sigma \setminus \Sigma_p$. Clearly, there are natural surjections $R^\textnormal{ps} \twoheadrightarrow R_\textnormal{aux}^\textnormal{ps} \twoheadrightarrow R^{\textnormal{ps}, \{\xi_v\}}$. By \cite[Corollary 5.7.2]{pan2022fontaine}, for any minimal prime $\mathfrak{P}$ of $R_\textnormal{aux}^\textnormal{ps}$, the natural surjection $ R_\textnormal{aux}^\textnormal{ps} \to R_\textnormal{aux}^\textnormal{ps}/ \mathfrak{P}$ factors through some $ R^{\textnormal{ps}, \{\xi_v\}}$. Similarly, we also say that $ \mathfrak{P}$ \textit{belongs to} $\{\xi_v\}$ in this situation.

From our definitions, we can get the following results immediately.

\begin{prop}\label{RX1}
	1) $R_x^{\textnormal{aux}, \{\xi_v\}}/\pi=R_x^{\textnormal{aux}, \{\xi'_v\}}/\pi$.
	
	2) $R^{\textnormal{ps}, \{\xi_v\}}/\pi= R^{\textnormal{ps}, \{\xi'_v\}}/\pi$. 
\end{prop}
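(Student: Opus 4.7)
The plan is to show that each pair of rings pro-represents the same functor after quotienting by $\pi$, whereupon the universal property forces them to be canonically isomorphic. The entire argument is essentially an unwinding of definitions; the only substantive input is that any $p$-power-order character into $\mathcal{O}^\times$ is congruent to $1$ modulo $\pi$.

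First I would record this observation precisely: the decomposition $\mathcal{O}^\times = \mathbb{F}^\times \times (1 + \pi\mathcal{O})$ shows that any character $\xi_v : I_v \to \mathcal{O}^\times$ of $p$-power order (with $p > 3$) must land in the pro-$p$ part $1 + \pi\mathcal{O}$. Hence for any $A$ in $\mathscr{C}_\mathcal{O}^f$ on which $\pi$ acts as zero --- equivalently, for any $A$ that is an $\mathbb{F}$-algebra --- the composite $I_v \to \mathcal{O}^\times \to A^\times$ is the constant function $1$, so $\xi_v(\sigma) + \xi_v(\sigma)^{-1} = 2$ in $A$ for every $\sigma \in I_v$. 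The same holds verbatim for $\xi'_v$.

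For part (1), I would then observe that $R_x^{\textnormal{aux}, \{\xi_v\}}/\pi$ pro-represents the restriction to $\mathbb{F}$-algebras of the functor classifying deformations $\rho$ of $\rho_x$ of auxiliary type satisfying $\operatorname{tr}(\rho)|_{I_v} = \xi_v + \xi_v^{-1}$ for each $v \in \Sigma \setminus \Sigma_p$. By the previous paragraph, on $\mathbb{F}$-algebras this trace condition reads simply $\operatorname{tr}(\rho)|_{I_v} = 2$, which is manifestly independent of $\{\xi_v\}$. Hence the functors represented by $R_x^{\textnormal{aux}, \{\xi_v\}}/\pi$ and $R_x^{\textnormal{aux}, \{\xi'_v\}}/\pi$ coincide, yielding a canonical isomorphism between the rings. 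Part (2) follows from the identical argument applied to the universal pseudo-deformation functor, replacing $\operatorname{tr}(\rho)|_{I_v}$ by $T|_{I_v}$ throughout.

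I do not anticipate a real obstacle: the content is entirely formal. The only care needed is bookkeeping, namely verifying that in restricting each functor to $\mathbb{F}$-algebras one is comparing the same base-changed representability problem on both sides, and that the determinant condition $\det\rho = \chi$ (respectively, the fixed-determinant condition on the pseudo-deformation side) is unaffected by the substitution $\{\xi_v\} \leftrightarrow \{\xi'_v\}$, which it is since $\chi$ is specified independently of the choice of $\{\xi_v\}$.
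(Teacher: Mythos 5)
Your argument is correct and is exactly the one the paper has in mind: the paper states this proposition with no proof beyond ``from our definitions, we can get the following results immediately,'' and the content you supply --- that a $p$-power-order character into $\mathcal{O}^\times$ lands in $1+\pi\mathcal{O}$, so the condition $\operatorname{tr}(\rho)|_{I_v}=\xi_v+\xi_v^{-1}$ collapses to $\operatorname{tr}(\rho)|_{I_v}=2$ on $\mathbb{F}$-algebras, making the two restricted functors literally equal --- is precisely the intended justification. No gaps.
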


Let $(R^{\textnormal{ps,red}}, T^{\textnormal{red}})$ be the universal pseudo-deformation ring of auxiliary type such that $T^{\textnormal{red}}$ is reducible, i.e., $y(\sigma, \tau)=0$ for all $\sigma, \tau \in \operatorname{Gal}(F_{\Sigma}/F)$. 

\begin{prop}\label{red-pseudo} Assume $F$ is abelian over $\mathbb{Q}$.
	
	1) We have $\operatorname{dim}(R^{\textnormal{ps,red}}) \le 2$.
	
	2) Assume $\bar{\chi}$ is quadratic and can be extended to $G_{\mathbb{Q}}$. Then the dihedral locus $C^{\operatorname{dih}}$ of $\operatorname{Spec} R_\textnormal{aux}^{\textnormal{ps}}$, i.e., for any point $\mathfrak{p}$ in $C^{\operatorname{dih}}$, $\rho(\mathfrak{p}) \cong \operatorname{Ind}_{G_L}^{G_F} \theta$ for some character $\theta$ and the splitting field $L=F(\bar{\chi})$, is of dimension at most $2+[F: \mathbb{Q}]$. 
\end{prop}

\begin{proof}
	1) Consider the character $d : \operatorname{Gal}(F_{\Sigma}/F) \to (R^{\textnormal{ps,red}})^\times$ in the sense of Definition \ref{pseudo}, and it induces a natural surjection $\mathcal{O}[[G_{F}^{ab}]] \twoheadrightarrow R^{\textnormal{ps,red}}$. Hence, $\operatorname{dim}(R^{\textnormal{ps,red}}) \le 2+ \delta_F=2$. The last equation is from Leopoldt's conjecture for abelian totally real fields.
	
	2) From our definition, we can find that if $\mathfrak{p} \in C^{\operatorname{dih}}$ and $\rho(\mathfrak{p}) \cong \operatorname{Ind}_{G_L}^{G_F} \theta$, then the global character $\theta$ is a lifting of the character $\bar{\chi}|_{G_L}=\mathbf{1}$. Thus, the dihedral locus is at most of $\operatorname{dim} \mathcal{O}[[G_{L}^{ab}]]$, which is $2+[F: \mathbb{Q}]$ by Leopoldt's conjecture for abelian CM fields.
\end{proof}

\begin{cor}\label{red-one}
	Keep assumptions as in Proposition \ref{red-pseudo}. Let $\mathfrak{s}$ be a prime of $R_\textnormal{aux}^\textnormal{ps} $ containing $p$ and write $R=R_\textnormal{aux}^\textnormal{ps}/\mathfrak{s}$.
	
	1) If $\dim R \ge 2$, then we can find a prime $\mathfrak{s}'$ of $R_\textnormal{aux}^\textnormal{ps} $ containing $\mathfrak{s}$ such that the corresponding representation is irreducible.
	
	2) If $\dim R \ge 2+[F: \mathbb{Q}]$, then we can find a prime $\mathfrak{s}'$ of $R_\textnormal{aux}^\textnormal{ps} $ containing $\mathfrak{s}$ such that the corresponding representation is irreducible and not dihedral.
\end{cor}

\begin{proof}
	We first give a useful lemma in commutative algebra.
	
	\begin{lem}\label{Jacobon}
		Let $R$ be a noetherian local ring with maximal ideal $\mathfrak{m}_R$. Suppose that $V \subsetneqq \operatorname{Spec} R$ is a closed subset. Then there exists a one-dimensional prime $\mathfrak{p} \notin V$.
	\end{lem}
	
	\begin{proof}[Proof of Lemma \ref{Jacobon}]
		Using \cite[Lemma 28.6.4]{stacks-project}, the scheme $\operatorname{Spec} R \setminus\{\mathfrak{m}_R\}$ is Jacobson, i.e., the underlying topological space is Jacobson. As $V \ne \operatorname{Spec} R$, there exists a closed point of $\operatorname{Spec} R \setminus\{\mathfrak{m}_R\}$ not contained in $V$. This shows the result.
	\end{proof}
	
	Our assertions are direct consequences from Lemma \ref{Jacobon} and Proposition \ref{red-pseudo}.
\end{proof}

Following \cite{skinner1999residually} and \cite{pan2022fontaine}, it is natural to establish the relationship between deformation rings and pseudo-deformation rings.

\begin{prop}\label{pan}
	1) For any one-dimensional prime ideal $\mathfrak{p}_x \in \operatorname{Spec} R_x$ such that $\rho^\textnormal{univ} $~$\operatorname{mod} $~$\mathfrak{p}_x$ is irreducible, the natural map $\widehat{(R^\textnormal{ps}_{\mathfrak{p}})} \to \widehat{(R_x)_{\mathfrak{p}_x}}$ is an isomorphism, and $\operatorname{Spec}(R_x)_{\mathfrak{p}_x} \to \operatorname{Spec}R^\textnormal{ps}_{\mathfrak{p}}$ is surjective. Here $\mathfrak{p} = \mathfrak{p}_x \cap R^\textnormal{ps}$.
	
	2) For any $\mathcal{Q} \in \operatorname{Spec} R_x $ such that $\rho^\textnormal{univ} $~$\operatorname{mod} $~$\mathcal{Q}$ is irreducible, we have $$
	\operatorname{dim} R_x/\mathcal{Q} \le \operatorname{dim} R^\textnormal{ps}/ (\mathcal{Q} \cap R^\textnormal{ps}).$$ Moreover, if $\mathcal{Q}$ is a minimal prime, so is $\mathcal{Q} \cap R^\textnormal{ps}$.
	
	3) The results in 1), 2) are also true if we replace $(R_x, R^\textnormal{ps} )$ by $ (R_x^\textnormal{aux}, R_\textnormal{aux}^\textnormal{ps})$.
	
\end{prop}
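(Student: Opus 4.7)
The strategy follows the template of Skinner-Wiles \cite{skinner1999residually} and Pan \cite{pan2022fontaine}. The central input is the classical lifting theorem of Nyssen-Rouquier-Chenevier: if $\bar\sigma: G \to \operatorname{GL}_2(\kappa)$ is absolutely irreducible over a field $\kappa$, then for any CNL $\kappa$-algebra $(A,\mathfrak{m}_A)$ with residue field $\kappa$, the trace induces a bijection between $\operatorname{GL}_2(A)$-conjugacy classes of lifts $G \to \operatorname{GL}_2(A)$ of $\bar\sigma$ and pseudo-representations $G \to A$ lifting $\operatorname{tr}(\bar\sigma)$.

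For part 1), I would apply this with $\kappa = k(\mathfrak{p}_x)$ and $\bar\sigma = \rho^\textnormal{univ} \bmod \mathfrak{p}_x$, which is absolutely irreducible by hypothesis. Both $\widehat{(R_x)_{\mathfrak{p}_x}}$ and $\widehat{R^\textnormal{ps}_{\mathfrak{p}}}$ are CNL with residue field $\kappa$, and pro-represent, respectively, the deformation functor of $\bar\sigma$ and the pseudo-deformation functor of $\operatorname{tr}(\bar\sigma)$ on the category of such algebras. The lifting theorem identifies these two functors, so the natural map $\widehat{R^\textnormal{ps}_{\mathfrak{p}}} \to \widehat{(R_x)_{\mathfrak{p}_x}}$ must be the canonical isomorphism of pro-representing objects. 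The spectral surjection $\operatorname{Spec}(R_x)_{\mathfrak{p}_x} \twoheadrightarrow \operatorname{Spec} R^\textnormal{ps}_{\mathfrak{p}}$ follows by combining this isomorphism with faithful flatness of the completion maps $(R_x)_{\mathfrak{p}_x} \to \widehat{(R_x)_{\mathfrak{p}_x}}$ and $R^\textnormal{ps}_{\mathfrak{p}} \to \widehat{R^\textnormal{ps}_{\mathfrak{p}}}$: any prime of $R^\textnormal{ps}_{\mathfrak{p}}$ lifts to the completion, transports via the iso to a prime of $\widehat{(R_x)_{\mathfrak{p}_x}}$, and descends by going-down.

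For part 2), the key point is that irreducibility of $\rho^\textnormal{univ} \bmod \mathcal{Q}$ provides $\sigma_0, \tau_0 \in \operatorname{Gal}(F_\Sigma/F)$ with $y(\sigma_0, \tau_0) \notin \mathcal{Q}$. After inverting $y(\sigma_0, \tau_0)$, the formulas of Definition \ref{construction} express all matrix entries of a suitable conjugate of $\rho^\textnormal{univ}$ in terms of traces (up to a scalar ambiguity absorbed by a small finite étale cover, in the spirit of the argument in Proposition \ref{char} and \cite[Corollary 5.7.2]{pan2022fontaine}), producing an essential surjection from $(R^\textnormal{ps})_{y(\sigma_0, \tau_0)}$ onto $(R_x)_{y(\sigma_0, \tau_0)}$. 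Reducing modulo $\mathcal{Q}$ and $\mathcal{Q} \cap R^\textnormal{ps}$, and using that both localization at a single element and finite étale covers preserve Krull dimension, yields $\operatorname{dim} R_x/\mathcal{Q} \le \operatorname{dim} R^\textnormal{ps}/(\mathcal{Q} \cap R^\textnormal{ps})$. The moreover clause follows analogously: if $\mathcal{Q}$ were minimal but $\mathcal{Q} \cap R^\textnormal{ps}$ properly contained a prime $\mathfrak{q}$, I would lift $T^\textnormal{univ} \bmod \mathfrak{q}$ to an honest representation over a suitable cover of $\operatorname{Spec}(R^\textnormal{ps}/\mathfrak{q})[y(\sigma_0,\tau_0)^{-1}]$ via Definition \ref{construction}, and invoke the universal property of $R_x$ to produce a prime strictly below $\mathcal{Q}$, contradicting minimality. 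Part 3) is then formally identical: the auxiliary-type conditions (fixed determinant, prescribed ramification) descend compatibly from the pair $(R_x, R^\textnormal{ps})$ to $(R_x^\textnormal{aux}, R_\textnormal{aux}^\textnormal{ps})$, and the Nyssen-Rouquier input is insensitive to them.

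The main obstacle I expect is the scalar normalization in part 2): the entries recovered from $y(\sigma, \tau_0)$, $y(\sigma_0, \sigma)$ and $y(\sigma_0,\tau_0)$ via Definition \ref{construction} are only well-defined up to conjugation by a diagonal matrix, introducing a unit ambiguity that in Definition \ref{construction} was handled by the DVR assumption. Over the non-DVR localizations $(R^\textnormal{ps})_{y(\sigma_0, \tau_0)}$ encountered here one must pass to a small étale cover to rigidify the construction, and verifying that this cover has relative dimension zero—so that Krull dimensions on the two sides of the comparison agree—is the only nontrivial technical point.
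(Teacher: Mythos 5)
The paper does not actually prove this proposition: parts 1) and 2) are quoted verbatim from \cite[Corollary 2.3.6]{pan2022fontaine}, and part 3) is asserted to follow by the same argument. So the comparison is really against Pan's proof. Your part 1) is the standard and correct route, and it is essentially Pan's: specialize to the absolutely irreducible representation $\bar\sigma=\rho^{\textnormal{univ}}\bmod\mathfrak{p}_x$, identify $\widehat{(R_x)_{\mathfrak{p}_x}}$ and $\widehat{(R^{\textnormal{ps}})_{\mathfrak{p}}}$ with the universal deformation and pseudo-deformation rings of $\bar\sigma$ and $\operatorname{tr}\bar\sigma$, and invoke Nyssen--Rouquier/Carayol. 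Be aware that the middle step (that the completed localization of $R_x$ at $\mathfrak{p}_x$ pro-represents deformations of the specialization, and likewise for $R^{\textnormal{ps}}$) is itself a nontrivial lemma in Pan's paper, not a formality, but as a sketch this part is sound. Part 3) is also fine.

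Part 2) has a genuine gap. Your plan is to produce, after inverting $y(\sigma_0,\tau_0)$ and passing to a ``small finite \'etale cover,'' a surjection from (a cover of) $(R^{\textnormal{ps}})_{y(\sigma_0,\tau_0)}$ onto $(R_x)_{y(\sigma_0,\tau_0)}$. But the off-diagonal entries are not functions of the pseudo-representation: only the products $b_\sigma c_\tau=y(\sigma,\tau)$ are. After inverting $y(\sigma_0,\tau_0)$ one finds that $R_x[1/y(\sigma_0,\tau_0)]$ is generated over the image of $R^{\textnormal{ps}}$ by the single unit $b_{\sigma_0}$ (since $b_\sigma=y(\sigma,\tau_0)b_{\sigma_0}/y(\sigma_0,\tau_0)$ and $c_\tau=y(\sigma_0,\tau)/b_{\sigma_0}$), and there is no visible integral equation satisfied by $b_{\sigma_0}$: the ambiguity in it is a whole $\mathbb{G}_m$ of diagonal conjugation, not a finite group, so it cannot be absorbed into a finite \'etale cover. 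Note that here $\dim_{\mathbb{F}}H^1(F_\Sigma/F,\mathbb{F}(\bar\chi^{-1}))\ge[F:\mathbb{Q}]\ge 2$, so $R^{\textnormal{ps}}\to R_x$ is genuinely non-surjective and the extra generator is really there. What is true, and what Pan uses, is a statement at individual primes rather than a global finite morphism: the completed-local-ring isomorphism of part 1) (or its analogue after further localization) shows that the fiber of $\operatorname{Spec}R_x\to\operatorname{Spec}R^{\textnormal{ps}}$ is zero-dimensional at any prime where $\rho^{\textnormal{univ}}$ is irreducible, and the inequality $\dim R_x/\mathcal{Q}\le\dim R^{\textnormal{ps}}/(\mathcal{Q}\cap R^{\textnormal{ps}})$, as well as the preservation of minimality, then follow from this quasi-finiteness together with the surjectivity/going-down statement of part 1). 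Your sketch never connects part 2) to part 1), and the global surjection it relies on instead is precisely the point that fails; you flag the normalization issue as ``the only nontrivial technical point,'' but it is in fact the whole content of the assertion and is not resolved by the proposed \'etale cover.
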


\begin{proof}
	The first two assertions are \cite[Corollary 2.3.6]{pan2022fontaine}. The proof of the last assertion is the same as Pan's.
	
\end{proof}

\begin{defn} \cite[Definition 5.4.3]{pan2022fontaine}
	Let $R$ be a CNL ring. The connectedness dimension of $R$ is defined to be 
	$$c(R):=\min_{Z_1,Z_2}\{\dim (Z_1\cap Z_2)\}, $$
	where $Z_1,Z_2$ are non-empty unions of irreducible components of $\operatorname{Spec} R$ such that $Z_1\cup Z_2=\operatorname{Spec} R$.
\end{defn}

\begin{prop}\label{connect dim} Suppose $ \mathfrak{p}$ is a maximal ideal of $R_\textnormal{aux}^\textnormal{ps}[1/p]$ whose corresponding representation is absolutely irreducible.
	
	1) $\operatorname{dim}  (R_\textnormal{aux}^\textnormal{ps})_\mathfrak{p} \ge 2[F: \mathbb{Q}]$.
	
	2) The connectedness dimension $c((R_\textnormal{aux}^\textnormal{ps})_\mathfrak{p})$ is at least $2[F: \mathbb{Q}]-1$.
\end{prop}

\begin{proof}
	For the first part, see \cite[Lemma 7.1.2]{pan2022fontaine}.
	For the second part, see \cite[Lemma 7.4.6]{pan2022fontaine}.
\end{proof}

\begin{rem}
	Unlike the universal deformation ring $R_x^\textnormal{aux} $, the structure of the universal pseudo-deformation ring $R_\textnormal{aux}^\textnormal{ps} $ is mysterious because we cannot use the global characteristic formula as in Proposition \ref{dimension}. However, we can show that if $\mathfrak{p}$ is a one-dimensional irreducible prime of $R_\textnormal{aux}^\textnormal{ps} $, then every irreducible component of $\operatorname{Spec} R_\textnormal{aux}^\textnormal{ps}$ containing $\mathfrak{p}$ is of dimension at least $ 2[F: \mathbb{Q}]+1$, as an analogue to Proposition \ref{raux}. The proof is as follows.
	
	If our claim is not true, then we have a partition $Z_1\amalg Z_2=\operatorname{Spec} (R_\textnormal{aux}^\textnormal{ps})_\mathfrak{p}$, where $Z_i$ are finite sets of some irreducible components of $ \operatorname{Spec} (R_\textnormal{aux}^\textnormal{ps})_\mathfrak{p}$ such that $Z_1$ consists of all ones of dimension at least $ 2[F: \mathbb{Q}]$. By the first part of Proposition \ref{connect dim} and our assumption, both $Z_1$ and $Z_2$ are non-empty. By the second part of Proposition \ref{connect dim}, we can find $C_1 \in Z_1, C_2 \in Z_2$ and a prime $\mathfrak{q} \in C_1 \cap C_2$ of dimension at least $2[F: \mathbb{Q}]-1$. As $\operatorname{dim} C_2 \le 2[F: \mathbb{Q}]-1$, we know that $\mathfrak{q}$ is the generic point of $C_2$, which is against to the fact $\mathfrak{q} \in C_1$. Thus, $Z_2$ has to be empty.
	
	We do not know whether every irreducible component of $R_\textnormal{aux}^\textnormal{ps} $ is of dimension at least $ 2[F: \mathbb{Q}]+1$.
\end{rem}

\section{Pro-modularity}
In this section, we will prove the pro-modularity of the universal deformation ring. We fix a totally real field $F$ and a prime $p > 3$.

\subsection{Hecke algebras}
In this subsection, we introduce Hecke operators and Hecke algebras. The main reference is \cite[Section 3]{pan2022fontaine}.

Assume that the prime $p$ splits completely in the totally real field $F$.

Let $D$ be a quaternion algebra with centre $F$ which is ramified at all infinite places of $F$ and unramified at all places above $p$, and we fix isomorphisms $D \otimes F_v \cong M_2(F_v)$ for any $v$ where $D$ is unramified. We view $K_p = \prod_{v \mid p} \operatorname{GL}_2(\mathcal{O}_{F_v})$ and $D_p^\times =  \prod_{v \mid p} \operatorname{GL}_2(F_v)$ as subgroups of $(D \otimes_F \mathbb{A}_F)^\times$.

Let $A$ be a topological $\mathcal{O}$-algebra and $U = \prod_{v \mid p} U_v$ be an open compact subgroup of $(D \otimes_F \mathbb{A}_F^\infty)^\times$ such that $U_v \subseteq \operatorname{GL}_2(\mathcal{O}_{F_v})$ for $v \mid p$. We write $U^p= \prod_{v \nmid p} U_v$ (tame level) and $U_p =\prod_{v \mid p} U_v$. Let $\psi: (\mathbb{A}_F^\infty)^\times/F_+^\times \to A^\times$ be a continuous character, where $F_+$ is the set of totally positive elements in $F$. Let $\tau: \prod_{v \mid p} U_v \to \operatorname{Aut}(W_\tau)$ be a continuous representation on a finite $A$-module $W_\tau$, and we also view $\tau$ as a representation of $U$ by projecting to $\prod_{v \mid p} U_v$. 

Suppose that $\xi: U^p \to A^\times$ is a continuous smooth character such that $\psi \mid_{\prod_{v \nmid p}(\mathcal{O}_{F_v}^\times) \cap U_v}= \xi \mid_{\prod_{v \nmid p}(\mathcal{O}_{F_v}^\times) \cap U_v}$. We define $S_{\tau, \psi, \xi}(U,A)$ as the space of continuous functions: $f: D^\times \setminus (D \otimes_F \mathbb{A}_F^\infty)^\times \to W_\tau$, such that for any $g \in (D \otimes_F \mathbb{A}_F^\infty)^\times$, $z \in (\mathbb{A}_F^\infty)^\times$, $u=u^pu_p \in U$, we have $$
f(guz)= \psi(z)\xi(u^p)\tau(u_p^{-1})(f(g)).$$ 
If $\psi \mid_{U_p \cap \mathcal{O}_{F, p}^\times} = \tau^{-1} \mid_{U_p \cap \mathcal{O}_{F, p}^\times}$, then we have $$
S_{\tau, \psi, \xi}(U,A) \simeq \bigoplus_{i \in I} W_\tau ^{(t_i^{-1}D^{\times}t_i \cap U (\mathbb{A}_F^\infty)^\times)/F^\times},$$
where $I=D^\times \setminus (D \otimes_F \mathbb{A}_F^\infty)^\times/ U(\mathbb{A}_F^\infty)^\times$ and $\{t_i\}_{i \in I}$ is a set of representatives. We will simply write $S_{\psi, \xi}(U,A)$ if $\tau$ is the trivial action on $A$. We say that $U$ is \textit{sufficiently small} if $ (t_i^{-1}D^{\times}t_i \cap U (\mathbb{A}_F^\infty)^\times)/F^\times$ is trivial for all $i \in I$.

Let $\Sigma$ be a finite set of primes of $F$ containing all places above $p$ and places $v$ where either $D$ is ramified or $U_v$ is not a maximal open subgroup. For any $v \notin \Sigma$, we define the Hecke operator $T_v \in \operatorname{End}(S_{\tau, \psi, \xi}(U,A))$ to be the double coset action $[U_v \begin{pmatrix}
	\pi_v & ~\\
	~ & 1
\end{pmatrix}U_v]$. Precisely, if we write $U_v \begin{pmatrix}
\pi_v & ~\\
~ & 1
\end{pmatrix}U_v  = \coprod_i \gamma_i U_v $, then $$
(T_v \cdot f) (g) = \sum_i f(g\gamma_i), ~~f \in S_{\tau, \psi, \xi}(U,A). $$

We define Hecke algebra $\mathbb{T}_{\tau, \psi, \xi} ^\Sigma (U,A) \subseteq \operatorname{End}(S_{\tau, \psi, \xi}(U,A))$ to be the $A$-subalgebra generated  by all $T_v, v \notin \Sigma$. This is a finite commutative $A$-algebra. We will write $\mathbb{T}_{\psi, \xi}^\Sigma (U,A)$ for the Hecke algebra if $\tau $ is the trivial action on $A$. Suppose that $A= \mathcal{O}/ \pi^n$, $ U$ is sufficiently small and $\psi \mid_{U \cap (\mathbb{A}_F^\infty)^\times} $ is trivial modulo $ \pi^n$. Then we can check that the Hecke algebra $\mathbb{T}_{\psi, \xi}^\Sigma (U, \mathcal{O}/ \pi^n)$ is independent of $\Sigma$, and we may drop $\Sigma$ in this situation. See \cite[3.3.3]{pan2022fontaine}.

\begin{defn} (Big Hecke algebra) \label{big}
	Let $U^p$ be a tame level and $\psi: (\mathbb{A}_F^\infty)^\times/ (U^p \cap (\mathbb{A}_F^\infty)^\times)F_+^\times \to \mathcal{O}^\times$ be a continuous character. Then we define the \textit{Hecke algebra} $$
	\mathbb{T}_{\psi, \xi}(U^p) = \varprojlim_{(n, U_p) \in \mathcal{I}} \mathbb{T}_{\psi, \xi}(U^pU_p, \mathcal{O}/\pi^n), $$
where $\mathcal{I}$ is the set of pairs $(n, U_p)$ with $U_p \subseteq K_p$ and $n$ a positive integer such that $ \psi \mid_{U_p \cap \mathcal{O}_{F,p}^\times} \equiv 1~ \operatorname{mod}~\pi^n$.
	
\end{defn}

\subsection{Deformation rings and Hecke algebras}\label{sec 2}
In this subsection, we collect some results about deformation rings and Hecke algebras, most of which are from \cite{pan2022fontaine}.

Assume $F$ is an abelian totally real number field of even degree over $\mathbb{Q}$ in which $p$ splits completely. As in Section \ref{mis}, we assume that for all $v \in \Sigma \setminus \Sigma_p$, we have $p \mid \operatorname{Nm}(v)-1$. Let $\xi_v: k(v)^\times \to \mathcal{O}^\times$ be characters of $p$-power order for $v \in \Sigma \setminus \Sigma_p$, and we can view them as characters of $I_v$ by local class field theory. Let $\chi: \operatorname{Gal}(F_{\Sigma}/F) \to \mathcal{O}^{\times}$ be a continuous totally odd character such that 

1) $\chi$ is unramified outside $ \Sigma_p$,

2) $\chi(\operatorname{Frob}_v) \equiv 1~\operatorname{mod}~\pi$ for  
$v \in \Sigma \setminus \Sigma_p$.
\\

Let $D$ be a quaternion algebra over $F$ ramified exactly at all infinite places, and we fix an isomorphism between $( D \otimes_F \mathbb{A}_F^\infty)^\times$ and $\operatorname{GL}_2(\mathbb{A}_F^\infty)$. Let $\psi=\chi \epsilon$ be a character of $ (\mathbb{A}_F^\infty)^\times/ F_+^\times$ via global class field theory. Let $U^p=\prod_{v \nmid p} U_v$ be a tame level such that $U_v= \operatorname{GL}_2(\mathcal{O}_{F_v})$ if $v \notin \Sigma$ and $U_v = \operatorname{Iw}_v := \{g \equiv \begin{pmatrix}
	* & *\\
	0 & *
\end{pmatrix}
~\operatorname{mod}~\pi_v, g \in \operatorname{GL}_2(\mathcal{O}_{F_v})\}$ otherwise. For any $v \in \Sigma \setminus \Sigma_p$, the map $\begin{pmatrix}
	a & b\\
	c & d
\end{pmatrix} \to \xi_v (a/d~\operatorname{mod}~\pi_v)$ defines a character of $U_v$ and the product of $\xi_v$ can be viewed as a character $\xi$ of $U^p$ by projecting to $\prod_{v \in  \Sigma \setminus \Sigma_p} U_v$. From our setting, we can define a Hecke algebra $\mathbb{T}_{\xi} := \mathbb{T}_{\psi, \xi}(U^p)$ as in the previous subsection.\\

From now on, we suppose that $\bar{\rho}_0= 1 \oplus \bar{\chi}$ is modular, i.e. $T_v-(1+\chi(\operatorname{Frob}_v)), v \notin \Sigma$ and $\pi$ generate a maximal ideal $\mathfrak{m}_{\xi}$ of $ \mathbb{T}_{\xi}$. 

\begin{thm} \label{dimofT}
	Each irreducible component of $(\mathbb{T}_{\xi})_{\mathfrak{m}_{\xi}} $ is of characteristic zero and of dimension at least $1+2[F:\mathbb{Q}]$.
\end{thm}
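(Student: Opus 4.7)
The plan is to produce sufficiently many classical arithmetic specializations of $(\mathbb{T}_\xi)_{\mathfrak{m}_\xi}$---obtained from classical Hilbert modular eigenforms of varying weight and nebentypus at $p$---to establish both the characteristic-zero assertion and the dimension lower bound $1+2[F:\mathbb{Q}]$ on every irreducible component. Because $D$ is definite (ramified at all infinite places of $F$), each finite-level space $S_{\tau,\psi,\xi}(U^pU_p,\mathcal{O})$ is a finite free $\mathcal{O}$-module, so the standard classical-vs.-$p$-adic comparison is available.

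First, the modularity hypothesis guarantees at least one classical eigenform $f_0$ of some arithmetic weight with residual Galois representation $\bar{\rho}_0$, giving a minimal prime of $(\mathbb{T}_\xi)_{\mathfrak{m}_\xi}$ not containing $\pi$. The key step is then to interpolate $f_0$ into a $p$-adic family whose parameter space has $p$-adic dimension $2[F:\mathbb{Q}]$. For this I would work with completed cohomology $\widetilde{S}_\xi := \varprojlim_n \varinjlim_{U_p} S_{\psi,\xi}(U^p U_p,\mathcal{O}/\pi^n)_{\mathfrak{m}_\xi}$, which is an admissible representation of $K_p = \prod_{v\mid p}\operatorname{GL}_2(\mathcal{O}_{F_v})$ over $\mathcal{O}$ on which $(\mathbb{T}_\xi)_{\mathfrak{m}_\xi}$ acts faithfully. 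Since $p$ splits completely in $F$, the diagonal torus $T_p\subseteq K_p$ has $\mathbb{Z}_p$-rank $2[F:\mathbb{Q}]$, and the Iwasawa subalgebra generated by its action on $\widetilde{S}_\xi$ has Krull dimension $1+2[F:\mathbb{Q}]$; by standard Jacquet-module and locally-algebraic-vector arguments at $p$ in the vein of \cite{pan2022fontaine}, this subalgebra sits inside $(\mathbb{T}_\xi)_{\mathfrak{m}_\xi}$ and has image of full dimension in every irreducible component that supports a classical locally algebraic vector.

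Applying classicality results to $\widetilde{S}_\xi$ produces classical eigenforms at a Zariski-dense subset of arithmetic points, each of which is a characteristic-zero prime of $(\mathbb{T}_\xi)_{\mathfrak{m}_\xi}$. This forces every irreducible component meeting the classical locus to have generic characteristic zero and dimension at least $1+2[F:\mathbb{Q}]$. To handle a component that might contain no classical point, I would invoke the Chebotarev construction to produce a pseudo-representation $T:\operatorname{Gal}(F_\Sigma/F)\to \mathbb{T}_\xi$ lifting $\bar{\rho}_0^{\textnormal{ss}}$, giving a surjection $R_\textnormal{aux}^{\textnormal{ps}}\twoheadrightarrow (\mathbb{T}_\xi)_{\mathfrak{m}_\xi}$; combined with Proposition \ref{red-pseudo}, any component whose image under this surjection factors through $R^{\textnormal{ps,red}}$ would have dimension at most $2+\delta_F$, which is strictly smaller than $1+2[F:\mathbb{Q}]$ for $F$ abelian totally real, giving the required contradiction.

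The main obstacle is the rigorous interpolation step: the assertion that the image of the torus Iwasawa algebra in $\mathbb{T}_\xi$ has Krull dimension exactly $1+2[F:\mathbb{Q}]$ rests on an admissibility/faithfulness statement for completed cohomology coming from the $p$-adic Langlands machinery, and verifying its hypotheses in the present residually reducible setup is delicate. The assumptions on $\bar{\chi}|_{G_{F_v}}$ (non-trivial and not cyclotomic) and the complete splitting of $p$ in $F$ should be exactly what is needed for this input to apply, but making the interpolation precise in the non-ordinary direction is where the bulk of the work lies.
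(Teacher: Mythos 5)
First, note that the paper does not prove this statement itself: Theorem \ref{dimofT} is quoted verbatim from \cite[Theorem 3.6.1]{pan2022fontaine}, whose proof is the Gouv\^ea--Mazur ``infinite fern'' argument (in Chenevier's form for totally real fields): classical points are Zariski dense in every irreducible component of $(\mathbb{T}_{\xi})_{\mathfrak{m}_{\xi}}$, and at each classical point the $2^{[F:\mathbb{Q}]}$ refinements (choices of $p$-stabilization at the places above $p$) produce branches of the eigenvariety whose tangent directions in $\operatorname{Spec}R^{\textnormal{ps}}$ span an extra $[F:\mathbb{Q}]$ dimensions beyond the weight directions. Your proposal replaces this with a different mechanism, and that mechanism does not work.

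The central gap is the claim that the Iwasawa algebra of the diagonal torus $T_p\subseteq K_p$ ``sits inside $(\mathbb{T}_\xi)_{\mathfrak{m}_\xi}$'' with image of Krull dimension $1+2[F:\mathbb{Q}]$. Two things go wrong. (i) $(\mathbb{T}_\xi)_{\mathfrak{m}_\xi}$ is by definition the closed algebra generated by the operators $T_v$, $v\notin\Sigma$; the torus action on completed cohomology commutes with these operators but there is no reason for it to factor through them, so the torus Iwasawa algebra is not a subalgebra of the Hecke algebra. (ii) Even if you enlarge the Hecke algebra by operators at $p$ (diamond operators, $U_p$), the central character is fixed equal to $\psi=\chi\epsilon$ throughout the construction, so the torus acts through a quotient of $\mathbb{Z}_p$-rank only $[F:\mathbb{Q}]$, not $2[F:\mathbb{Q}]$. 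This is exactly the dimension of the weight space/ordinary family --- compare Corollary \ref{dimord}, where $\dim R^{\textnormal{ps,ord}}=1+[F:\mathbb{Q}]$ --- so your interpolation step can at best produce families of dimension $1+[F:\mathbb{Q}]$ and cannot account for the remaining $[F:\mathbb{Q}]$; that deficit is precisely what the fern argument supplies. Finally, your fallback for components with no classical point (either it meets the classical locus or it factors through $R^{\textnormal{ps,red}}$) is not an exhaustive dichotomy; what one actually proves is that weight-two classical points of varying level at $p$ are dense in $\operatorname{Spec}(\mathbb{T}_\xi)_{\mathfrak{m}_\xi}$ by construction of the inverse limit, whence every irreducible component contains a dense set of them, which also yields the characteristic-zero assertion.
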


\begin{proof}
	See \cite[Theorem 3.6.1]{pan2022fontaine}.
\end{proof}
	
	\begin{rem}
		In some cases, the big Hecke algebra is equidimensional of dimension $1+2[F:\mathbb{Q}]$. See the proof of \cite[Theorem 3.6.1]{pan2022fontaine} and \cite[Remark 3.6.10]{pan2022fontaine}.
	\end{rem}
	
As the discussion in \cite[3.3.4]{pan2022fontaine}, there is a natural pseudo-deformation $T_{\mathfrak{m}_{\xi}} : \operatorname{Gal}(F_{\Sigma}/F) \to (\mathbb{T}_{\xi})_{\mathfrak{m}_{\xi}}$ with determinant $\chi$ sending $\operatorname{Frob}_v$ to Hecke operator $T_v$ for $v \notin \Sigma$. By the universal property, we get a natural surjection $ R_\textnormal{aux}^\textnormal{ps} \twoheadrightarrow (\mathbb{T}_{\xi})_{\mathfrak{m}_{\xi}}$. By the local-global compatibility at $v \in \Sigma \setminus \Sigma_p$, we know that such surjection factors through $R^{\textnormal{ps}, \{\xi_v\}} $. Hence, we also get a natural surjection $R^{\textnormal{ps}, \{\xi_v\}} \twoheadrightarrow (\mathbb{T}_{\xi})_{\mathfrak{m}_{\xi}} $.

For any $\mathfrak{p} \in \operatorname{Spec} (\mathbb{T}_{\xi})_{\mathfrak{m}_{\xi}} $, we can define a two-dimensional semi-simple representation $\rho(\mathfrak{p}): \operatorname{Gal}(F_{\Sigma}/F) \to \operatorname{GL}_2(k(\mathfrak{p}))$ with trace $T_{\mathfrak{m}_{\xi}}~\operatorname{mod}~\mathfrak{p}$ in the sense of Definition \ref{construction}.

The following definitions are from \cite[Section 4.1]{pan2022fontaine}.

\begin{defn}\label{nice}
	(1) We say that a prime  of $ R^{\textnormal{ps}, \{\xi_v\}}$ is \textit{pro-modular} if it comes from a prime of $(\mathbb{T}_{\xi})_{\mathfrak{m}_{\xi}} $. We say that a prime of $ R_\textnormal{aux}^\textnormal{ps}$ is \textit{pro-modular} if it comes from a pro-modular prime of some $ R^{\textnormal{ps}, \{\xi_v\}}$.
	
	(2) Let $\mathfrak{q} $ be a prime of $ (\mathbb{T}_{\xi})_{\mathfrak{m}_{\xi}} $ and $A$ be the normal closure of $(\mathbb{T}_{\xi})_{\mathfrak{m}_{\xi}} / \mathfrak{q} $ in $k(\mathfrak{q})$. We say that $\mathfrak{q} $ is a \textit{nice} prime if $\mathfrak{q} $ contains $p$ and $\operatorname{dim} (\mathbb{T}_{\xi})_{\mathfrak{m}_{\xi}} / \mathfrak{q}=1$ and there exists a two-dimensional representation $\rho(\mathfrak{q})^o : \operatorname{Gal}(F_{\Sigma}/F) \to \operatorname{GL}_2(A)$ satisfying:
	
	\quad 1) $\rho(\mathfrak{q})^o \otimes k(\mathfrak{q}) \cong \rho(\mathfrak{q})$ is irreducible.
	
	\quad 2) The $\operatorname{mod}~\mathfrak{m}_A$ reduction $\bar{\rho}_b$ of $\rho(\mathfrak{q})^o$ is a non-split extension and has the form $ \bar{\rho}_b(g)=\begin{pmatrix}
		* & *\\
		0 & *
	\end{pmatrix}$, $g \in \operatorname{Gal}(F_{\Sigma}/F)$. Here $\mathfrak{m}_A $ is the maximal ideal of $A$.
	
	\quad 3) (dihedral condition) If $\rho(\mathfrak{q}) $ is dihedral, namely isomorphic to $\operatorname{Ind}_{G_L}^{G_F} \theta$ for some quadratic extension $L$ of $F$ and continuous character $\theta: G_L \to k(\mathfrak{q})^\times$, then $L \cap F(\zeta_p) = F$,
	where $\zeta_p $ is a primitive $p$-th root of unity.
	
	\quad 4) $\rho(\mathfrak{q})^o \mid_{G_{F_v}} = \bar{\rho}_b \mid_{G_{F_v}} $ for any $v \in \Sigma \setminus \Sigma_p$.
	
	(3) We say that a prime of $ R^{\textnormal{ps}, \{\xi_v\}}$ is \textit{nice} if it comes from a nice prime of $(\mathbb{T}_{\xi})_{\mathfrak{m}_{\xi}} $.
	
\end{defn}

The following proposition gives some sufficient conditions for the third condition of (2) in Definition \ref{nice}.

\begin{prop}\label{equiv}
	Let $\mathfrak{q}$ be a prime ideal of $ R^{\textnormal{ps}, \{\xi_v\}}$ containing $p$ such that $ R^{\textnormal{ps}, \{\xi_v\}}/\mathfrak{q}$ is one-dimensional. Suppose $\rho(\mathfrak{q})$ is irreducible. Then the third condition of (2) in Definition \ref{nice} holds for $\mathfrak{q}$ if one of the following conditions holds:
	
	1)  $\bar{\chi}$ is not quadratic. 
	
	2) $\bar{\chi}\mid_{G_{F_v}}$ is trivial for some $v \mid p$.
	
	3) There exists a place $v \mid p$ such that $\bar{\chi}\mid_{G_{F_v}}$ is not trivial and $\rho(\mathfrak{q})^o\mid_{G_{F_v}} \cong \begin{pmatrix}
		\chi_{v,1} & *\\
		0 & \chi_{v,2}
	\end{pmatrix}$ is reducible. Moreover, $ \chi_{v,1}$ is of infinite order.
\end{prop}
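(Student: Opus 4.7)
The plan is to argue by contradiction: suppose $\rho(\mathfrak{q}) \cong \operatorname{Ind}_{G_L}^{G_F}\theta$ is dihedral with $L \cap F(\zeta_p) \ne F$. Since $\operatorname{Gal}(F(\zeta_p)/F) \cong (\mathbb{Z}/p\mathbb{Z})^\times$ is cyclic of order $p-1$, it admits a unique subgroup of index $2$, so $F(\zeta_p)/F$ has a unique quadratic subextension, namely $F(\sqrt{p^*})$ with $p^* = (-1)^{(p-1)/2}p$. Hence $L = F(\sqrt{p^*})$. Let $\eta$ denote the associated quadratic character of $G_F$. Since $p$ splits completely in $F$, at each $v \mid p$ we have $F_v = \mathbb{Q}_p$ and $L_w/F_v = \mathbb{Q}_p(\sqrt{p^*})/\mathbb{Q}_p$ is a ramified quadratic extension, so $\eta|_{I_v}$ is nontrivial for every such $v$.

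Next I will exploit the twisting identity: because $\eta|_{G_L} = 1$, the projection formula gives $\rho(\mathfrak{q}) \otimes \eta = \operatorname{Ind}_{G_L}^{G_F}(\theta \cdot \eta|_{G_L}) = \rho(\mathfrak{q})$. Reducing modulo $\mathfrak{m}_A$ and passing to semisimplifications, the resulting trace equality forces the multisets of characters $\{1, \bar\chi\}$ and $\{\eta, \eta\bar\chi\}$ to coincide as functions $G_F \to \overline{\mathbb{F}}^\times$. Since $\eta \neq 1$, I deduce $\bar\chi = \eta$; in particular, $\bar\chi$ must be a nontrivial quadratic character, and it must cut out $L$.

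Case (1) then immediately contradicts the derived fact that $\bar\chi = \eta$ is quadratic. For Case (2), $\bar\chi|_{G_{F_v}} = \eta|_{G_{F_v}}$ is ramified by the first paragraph, hence nontrivial, contradicting the hypothesis $\bar\chi|_{G_{F_v}} = 1$. For Case (3), the reducibility of $\rho(\mathfrak{q})|_{G_{F_v}}$ together with $L_w/F_v$ being a ramified quadratic extension forces $\theta|_{G_{L_w}}$ to be invariant under $\operatorname{Gal}(L_w/F_v)$ and thus to extend to a character $\tilde\theta: G_{F_v} \to A^\times$; then $\rho(\mathfrak{q})|_{G_{F_v}} \cong \tilde\theta \oplus \tilde\theta \cdot \eta_v$, so $\{\chi_{v,1}, \chi_{v,2}\} = \{\tilde\theta, \tilde\theta\eta_v\}$ and $\chi_{v,1}/\chi_{v,2}$ has order $2$. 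Combined with the determinant identity $\chi_{v,1}\chi_{v,2} = \chi|_{G_{F_v}}$, this yields $\chi_{v,1}^2 = \chi|_{G_{F_v}} \cdot \eta_v$. The infinite-order assumption on $\chi_{v,1}$, the de Rham structure of $\chi|_{G_{F_v}}$, and the finite order of $\eta_v$ should then combine (via $p$-adic Hodge-theoretic constraints on the possible square roots of $\chi|_{G_{F_v}}\eta_v$, such as parity of Hodge--Tate weights) to produce the required contradiction.

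The main obstacle I anticipate is Case (3): Cases (1) and (2) drop out essentially formally from the global linear-disjointness argument above and the ramification of $\sqrt{p^*}$ above $p$, but extracting a contradiction in Case (3) will require a careful analysis of the local characters $\tilde\theta$ under the constraints imposed simultaneously by the ramified quadratic extension $L_w/F_v$, the infinite-order hypothesis, and the de Rham condition on $\chi$.
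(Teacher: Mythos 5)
Your global reduction is correct and is essentially the standard route (the paper itself offers no argument beyond citing \cite[Lemma 4.1.6]{pan2022fontaine}): since $p$ splits completely in $F$ one has $F\cap\mathbb{Q}(\zeta_p)=\mathbb{Q}$, so $\operatorname{Gal}(F(\zeta_p)/F)\cong(\mathbb{Z}/p\mathbb{Z})^\times$ is cyclic and $F(\sqrt{p^*})$ is the unique quadratic subextension, ramified at every $v\mid p$; and the twist-invariance $\rho(\mathfrak{q})\otimes\eta\cong\rho(\mathfrak{q})$ combined with linear independence of characters applied to $\operatorname{tr}\rho(\mathfrak{q})\bmod \mathfrak{m}_A=1+\bar\chi$ forces $\bar\chi=\eta$. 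This disposes of cases (1) and (2) exactly as you say.

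The genuine gap is in case (3), and the repair you propose would not work. Since $\mathfrak{q}$ contains $p$, the rings $(\mathbb{T}_{\xi})_{\mathfrak{m}_{\xi}}/\mathfrak{q}$, $k(\mathfrak{q})$ and $A$ all have characteristic $p$: $\rho(\mathfrak{q})^o$ is a characteristic-$p$ representation, its determinant is the \emph{finite-order} character $\bar\chi$ (the structure map $\mathcal{O}\to A$ kills $\pi$, so $\chi$ lands in $\mathbb{F}^\times\subset A^\times$), and there is no de Rham or Hodge--Tate structure to invoke, so ``parity of Hodge--Tate weights of square roots of $\chi|_{G_{F_v}}\eta_v$'' is not a meaningful constraint in this setting. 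Fortunately the contradiction is immediate once the correct determinant is used: from $\operatorname{Ind}_{G_{L_w}}^{G_{F_v}}\theta_w\otimes\eta_v\cong\operatorname{Ind}_{G_{L_w}}^{G_{F_v}}\theta_w$ and linear independence of characters one gets $\chi_{v,2}=\chi_{v,1}\eta_v$ (the alternative $\chi_{v,1}\eta_v=\chi_{v,1}$ would force $\eta_v=1$, impossible since $L_w/F_v$ is ramified quadratic), whence $\chi_{v,1}^2=\chi_{v,1}\chi_{v,2}\,\eta_v^{-1}=\bar\chi|_{G_{F_v}}\eta_v$, a character of finite order with values in the finite group generated by $\mathbb{F}^\times$ and $\{\pm1\}$. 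Hence $\chi_{v,1}$ has finite order, contradicting the hypothesis. So your setup for (3) is right, but the finishing step is this elementary finite-order observation, not a $p$-adic Hodge-theoretic one.
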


\begin{proof}
	See \cite[Lemma 4.1.6]{pan2022fontaine}.
\end{proof}

The following theorem is a key evidence to prove our  pro-modularity argument.

\begin{thm} \label{R=T}   $(R_{\mathfrak{q}}=\mathbb{T}_{\mathfrak{q}})$
	Let $\mathfrak{q} $ be a nice prime of $(\mathbb{T}_{\xi})_{\mathfrak{m}_{\xi}} $ and $\mathfrak{q}^\textnormal{ps}= \mathfrak{q} \cap R^{\textnormal{ps}, \{\xi_v\}}$. Then the natural surjective map $ (R^{\textnormal{ps}, \{\xi_v\}})_{\mathfrak{q}^\textnormal{ps}} \twoheadrightarrow (\mathbb{T}_{\xi})_{\mathfrak{q}}$ has nilpotent kernel.
	
\end{thm}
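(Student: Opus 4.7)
The plan is to mirror Pan's proof of the analogous local $R = \mathbb{T}$ theorem \cite[Theorem 6.2.1]{pan2022fontaine}. First I would reduce to a statement about a lattice deformation ring: since $\mathfrak{q}$ is nice, $\rho(\mathfrak{q})$ is absolutely irreducible (condition 1) of Definition \ref{nice}), so by the pseudo-representation analogue of Proposition \ref{pan}, the completed localization $\widehat{(R^{\textnormal{ps}, \{\xi_v\}})_{\mathfrak{q}^\textnormal{ps}}}$ agrees, up to a change of coefficients from $(\mathbb{T}_\xi)_{\mathfrak{m}_\xi}/\mathfrak{q}$ to its normal closure $A$, with the completed localization at the corresponding prime of the universal deformation ring $R_{\bar{\rho}_b}^{\{\xi_v\}}$ of $\bar{\rho}_b$ of auxiliary type with prescribed local conditions $\xi_v$. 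Since the statement "nilpotent kernel" is preserved under completion and faithfully flat base change, it suffices to prove that the natural surjection $R_{\bar{\rho}_b}^{\{\xi_v\}} \twoheadrightarrow (\mathbb{T}_\xi)_\mathfrak{q}$ has nilpotent kernel.

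Next I would run a Taylor-Wiles-Kisin patching argument for $\bar{\rho}_b$. The local deformation conditions imposed are: auxiliary type at each $v \in \Sigma \setminus \Sigma_p$ encoded by $\xi_v$ (legitimate by Proposition \ref{char}), and a $p$-adic Hodge theoretic condition at each $v \mid p$ matching the de Rham character $\chi$ (allowed by hypothesis 5) on $\chi$). The evenness of $[F:\mathbb{Q}]$ and the complete splitting of $p$ give us the quaternion algebra $D$ used in the construction of $\mathbb{T}_\xi$ and hence a well-defined modular module $S_{\psi, \xi}(U^p U_p, \mathcal{O}/\pi^n)$ to patch against. For each $n \ge 1$, Chebotarev density combined with the big-image properties of $\rho(\mathfrak{q})^o$ (ensured by the nice-prime conditions) would produce a Taylor-Wiles set $Q_n$ of auxiliary primes; inverse-limiting the associated augmented deformation rings and modular modules yields a patched module $M_\infty$ over a power series ring $S_\infty$ acted on by a patched ring $R_\infty$. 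A standard Cohen--Macaulay/depth argument (comparing Krull dimensions on both sides) then gives that $M_\infty$ is faithful over $R_\infty$, and unpatching at $\mathfrak{q}$ delivers the desired nilpotent-kernel statement.

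The main obstacle is the construction of the Taylor-Wiles primes $Q_n$ in this residually reducible setting; this is exactly what the \emph{nice} prime conditions are tailored to enable. Condition 3) rules out the dihedral exception in which the image of $\rho(\mathfrak{q})^o$ is too small to supply Frobenius elements with distinct eigenvalues modulo $\mathfrak{m}_A$ (the requirement $L \cap F(\zeta_p) = F$ is precisely what makes the Chebotarev argument succeed); condition 4) guarantees that adding ramification at the auxiliary primes does not conflict with the prescribed local behavior at $\Sigma \setminus \Sigma_p$; and the non-splitness in condition 2) is what produces a genuine lattice deformation problem rather than merely a pseudo-deformation one. A secondary technical point is that $\rho(\mathfrak{q})^o$ is valued in the normal closure $A$ rather than in $(\mathbb{T}_\xi)_{\mathfrak{m}_\xi}/\mathfrak{q}$ itself, so the descent from $A$ back to the original Hecke quotient must be tracked through the patching, handled as in \cite[Section 6]{pan2022fontaine}.
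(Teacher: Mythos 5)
Your overall strategy --- use the irreducibility of $\rho(\mathfrak{q})$ to pass from the pseudo-deformation ring to a genuine deformation problem at the nice prime, then run Taylor--Wiles--Kisin patching against the quaternionic modular modules --- is the same as the paper's, which simply defers to \cite[Section 4]{pan2022fontaine} and later sketches exactly this patching argument just before Theorem \ref{dimT}. However, two points in your sketch would derail the proof as written. First, you impose ``a $p$-adic Hodge theoretic condition at each $v\mid p$ matching the de Rham character $\chi$''. This is wrong for the \emph{big} Hecke algebra $(\mathbb{T}_{\xi})_{\mathfrak{m}_{\xi}}$: the only condition at $p$ is the fixed determinant, and the factor of $R_{\textnormal{loc}}^{\{\xi_v\}}$ at $v\mid p$ is the full framed fixed-determinant local deformation ring. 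The object being patched is built from the full tower of levels $U_p$ and all $n$, and $(\mathbb{T}_{\xi})_{\mathfrak{m}_{\xi}}$ has dimension $1+2[F:\mathbb{Q}]$; a de Rham condition at $p$ would cut the patched ring down to a dimension that cannot carry this module, and the key equality $\operatorname{dim}_{(R^{\{\xi_v\}})'}(\textnormal{m}_\infty^{\{\xi_v\}})=\operatorname{dim}\widehat{(R_{\infty}^{\{\xi_v\}})_{\mathfrak{q}_{\infty}^{\{\xi_v\}}}}$ would fail. Hypothesis 5) concerns only the determinant character.

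Second, your closing step --- ``a standard Cohen--Macaulay/depth argument then gives that $M_\infty$ is faithful over $R_\infty$'' --- is precisely where the remaining work lies, and your sketch omits the idea that makes it go through. The dimension comparison only shows that the support of the patched module is a union of top-dimensional irreducible components of $\widehat{(R_{\infty}^{\{\xi_v\}})_{\mathfrak{q}_{\infty}^{\{\xi_v\}}}}$; to promote this to \emph{full} support one needs Taylor's Ihara-avoidance argument. When all $\xi_v$ are non-trivial the local deformation rings at $v\in\Sigma\setminus\Sigma_p$ are irreducible --- this is where condition 4) of Definition \ref{nice}, namely $\rho(\mathfrak{q})^o\mid_{G_{F_v}}=\bar{\rho}_b\mid_{G_{F_v}}$, actually enters (it is not about compatibility with the auxiliary Taylor--Wiles primes, which are disjoint from $\Sigma$) --- so $\widehat{(R_{\infty}^{\{\xi_v\}})_{\mathfrak{q}_{\infty}^{\{\xi_v\}}}}$ is irreducible and full support follows; the case where some $\xi_v$ is trivial is then deduced by comparing the two patched situations modulo $\pi$ as in \cite[Theorem 4.1]{PMIHES_2008__108__183_0}. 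Without this step the faithfulness claim, and hence the nilpotent-kernel statement, does not follow from the dimension count alone.
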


\begin{proof}
	See \cite[Section 4]{pan2022fontaine}.
\end{proof}

\begin{rem}\label{dim of nice}
	We briefly summarize some main steps of the proof of Theorem \ref{R=T}. Here we follow most notations as in \cite[Section 4]{pan2022fontaine}, and one can find more details there.
	
	Let $\mathfrak{q}$ be a nice prime of $(\mathbb{T}_{\xi})_{\mathfrak{m}_{\xi}}$. Let $B$ be the topological closure of the $\mathbb{F}$-algebra generated by all the entries of $\rho(\mathfrak{q})^o(\operatorname{Gal}(F_{\Sigma}/F))$. By Chebotarev’s density Theorem, we may find a finite set of primes $T'$ disjoint with $\Sigma$ such
	that the entries of $\rho(\mathfrak{q})^o(\operatorname{Frob}_v), v \in T'$, topologically generate $B$. Let $P= T' \cup \Sigma$.
	
	Let $r=\operatorname{dim}_{k(\mathfrak{q})} H^1(G_{F,P},\operatorname{ad}^0 \rho(\mathfrak{q})(1))$ (the cardinality of Taylor-Wiles primes). Let $R_{\infty}^{\{\xi_v\}}= R_{\textnormal{loc}}^{\{\xi_v\}}[[x_1, ..., x_g]]$, where $ R_{\textnormal{loc}}^{\{\xi_v\}}$ is the completed tensor product of local deformation rings determined by primes in $P$ (see \cite[Definition 4.2.2]{pan2022fontaine}) and $g=r+|P|-[F:\mathbb{Q}]-1$. Let $S_\infty'=\mathcal{O}[[y_1, ... , y_{4|P|-1}, s_1', ... , s_r']]$, and $S_\infty=\mathcal{O}[[y_1, ... , y_{4|P|-1}, s_1, ... , s_r]]$ with an ideal $\mathfrak{a}_1=(y_1, ... , y_{4|P|-1}, s_1, ... , s_r)$ such that $ S_\infty$ is a finite free $S_\infty'$-algebra.
	
	Let $\textnormal{m}_\infty^{\{\xi_v\}}$ and $\textnormal{m}_0^{\{\xi_v\}}$ be the \textit{patched Hecke modules }(see \cite[Section 4.7]{pan2022fontaine}). Then $\textnormal{m}_\infty^{\{\xi_v\}} $ is a flat  $ S_\infty$-module and $\textnormal{m}_\infty^{\{\xi_v\}}/ \mathfrak{a}_1 \textnormal{m}_\infty^{\{\xi_v\}} \cong (\textnormal{m}_0^{\{\xi_v\}})^{\oplus 2^r}$. By Pan's local-global compatibility result \cite[Corollary 3.5.8]{pan2022fontaine}, $\textnormal{m}_0^{\{\xi_v\}}$ is a finitely generated faithful $\widehat{(\mathbb{T}_{\xi})_{\mathfrak{q}}}$-module. By Theorem \ref{dimofT}, we have $\operatorname{dim}_{\widehat{(\mathbb{T}_{\xi})_{\mathfrak{q}}}} (\textnormal{m}_0^{\{\xi_v\}})= \operatorname{dim} \widehat{(\mathbb{T}_{\xi})_{\mathfrak{q}}} \ge 2[F:\mathbb{Q}]$ since $\mathfrak{q}$ is of dimension $1$.
	
	Let $(R^{\{\xi_v\}})'$ be $\widehat{(R_{\infty}^{\{\xi_v\}})_{\mathfrak{q}_{\infty}^{\{\xi_v\}}}} \otimes_{S_\infty'} S_\infty$, where ${\mathfrak{q}}_{\infty}^{\{\xi_v\}} $ is the prime of $ R_{\infty}^{\{\xi_v\}}$ corresponding to the nice prime $\mathfrak{q}$.	We know that $\widehat{(\mathbb{T}_{\xi})_{\mathfrak{q}}} $ is a natural quotient of $(R^{\{\xi_v\}})'$ by mapping $ S_\infty$ to $S_\infty/ \mathfrak{a}_1=\mathcal{O}$. Note that $ y_1, ... , y_{4|P|-1}, s_1, ... , s_r \in \mathfrak{a}_1$ form a regular sequence of $\textnormal{m}_\infty^{\{\xi_v\}}$. Then we can conclude that $$\operatorname{dim}_{(R^{\{\xi_v\}})'}(\textnormal{m}_\infty^{\{\xi_v\}}) \ge 4|P|-1 +r +\operatorname{dim} \widehat{(\mathbb{T}_{\xi})_{\mathfrak{q}}} \ge 4|P|-1 +r+2[F:\mathbb{Q}].$$
	
	Using Taylor's Ihara avoidance trick as in \cite{PMIHES_2008__108__183_0}, one can show that  
	$\widehat{(R_{\infty}^{\{\xi_v\}})_{\mathfrak{q}_{\infty}^{\{\xi_v\}}}}$ is equidimensional of dimension $ 4|P|-1 +r+2[F:\mathbb{Q}]$ (see \cite[Section 4.2, Lemma 4.8.2]{pan2022fontaine}). As $(R^{\{\xi_v\}})'$ is finite free over $\widehat{(R_{\infty}^{\{\xi_v\}})_{\mathfrak{q}_{\infty}^{\{\xi_v\}}}}$, we have $$\operatorname{dim}_{(R^{\{\xi_v\}})'}(\textnormal{m}_\infty^{\{\xi_v\}})=\operatorname{dim}_{\widehat{(R_{\infty}^{\{\xi_v\}})_{\mathfrak{q}_{\infty}^{\{\xi_v\}}}}}(\textnormal{m}_\infty^{\{\xi_v\}})=4|P|-1 +r+2[F:\mathbb{Q}]= \operatorname{dim} \widehat{(R_{\infty}^{\{\xi_v\}})_{\mathfrak{q}_{\infty}^{\{\xi_v\}}}}.$$
	
	If the characters in $\{\xi_v\}$ are all non-trivial, by \cite[Proposition 4.2.3]{pan2022fontaine},  $\widehat{(R_{\infty}^{\{\xi_v\}})_{\mathfrak{q}_{\infty}^{\{\xi_v\}}}}$ is irreducible. From the previous equations, we know that $\textnormal{m}_\infty^{\{\xi_v\}} $ has full support on $\widehat{(R_{\infty}^{\{\xi_v\}})_{\mathfrak{q}_{\infty}^{\{\xi_v\}}}}$. By \cite[Lemma 4.8.4]{pan2022fontaine}, Theorem \ref{R=T} holds in this case. Similar to the proof of \cite[Theorem 4.1]{PMIHES_2008__108__183_0}, one can deduce the general case using Taylor's trick.
	
	From Theorem \ref{dimofT}, \cite[Corollary 12.5]{eisenbud2013commutative}, and discussions above, we can also conclude that for a nice prime $\mathfrak{q}$ of $(\mathbb{T}_{\xi})_{\mathfrak{m}_{\xi}}$, we have $\operatorname{dim} \widehat{(\mathbb{T}_{\xi})_{\mathfrak{q}}} =\operatorname{dim} (\mathbb{T}_{\xi})_{\mathfrak{q}}= 2[F:\mathbb{Q}]$. In other words, if an irreducible component of the big Hecke algebra contains a nice prime, then it is of dimension $1+2[F:\mathbb{Q}]$.
\end{rem}

\begin{rem}\label{remark}
	From the definitions and the "$R=\mathbb{T}$" theorem above, we can conclude the following two facts.
	
	1) If a minimal prime $\mathfrak{P} \in \operatorname{Spec} R^{\textnormal{ps}, \{\xi_v\}}$ is pro-modular, then any prime contained in the irreducible component defined by $\mathfrak{P}$ is pro-modular. We also say that an irreducible component of $\operatorname{Spec} R^{\textnormal{ps}, \{\xi_v\}}$ is pro-modular if its generic point is pro-modular.
	
	2) If an irreducible component of $\operatorname{Spec} R^{\textnormal{ps}, \{\xi_v\}}$ contains a nice prime, then it is pro-modular.
\end{rem}

\subsection{The ordinary case}\label{sec 3}
In this subsection, we recall some results on the deformation rings and Hecke algebras in the ordinary case.

Assume that $F$ is an abelian totally real number field over $\mathbb{Q}$ in which $p$ is unramified. Let $\Sigma$ be a finite set of primes of $F$ containing all places above $p$. Let $\chi: \operatorname{Gal}(F_{\Sigma}/F) \to \mathcal{O}^{\times}$ be a continuous totally odd character such that 

1) $\bar{\chi}$ can be extended to a character of $G_{\mathbb{Q}}$.

2) $\bar{\chi} \mid_{G_{F_v}}$ is not trivial for any $v \mid p$.

3) $\chi \mid_{G_{F_v}}$ is de Rham for any $v \mid p$.
\\

Let $(R^{\textnormal{ps,ord}}, T^{\textnormal{ord}})$ be the universal pseudo-deformation ring of $\operatorname{tr}(\bar{\rho}_0)$ of auxiliary type such that $ T^{\textnormal{ord}}\mid_{G_{F_v}}$ is reducible for any $v \mid p$, i.e. $y(\sigma, \tau)=0$ for any $\sigma, \tau \in G_{F_v}$, $v \mid p$.

Since $\bar{\chi} \mid_{G_{F_v}}$ is not trivial, we have $T^{\textnormal{ord}}\mid_{G_{F_v}}= \psi_{v,1}^{\textnormal{ord}}+\psi_{v,2}^{\textnormal{ord}}$ for some characters $ \psi_{v,1}^{\textnormal{ord}}, \psi_{v,2}^{\textnormal{ord}}: G_{F_v} \to (R^{\textnormal{ps,ord}})^\times$ which are liftings of $ \textbf{1}, \bar{\chi}\mid_{G_{F_v}}$ respectively, and actually, $\psi_{v,1}^{\textnormal{ord}} = a\mid_{G_{F_v}} $ and $\psi_{v,2}^{\textnormal{ord}} = d\mid_{G_{F_v}} $ in the sense of Definition \ref{pseudo}. Hence, $ \psi_{v,1}^{\textnormal{ord}}\mid_{I_v}$ induces a homomorphism $\mathcal{O}[[\mathcal{O}_{F_v}^\times(p)]] \to R^{\textnormal{ps,ord}}$ for any $v \mid p$ via local class field theory, where $\mathcal{O}_{F_v}^\times(p) $ is the $p$-adic completion of $ \mathcal{O}_{F_v}^\times$.

\begin{defn}\cite[Definition 5.3.1]{pan2022fontaine}
	Let $T: \operatorname{Gal}(F_\Sigma/F) \to R$ be a two-dimensional pseudo-representation over some ring $R$ such that for some place $v$, $T \mid_{G_{F_v}} = \psi_1 + \psi_2$ is a sum of two characters. We say $T$ is $\psi_1$-\textit{ordinary} if for any $\sigma, \tau \in G_{F_v}$, $\eta \in \operatorname{Gal}(F_\Sigma/F)$, $$
	T(\sigma \tau \eta)-\psi_1(\sigma)T(\tau \eta)-\psi_2(\tau)T(\sigma\eta)+\psi_1(\sigma)\psi_2(\tau)T(\eta)=0.$$
\end{defn}

\begin{prop}\label{psi1}
	Let $\rho : \operatorname{Gal}(F_\Sigma/F) \to  \operatorname{GL}_2(R)$ be a two-dimensional irreducible representation over some ring $R$ with trace $T$ such that $T \mid_{G_{F_v}} = \psi_1 + \psi_2$, a sum of two characters. Suppose $\rho\mid_{G_{F_v}}$ has the form $\begin{pmatrix}
		\psi_{1} & *\\
		0 & \psi_{2}
	\end{pmatrix}$. Then $T$ is $\psi_1$-ordinary. Conversely, if $R$ is a field and $T$ is $\psi_1$-ordinary, then after possibly enlarging $R$,
	we have $\rho\mid_{G_{F_v}} \cong \begin{pmatrix}
		\psi_{1} & *\\
		0 & \psi_{2}
	\end{pmatrix}$.
\end{prop}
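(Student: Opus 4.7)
My plan is to handle the two implications separately, both essentially by direct matrix computation.

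For the forward direction, I pick a basis realizing the given upper-triangular form, so $\rho(\sigma) = \left(\begin{smallmatrix} \psi_1(\sigma) & b_\sigma \\ 0 & \psi_2(\sigma) \end{smallmatrix}\right)$ for $\sigma \in G_{F_v}$, and write $\rho(\eta) = \left(\begin{smallmatrix} \alpha_\eta & \beta_\eta \\ \gamma_\eta & \delta_\eta \end{smallmatrix}\right)$ for $\eta \in \operatorname{Gal}(F_\Sigma/F)$. Expanding each of $T(\sigma\tau\eta), T(\tau\eta), T(\sigma\eta), T(\eta)$ as the trace of the corresponding matrix product, the upper-triangular shape of $\rho(\sigma)$ and $\rho(\tau)$ eliminates all dependence on $\beta_\eta$. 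Grouping terms in the $\psi_1$-ordinary expression by $\alpha_\eta, \gamma_\eta, \delta_\eta$: the $\alpha_\eta$-coefficient collapses to $\psi_1(\sigma\tau) - \psi_1(\sigma)\psi_1(\tau) = 0$, the $\delta_\eta$-coefficient to $\psi_2(\sigma\tau) - \psi_2(\sigma)\psi_2(\tau) = 0$, and the $\gamma_\eta$-coefficient to $\psi_1(\sigma)b_\tau + \psi_2(\tau)b_\sigma - \psi_1(\sigma)b_\tau - \psi_2(\tau)b_\sigma = 0$.

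For the converse I assume $R = K$ is a field and $T$ is $\psi_1$-ordinary. The first step is to show that $\rho|_{G_{F_v}}$ is $K$-reducible: since $T|_{G_{F_v}} = \psi_1 + \psi_2$ is the trace of the semisimple $K$-representation $\psi_1 \oplus \psi_2$ and both characters are already $K$-valued (so no Galois conjugation mixes them), the semisimplification of $\rho|_{G_{F_v}}$ over $K$ is $\psi_1 \oplus \psi_2$; in particular $\rho|_{G_{F_v}}$ admits a $G_{F_v}$-stable $K$-line. Choose a basis adapted to this line and write $\rho|_{G_{F_v}}(\sigma) = \left(\begin{smallmatrix} \chi_1(\sigma) & b_\sigma \\ 0 & \chi_2(\sigma) \end{smallmatrix}\right)$ with $\{\chi_1, \chi_2\} = \{\psi_1, \psi_2\}$. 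If $\chi_1 = \psi_1$ we are done; if $\psi_1 = \psi_2$ on $G_{F_v}$ the two orderings coincide and the conclusion is immediate.

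The only remaining case is $\chi_1 = \psi_2$, $\chi_2 = \psi_1$ with $\psi_1 \neq \psi_2$ on $G_{F_v}$. Rerunning the forward-direction computation in this new basis (with the diagonal entries swapped), the $\alpha_\eta$- and $\delta_\eta$-coefficients in the $\psi_1$-ordinary expression still vanish identically, while the $\gamma_\eta$-coefficient becomes $(\psi_2(\sigma)-\psi_1(\sigma))b_\tau - (\psi_2(\tau)-\psi_1(\tau))b_\sigma$. Global irreducibility of $\rho$ furnishes some $\eta \in \operatorname{Gal}(F_\Sigma/F)$ with $\gamma_\eta \neq 0$ (otherwise the top line is globally stable), so the $\psi_1$-ordinary hypothesis forces $(\psi_2(\sigma)-\psi_1(\sigma))b_\tau = (\psi_2(\tau)-\psi_1(\tau))b_\sigma$ for every $\sigma, \tau \in G_{F_v}$. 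Fixing $\sigma_0 \in G_{F_v}$ with $\psi_2(\sigma_0) \neq \psi_1(\sigma_0)$ and setting $C := b_{\sigma_0}/(\psi_2(\sigma_0)-\psi_1(\sigma_0))$, this yields $b_\tau = C(\psi_2(\tau)-\psi_1(\tau))$ for all $\tau \in G_{F_v}$, which is precisely the coboundary trivializing the extension. Conjugating by $\left(\begin{smallmatrix} 1 & -C \\ 0 & 1 \end{smallmatrix}\right)$ splits $\rho|_{G_{F_v}}$, and a final swap of basis vectors places $\psi_1$ in the upper-left entry.

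The main technical point is isolating the $\gamma_\eta$-coefficient in the $\psi_1$-ordinary expansion and invoking global irreducibility of $\rho$ to supply an $\eta$ with $\gamma_\eta \neq 0$; both steps are routine. The "enlarging $R$" hedge in the statement is a convenient safeguard for finding the stable line, but in the present argument $C$ already lies in $R$, so no enlargement is actually needed.
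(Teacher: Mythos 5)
Your proof is correct and is essentially the standard direct computation behind the cited result (the paper itself gives no argument, only a reference to Pan's Lemma 5.3.2, whose proof proceeds by the same kind of expansion of the ordinarity identity in a triangular basis). The only quibble is a sign: with $b_\tau = C(\psi_2(\tau)-\psi_1(\tau))$ you should conjugate by $\left(\begin{smallmatrix}1 & C\\ 0 & 1\end{smallmatrix}\right)$ rather than $\left(\begin{smallmatrix}1 & -C\\ 0 & 1\end{smallmatrix}\right)$ to kill the off-diagonal entry.
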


\begin{proof}
	See \cite[Lemma 5.3.2]{pan2022fontaine}.
\end{proof}

  Define the \textit{Iwasawa algebra} $$
\Lambda_F := \widehat{\bigotimes}_{v \mid p} \mathcal{O}[[\mathcal{O}_{F_v}^\times(p)]], $$
and we get a natural map $\Lambda_F \to R^{\textnormal{ps,ord}} $ induced by the characters $ \psi_{v,1}^{\textnormal{ord}}$. The following result is the ordinary Fontaine-Mazur conjecture proved in \cite{skinner1999residually} and \cite[Section 5]{pan2022fontaine}, which is used to produce pro-modular primes in our setting.

\begin{prop}\label{ordinary}
	(1) $ R^{\textnormal{ps,ord}}$ is a finite $ \Lambda_F$-algebra. 
	
	(2) For any maximal ideal $\mathfrak{p}$ of $R^{\textnormal{ps,ord}}[\frac{1}{p}]$, we denote the associated semi-simple representation $\operatorname{Gal}(F_{\Sigma}/F) \to \operatorname{GL}_2(k(\mathfrak{p}))$ by $\rho(\mathfrak{p})$. Assume that
	
	\quad i) $ \rho(\mathfrak{p})$ is irreducible.
	
	\quad ii) For any $v \mid p$, $\rho(\mathfrak{p}) \mid_{G_{F_v}} \cong  \begin{pmatrix}
		\chi_{v,1} & *\\
		0 & \chi_{v,2}
	\end{pmatrix}$ such that $\chi_{v,1} $ is de Rham and has strictly less Hodge-Tate number than $\chi_{v,2} $ for any embedding $F_v \hookrightarrow \overline{\mathbb{Q}}_p$.
	
	Then $ \rho(\mathfrak{p})$ comes from a twist of Hilbert modular form.
\end{prop}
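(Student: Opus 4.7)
My plan for part (1) is to show that the fiber $R^{\textnormal{ps,ord}}/\mathfrak{m}_{\Lambda_F}R^{\textnormal{ps,ord}}$ is Artinian, after which topological Nakayama promotes this to finite generation as a $\Lambda_F$-module. By construction, this fiber pro-represents the functor of ordinary pseudo-deformations of $1+\bar\chi$ (of auxiliary type) for which both characters $\psi_{v,1}^{\textnormal{ord}}|_{I_v}$ are trivial for every $v \mid p$. Using that $\bar\chi|_{G_{F_v}} \ne 1$, the two characters $\psi_{v,1}^{\textnormal{ord}}$ and $\psi_{v,2}^{\textnormal{ord}}$ are canonically labelled by their residues $1,\bar\chi|_{G_{F_v}}$, so the inertia condition is unambiguous. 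Translating the problem via the GMA machinery of Proposition \ref{GMA}, one sees that tangent vectors in this fiber are classified by a Selmer-type subgroup of $H^1(F_\Sigma/F,\operatorname{ad}^0\bar\rho_0)$ cut out by requiring the diagonal inertia characters at each $v \mid p$ to be trivial; a Tate–Poitou/global Euler characteristic count (the local condition at each $v \mid p$ reduces the rank by roughly $[F_v:\mathbb{Q}_p]$) shows this tangent space is finite-dimensional over $\mathbb{F}$. Combined with a standard obstruction bound this gives that $R^{\textnormal{ps,ord}}/\mathfrak{m}_{\Lambda_F}R^{\textnormal{ps,ord}}$ is a quotient of a finitely-generated $\mathcal{O}$-algebra, hence Artinian, as needed.

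For part (2), the plan is to combine (1) with Hida theory and the ``$R=\mathbb{T}$'' input of Theorem \ref{R=T}. Let $\mathbb{T}^{\textnormal{ord}}$ denote the ordinary Hida Hecke algebra obtained as an idempotent cut of $\mathbb{T}_\xi$; by Hida's control theorem it is a finite $\Lambda_F$-algebra, and residual modularity of $\bar\rho_0=1\oplus\bar\chi$ provides a maximal ideal. The ordinarity condition on $T^{\textnormal{ord}}$ matches the one defining $\mathbb{T}^{\textnormal{ord}}$, so the universal property yields a natural surjection $R^{\textnormal{ps,ord}}\twoheadrightarrow \mathbb{T}^{\textnormal{ord}}_{\mathfrak{m}_\xi}$, with both sides now finite over $\Lambda_F$ by (1). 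To prove that an irreducible characteristic-zero point $\mathfrak{p}$ of $R^{\textnormal{ps,ord}}$ corresponds to a Hilbert modular form, I would first descend $\mathfrak{p}$ to a one-dimensional prime $\mathfrak{q}$ containing $p$ inside $\overline{\{\mathfrak{p}\}}\subseteq\operatorname{Spec}R^{\textnormal{ps,ord}}$, noting that the ordinary triangular form $\rho(\mathfrak{p})|_{G_{F_v}}\cong\begin{pmatrix}\chi_{v,1}&*\\0&\chi_{v,2}\end{pmatrix}$ with $\chi_{v,1}$ de Rham of strictly smaller Hodge–Tate weight forces $\chi_{v,1}$ to have infinite order, so the hypothesis (3) of Proposition \ref{equiv} is satisfied; after choosing $\mathfrak{q}$ carefully so that its specialization is still irreducible and extends non-trivially as a $\mathrm{mod}\,\mathfrak{m}_A$ reduction, $\mathfrak{q}$ is a nice prime. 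Theorem \ref{R=T} then identifies $(R^{\textnormal{ps},\{\xi_v\}})_{\mathfrak{q}^{\textnormal{ps}}}$ with $(\mathbb{T}_\xi)_{\mathfrak{q}}$ up to nilpotents, which means $\mathfrak{q}$, and a fortiori the generic point $\mathfrak{p}$ of the irreducible component through $\mathfrak{q}$, is pro-modular in the sense of Definition \ref{nice}.

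Finally, to conclude that the pro-modular point $\mathfrak{p}$ genuinely comes from a Hilbert modular form, I would apply Hida's classicality criterion on $\mathbb{T}^{\textnormal{ord}}_{\mathfrak{m}_\xi}$: an ordinary $p$-adic system of eigenvalues that is de Rham with distinct Hodge–Tate weights at each $v \mid p$ arises from a classical ordinary Hilbert modular eigenform, possibly after the twist that matches the fixed determinant $\chi$ (this twist is precisely the ``twist'' in the statement, since our auxiliary condition fixes $\det = \chi$ while classical Hecke eigensystems a priori have determinant $\chi\epsilon$-related). This produces the Hilbert modular form whose associated Galois representation is $\rho(\mathfrak{p})$ up to twist.

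The main obstacle, in my view, is part (2), and specifically producing a nice prime $\mathfrak{q}$ in the closure of $\mathfrak{p}$: one must simultaneously arrange irreducibility of $\rho(\mathfrak{q})$, the non-split upper-triangular residual shape, the dihedral-exclusion condition, and the local condition at $\Sigma\setminus\Sigma_p$. The finiteness statement in (1) is the key lever here, because it reduces the search for such a $\mathfrak{q}$ to a finite-type problem over $\Lambda_F$, where Corollary \ref{innovation'} together with generic-fiber density of irreducible points (analogous to \cite[Section 7]{pan2022fontaine}) can be deployed to guarantee the existence of $\mathfrak{q}$. Without the finiteness of part (1), the geometry of $\operatorname{Spec}R^{\textnormal{ps,ord}}$ would not be constrained enough to locate such a nice prime.
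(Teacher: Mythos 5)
Be aware first that the paper does not prove this proposition at all: its ``proof'' is a one-line citation to \cite[Theorem 5.1.1]{pan2022fontaine}, which is one of the deepest external inputs of the whole article (essentially a full modularity theorem for ordinary, residually reducible representations, occupying most of Section 5 of Pan's paper). You are therefore attempting to reprove a black box, and your sketch, while pointed in the right general direction, has a genuine gap at the crux of part (1). You argue that the fiber $R^{\textnormal{ps,ord}}/\mathfrak{m}_{\Lambda_F}R^{\textnormal{ps,ord}}$ has finite-dimensional tangent space and conclude that it is ``a quotient of a finitely-generated $\mathcal{O}$-algebra, hence Artinian.'' This is a non sequitur: a finite-dimensional tangent space only exhibits the fiber as a quotient of $\mathbb{F}[[x_1,\dots,x_n]]$, i.e.\ Noetherian, not Artinian; every deformation ring in this paper has finite tangent space and positive Krull dimension. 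The entire content of part (1) is that this fiber has dimension $0$, and that cannot be extracted from a Selmer-group count. In Pan's argument it requires a separate analysis of the reducible locus (class field theory) and of the irreducible locus, where one ultimately needs modularity/Hecke-theoretic finiteness to exclude positive-dimensional families with trivial inertial characters at $p$.

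For part (2) your outline (Hida theory, locating a nice prime in the closure of $\mathfrak{p}$, Theorem \ref{R=T}, then classicality) follows the same architecture as Pan's proof, but the steps you defer are exactly where the work lies, and one inference is incorrect: the de Rham condition on $\chi_{v,1}$ at the characteristic-zero point $\mathfrak{p}$ does not ``force $\chi_{v,1}$ to have infinite order'' (it may have Hodge--Tate weight $0$ and be of finite order), and in any case Proposition \ref{equiv}(3) must be verified for the characteristic-$p$ prime $\mathfrak{q}$, where the relevant character is $\psi_{v,1}^{\textnormal{ord}}\bmod\mathfrak{q}$; its infinite order is arranged by choosing a component surjecting onto $\operatorname{Spec}\Lambda_F$ (as in Lemma \ref{dense}), not by the de Rham hypothesis at $\mathfrak{p}$. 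There is also a circularity risk: you propose to produce the nice prime via density arguments of the kind that, in this paper, are only established later (Theorem \ref{pro}) using Proposition \ref{ordinary} itself as an input, so any honest proof along your lines must be organized to avoid that loop, as Pan's Section 5 does.
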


\begin{proof}
	See \cite[Theorem 5.1.1]{pan2022fontaine}.
\end{proof}

\begin{lem}\label{dense}
	Let $C^{\textnormal{ord}}$ be an irreducible component of $ R^{\textnormal{ps,ord}}$ of dimension at least $1+[F: \mathbb{Q}]$.
	 Let $ C^{\textnormal{ord,aut}}$ be the set of regular de Rham primes in $C^{\textnormal{ord}}$. Then $C^{\textnormal{ord,aut}} $ is dense in $C^{\textnormal{ord}}$. Here a prime $\mathfrak{q}$ is called regular de Rham prime if
	
	\quad i) $p \notin \mathfrak{q}$ and $R^{\textnormal{ps,ord}}/\mathfrak{q} $ is one-dimensional.
	
	\quad ii) The semi-simple representation $\rho(\mathfrak{q}): \operatorname{Gal}(F_{\Sigma}/F) \to \operatorname{GL}_2(k(\mathfrak{q}))$ (in the sense of Definition \ref{pseudo}) is irreducible.
	
	\quad iii) For any $v \mid p$, $\rho(\mathfrak{q}) \mid_{G_{F_v}} \cong  \begin{pmatrix}
		\chi_{v,1} & *\\
		0 & \chi_{v,2}
	\end{pmatrix}$ such that $\chi_{v,1} $ is de Rham and has strictly less Hodge-Tate number than $\chi_{v,2} $ for any embedding $F_v \hookrightarrow \overline{\mathbb{Q}}_p$.
\end{lem}

\begin{proof}
	The proof is the same as \cite[Corollary 7.2.3]{pan2022fontaine}. We sketch it here.
	
	From our assumption and Proposition \ref{ordinary}, we know that the morphism $C^{\textnormal{ord}} \to \operatorname{Spec} \Lambda_F$. 
	
	  For $v \mid p$, let $ \psi_{v,1}^{\textnormal{ord}}, \psi_{v,2}^{\textnormal{ord}}: G_{F_v} \to (R^{\textnormal{ps,ord}})^\times$ be the liftings of $1, \bar{\chi}\mid_{G_{F_v}}$ respectively. By Proposition \ref{psi1}, there exists $n_v \in \{1,2\}$ such that for any $\mathfrak{q} \in C_1^{\textnormal{ord}}$, we get $\rho(\mathfrak{q}) \mid_{G_{F_v}} \cong  \begin{pmatrix}
		\psi_{v,n_v}^{\textnormal{ord}} ~\operatorname{mod}~\mathfrak{q}& *\\
		0 & \psi_{v,3-n_v}^{\textnormal{ord}}~\operatorname{mod}~\mathfrak{q}
	\end{pmatrix}$. 
	
	Note that the map $\Lambda_F \to R^{\textnormal{ps,ord}} $ is induced by the characters $ \psi_{v,1}^{\textnormal{ord}}$. Hence, we know that the image of $ C^{\textnormal{ord,aut}}$ is dense in $\operatorname{Spec} \Lambda_F$, and hence dense in $C^{\textnormal{ord}}$.
\end{proof}


	

\subsection{The pro-modularity of universal deformation rings}
In  this subsection, we will prove our pro-modularity statements.

Keep the notations and assumptions on the finite set $\Sigma$ and the character $\chi$ in the previous two subsections. In this subsection, we further assume that $\bar{\chi}\mid_{G_{F_v}} \ne \omega_p^{\pm 1}$ for any $v\mid p$, and $[F:\mathbb{Q}] \ge 7|\Sigma \setminus \Sigma_p|+2.$

\begin{defn}\label{pro-modular}
	We say that a prime of $ R_x^{\textnormal{aux}, \{\xi_v\}}$ is pro-modular if its inverse image of the natural map $ R^{\textnormal{ps}, \{\xi_v\}} \to R_x^{\textnormal{aux}, \{\xi_v\}}$ is pro-modular. We say that a prime of $ R_x^\textnormal{aux}$ is pro-modular if it is the inverse image of some pro-modular prime of some $ R_x^{\textnormal{aux}, \{\xi_v\}}$ via the natural surjection $ R_x^\textnormal{aux} \twoheadrightarrow R_x^{\textnormal{aux}, \{\xi_v\}}$. 
\end{defn}

\begin{thm}\label{pro}
	For any irreducible component of $R_\textnormal{aux}^\textnormal{ps} $ of dimension at least $1+2[F:\mathbb{Q}]$, it is pro-modular.
\end{thm}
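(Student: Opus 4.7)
The plan is to combine the innovation Corollary \ref{innovation'} with Corollary \ref{red-one}, Proposition \ref{equiv}, and Theorem \ref{R=T}. By Remark \ref{remark}(2), it suffices to exhibit a nice prime (in the sense of Definition \ref{nice}) in each irreducible component $C$ of $\operatorname{Spec} R_\textnormal{aux}^\textnormal{ps}$ of dimension at least $1+2[F:\mathbb{Q}]$. First I fix such a $C$, cut out by a minimal prime $\mathfrak{P}$; by Proposition \ref{char}, $\mathfrak{P}$ belongs to some system $\{\xi_v\}_{v \in \Sigma \setminus \Sigma_p}$, so $C$ descends to a component of $\operatorname{Spec} R^{\textnormal{ps},\{\xi_v\}}$ of the same dimension. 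Reducing modulo $\pi$ and using Proposition \ref{RX1}, I land in the $\{\xi_v\}$-independent ring $R^{\textnormal{ps},\{\xi_v\}}/\pi$; let $R$ be a CNL domain quotient containing the image of $C$, so $\dim R \geq 2[F:\mathbb{Q}]$.

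The central step is to apply Corollary \ref{innovation'} to the universal pseudo-representation $T_\textnormal{aux}$ on $R$ with a finite set $S \subset G := \operatorname{Gal}(F_\Sigma/F)$ containing, for each $v \in \Sigma \setminus \Sigma_p$, seven carefully chosen elements: a tame inertia generator $\sigma_v$, a Frobenius lift $\varphi_v$, selected products among them, and conjugates by a complex conjugation $c$. They are chosen so that the $y$-values at elements of $S$ completely determine the local restriction at $v$ of any representation built from $T_\textnormal{aux}$ via Definition \ref{construction}. The corollary yields a quotient $R'$ of $R$ together with a partition $S = S_1 \amalg S_2$, and guarantees $\dim R' \geq \dim R - |S| \geq 2[F:\mathbb{Q}] - 7|\Sigma \setminus \Sigma_p|$. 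The hypothesis $[F:\mathbb{Q}] \geq 7|\Sigma \setminus \Sigma_p| + 2$ then forces $\dim R' \geq 4$; since $\pi$ is already zero in $R'$, Corollary \ref{red-one} produces a one-dimensional prime $\mathfrak{r}$ of $R'$ at which $T_\textnormal{aux} \bmod \mathfrak{r}$ is irreducible.

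It remains to verify that the lift $\mathfrak{q}$ of $\mathfrak{r}$ to $R^{\textnormal{ps},\{\xi_v\}}$ is a nice prime. Using Definition \ref{construction} together with Proposition \ref{innovation}(1) to pick appropriate $\sigma_0,\tau_0$, I build an integral representation $\rho(\mathfrak{q})^o \colon G \to \operatorname{GL}_2(A)$ over the normal closure $A$ of $R'/\mathfrak{r}$. Its irreducibility on the generic fibre is immediate, and the residual $\bar{\rho}_b$ is upper triangular because $T_\textnormal{aux}$ reduces to $1+\bar\chi$, and non-split because $\rho(\mathfrak{q})^o$ is itself irreducible. Condition (4) of Definition \ref{nice}, the local-global match at $\Sigma \setminus \Sigma_p$, is arranged by the construction of $S$: for $\theta \in S_1$ the vanishing $y(\theta,-) = 0$ in $R'$ forces the reducible shape of $\rho(\mathfrak{q})^o|_{G_{F_v}}$ demanded by $\bar{\rho}_b$, while for $\theta \in S_2$ the ratios $y(\theta',-)/y(\theta'',-)$ become units in $A$ by Corollary \ref{innovation'}(c) combined with Proposition \ref{innovation}(2), forcing the analogous local compatibility. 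The dihedral condition (3) is then secured by Proposition \ref{equiv}: a further specialization to an ordinary branch at some $v\mid p$ where $\bar\chi|_{G_{F_v}}$ is non-trivial (guaranteed by the standing assumption $\bar\chi|_{G_{F_v}} \neq 1, \omega_p^{\pm 1}$), with a diagonal character of infinite order, places us in case (3) of that proposition; the extra specialization costs only a bounded amount of Krull dimension and fits within the surplus. Theorem \ref{R=T} then identifies $\mathfrak{q}$ with a nice prime of $(\mathbb{T}_\xi)_{\mathfrak{m}_\xi}$, and Remark \ref{remark}(1) upgrades this to pro-modularity of the entire component $C$.

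The main obstacle, I expect, will be the delicate calibration of $S$ so that a single application of Corollary \ref{innovation'} produces a quotient $R'$ simultaneously enforcing (i) enough vanishing of $y(\theta,-)$ for $\theta \in S_1$ to pin down the correct inertial structure at every place of $\Sigma \setminus \Sigma_p$, (ii) enough integral unit control for $\theta \in S_2$ to handle the remaining local matching, and (iii) Krull dimension at least $2$ so that Corollary \ref{red-one} can be invoked. The value seven per bad place is chosen precisely to balance these three demands against the bound $[F:\mathbb{Q}] \geq 7|\Sigma \setminus \Sigma_p| + 2$; once this calibration is in place, the rest of the argument is a direct concatenation of the tools already developed in Sections 2 and 4.
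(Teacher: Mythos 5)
There is a genuine gap, and it sits at the heart of the argument: you never establish that your one-dimensional prime $\mathfrak{q}$ is \emph{pro-modular}, i.e.\ that it actually lies in the image of $\operatorname{Spec}(\mathbb{T}_{\xi})_{\mathfrak{m}_{\xi}} \to \operatorname{Spec} R^{\textnormal{ps},\{\xi_v\}}$. By Definition \ref{nice}, a nice prime of $R^{\textnormal{ps},\{\xi_v\}}$ is by definition one that \emph{comes from} a nice prime of the Hecke algebra, so exhibiting niceness requires an automorphy input before anything else. Your final sentence ``Theorem \ref{R=T} then identifies $\mathfrak{q}$ with a nice prime of $(\mathbb{T}_\xi)_{\mathfrak{m}_\xi}$'' is circular: that theorem takes a nice prime of $(\mathbb{T}_{\xi})_{\mathfrak{m}_{\xi}}$ as its hypothesis and cannot be used to put $\mathfrak{q}$ into the Hecke algebra in the first place. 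The paper supplies the missing input by a first stage you omit entirely: it intersects the component $C$ with the ordinary locus $\operatorname{Spec} R^{\textnormal{ps,ord}}$ (costing $|\Sigma_p|=[F:\mathbb{Q}]$ dimensions, each ordinarity condition being principal), extracts an ordinary component $C_1^{\textnormal{ord}}$ of dimension $[F:\mathbb{Q}]+1$ surjecting onto $\operatorname{Spec}\Lambda_F$ whose regular de Rham points are dense and automorphic by Proposition \ref{ordinary}; hence \emph{every} prime of $C_1^{\textnormal{ord}}$ is pro-modular, and all subsequent quotients are taken inside $R_\textnormal{aux}^\textnormal{ps}/(\mathfrak{Q},\pi)$ where $\mathfrak{Q}$ is the generic point of $C_1^{\textnormal{ord}}$, so the final one-dimensional prime inherits pro-modularity. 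Your proposal to bring in ordinarity only at the very end, merely to check the dihedral condition via Proposition \ref{equiv}, gets the logical order backwards: ordinarity must be imposed first because it is the source of modularity, not a cosmetic condition.

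A secondary but real problem is your single application of Corollary \ref{innovation'} with ``seven elements per place.'' As stated, that corollary only controls the left slot $y(\theta,\alpha)$; to normalize the upper-right entries of $\rho(\mathfrak{q})^o|_{G_{F_v}}$ you also need control of $y(\alpha,\theta)$, which requires a second, mirror-image application (with $u'$ in place of $u$) and then a reconciliation step killing $y(\theta_i,\theta_j)$ when both $S_2$ and $S_2'$ are nonempty. The paper's count $7|\Sigma\setminus\Sigma_p|$ arises as $3|\Sigma\setminus\Sigma_p|$ (cutting by $a(\sigma_v)-1$, $a(\operatorname{Frob}_v)-1$, $d(\operatorname{Frob}_v)-1$) plus $2|\Sigma\setminus\Sigma_p|$ for each of the two applications of the corollary to $S=\{\sigma_v,\operatorname{Frob}_v\}$, with the final principal-ideal cut absorbed by the $+1$ gains when $S_2,S_2'\neq\varnothing$ --- not from a single set of seven elements per place. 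Your dimension bookkeeping happens to land above $2$, but the structure of the reduction to the three cases a), b), c) needed for the local-global matching at $v\in\Sigma\setminus\Sigma_p$ cannot be reproduced by one pass of the corollary.
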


\begin{proof}
	Let $C$ be an irreducible component of $R_\textnormal{aux}^\textnormal{ps} $ of dimension at least $1+2[F:\mathbb{Q}]$. Consider the corresponding generic point $\mathfrak{P}$ of $R_\textnormal{aux}^\textnormal{ps} $, and we suppose that it belongs to $\{\xi_v\}$. We claim that $\mathfrak{P}$ is pro-modular.
	
	For convenience for readers, we briefly introduce our strategy here. By Remark \ref{remark}, we just need to find a nice prime containing $\mathfrak{P}$. In other words, we need to find a pro-modular prime which satisfies the conditions we list in (2) of Definition \ref{nice}. Hence, our proof can be divided into two parts. In the first part, following \cite[Section 7]{pan2022fontaine}, we use Pan's results given in the previous section to find enough pro-modular primes from ordinary points. (By the way, there is also a similar step in the proof of Deo's big $R=\mathbb{T}$ theorem. See \cite[Proposition 3.4]{deo2023density}.) In the second part, we use Corollary \ref{innovation'} to find a nice prime, which is a main innovation of this paper.\\
	
    Write $C^{\textnormal{ord}}=\operatorname{Spec} R^{\textnormal{ps,ord}} \cap C$.
	
	\begin{lem}\label{cord}
		$\operatorname{dim} C^{\textnormal{ord}} \ge [F:\mathbb{Q}]+1$.
	\end{lem} 
	
	\begin{proof}
		For any $v \mid p$, let $R_v^\textnormal{ps}$ be the universal pseudo-deformation ring of the pseudo-representation of $G_{F_v} $ lifting $1+\bar{\chi}\mid_{G_{F_v}}$ of auxiliary type, i.e. of determinant $\chi \mid_{G_{F_v}}$. Let $R_v^{\textnormal{ps,ord}}$ be the quotient of $R_v^\textnormal{ps}$ parametrizing all reducible liftings. As $\bar{\chi} \ne 1, \omega_p^{\pm 1}$, we know that the kernel of $ R_v^\textnormal{ps} \twoheadrightarrow R_v^{\textnormal{ps,ord}}$ is a principal ideal by \cite[Corollary B.20]{pavskunas2013image}. Write $R_p^\textnormal{ps}=\widehat{\bigotimes}_{\mathcal{O}} R_v^\textnormal{ps}$ and $R_p^{\textnormal{ps,ord}}=\widehat{\bigotimes}_{\mathcal{O}} R_v^{\textnormal{ps,ord}}$. Then by universal property, we get $R^{\textnormal{ps,ord}} =R_\textnormal{aux}^\textnormal{ps} \otimes_{R_p^\textnormal{ps}} R_p^{\textnormal{ps,ord}}$. Thus, $\operatorname{dim} C^{\textnormal{ord}} \ge \operatorname{dim} C -  |\Sigma_p | \ge 1+ 2[F:\mathbb{Q}]-[F:\mathbb{Q}]=[F:\mathbb{Q}]+1$.
	\end{proof}

	Now we can choose an irreducible component $C_1^{\textnormal{ord}} \subset C^{\textnormal{ord}}$ of $\operatorname{Spec} R^{\textnormal{ps,ord}}$ of dimension $1+[F: \mathbb{Q}]$, and we let $\mathfrak{Q}$ be the generic point of $C_1^{\textnormal{ord}}$. For any $v \in \Sigma \setminus \Sigma_p$, we fix a lifting $\sigma_v $ of the generator of the $\mathbb{Z}_p$-quotient of $I_v$ and a lifting $\operatorname{Frob}_v$ of the Frobenius element. Write $S=\{\sigma_v, \operatorname{Frob}_v:~v \in \Sigma \setminus \Sigma_p\}$. Clearly, $|S|=2|\Sigma \setminus \Sigma_p|$.
	
	Let $I$ be an ideal of $R_\textnormal{aux}^\textnormal{ps}/(\mathfrak{Q}, \pi)$ generated by $$\{a(\sigma_{v})-1, a(\operatorname{Frob}_{v})-1, d(\operatorname{Frob}_{v})-1,~ v\ \in \Sigma \setminus \Sigma_p\}.$$ 
	We consider a minimal prime $\mathfrak{u}$ of $I$. By Krull's principal ideal theorem, $(R_\textnormal{aux}^\textnormal{ps}/(\mathfrak{Q}, \pi))/\mathfrak{u}$ is of dimension at least $[F: \mathbb{Q}]-3|\Sigma \setminus \Sigma_p|$. For simplicity, we write such a domain as $R_0$.\\
	
	To find a nice prime, we consider the following steps.\\
	
	\textit{Step} I. We start from the CNL domain $R_0$.
	
	By Corollary \ref{innovation'}, we can find a partition of $S = S_1 \amalg S_2$, a positive integer $n>1$ prime to $p$ and a CNL domain $R_0'$ satisfying the following conditions.
	
	1) $R_0'$ is a quotient of $R_0$.
	
	2) For any $\theta \in S_1$, we have $y(\theta, \alpha)=0$ for any $\alpha \in \operatorname{Gal}(F_\Sigma/F)$ in $R_0'$. For any $\theta', \theta'' \in S_2$, we have $y(\theta', \alpha)^n=y(\theta'', \alpha)^n$ for any $\alpha \in \operatorname{Gal}(F_\Sigma/F)$ in $R_0'$.
	
	3) For any $\alpha \in \operatorname{Gal}(F_\Sigma/F), \theta \in S_2$, either $ y(\theta, \beta)=0$ for any $\beta \in \operatorname{Gal}(F_\Sigma/F)$ or $u(\alpha, \theta)$ is well-defined and integral over $R_0'$.
	
	4) We have $\operatorname{dim} R_0' \ge \operatorname{dim} R_0 - |S|$. If $S_2$ is not empty, then further $\operatorname{dim} R_0' \ge \operatorname{dim} R_0 - |S|+1$.\\
	
	\textit{Step} II. We start from the CNL domain $R_0'$.
	
	Similar to \textit{Step} I, we can find a partition of $S = S_1' \amalg S_2'$, a positive integer $n'>1$ prime to $p$ and a CNL domain $R_0''$ satisfying the following conditions.
	
	1') $R_0''$ is a quotient of $R_0'$.
	
	2') For any $\theta \in S_1'$, we have $y(\alpha, \theta)=0$ for any $\alpha \in \operatorname{Gal}(F_\Sigma/F)$ in $R_0''$. For any $\theta', \theta'' \in S_2'$, we have $y(\alpha, \theta')^{n'}=y(\alpha, \theta'')^{n'}$ for any $\alpha \in \operatorname{Gal}(F_\Sigma/F)$ in $R_0''$.
	
	3') For any $\alpha \in \operatorname{Gal}(F_\Sigma/F), \theta \in S_2'$, either $ y(\beta, \theta)=0$ for any $\beta \in \operatorname{Gal}(F_\Sigma/F)$ or $u'(\alpha, \theta)$ is well-defined and integral over $R_0''$.
	
	4') We have $\operatorname{dim} R_0'' \ge \operatorname{dim} R_0' - |S|$. If $S_2'$ is not empty, then further $\operatorname{dim} R_0'' \ge \operatorname{dim} R_0' - |S|+1$.\\
	 
	 \textit{Step} III. We consider the domain $R_0''$.
	 
	 If both $S_2$ and $S_2'$ are not empty, then we fix an element $\theta_{i} \in S_2$ and an element $\theta_{j} \in S_2'$. Consider a prime $\mathfrak{p}_{ij}$ of $R_0''$ of the ideal $(y(\theta_{i}, \theta_{j}))$ and let $R''=R_0''/\mathfrak{p}_{ij}$. By Proposition \ref{u}, we know that either $y(\theta_{i}, \alpha)=0$ or $y(\beta, \theta_{j})=0$ for all $\alpha, \beta \in \operatorname{Gal}(F_\Sigma/F)$. By Krull's principal ideal theorem, we have $\operatorname{dim} R'' \ge \operatorname{dim} R_0'' -1$.
	 
	 If not, we take $R''=R_0''$.\\
	 
	 In conclusion, one of the following three assertions holds for $R''$.
	 
	 a) $ y(\theta, \alpha) =y(\alpha, \theta)=0$ for all $\theta \in S$ and $\alpha \in \operatorname{Gal}(F_\Sigma/F)$.
	 
	 b) There exists an element $\theta_{i} \in S$ such that either $y(\theta, \alpha)=0$ or $y(\theta, \alpha)^{n}=y(\theta_i, \alpha)^{n}$ for all $\theta \in S$ and $\alpha \in \operatorname{Gal}(F_\Sigma/F)$, and $y(\alpha, \theta) =0 $ for all $\theta \in S$ and $\alpha \in \operatorname{Gal}(F_\Sigma/F)$.
	 
	 c) There exists an element $\theta_{j} \in S$ such that either $y(\alpha, \theta)=0$ or $y(\alpha, \theta)^{n'}=y(\alpha, \theta_j)^{n'}$ for all $\theta \in S$ and $\alpha \in \operatorname{Gal}(F_\Sigma/F)$, and $y(\theta, \alpha) = 0 $ for all $\theta \in S$ and $\alpha \in \operatorname{Gal}(F_\Sigma/F)$.
	 
	 In particular, we have $\operatorname{dim} R'' \ge [F: \mathbb{Q}]-7|\Sigma \setminus \Sigma_p| \ge 2$.
	 
	\begin{lem}
		There exists a nice prime containing $\mathfrak{P}$.
	\end{lem}
	
	\begin{proof}
		By Proposition \ref{ordinary}, for any prime $\mathfrak{q}_1 \in  C_1^{\textnormal{ord,aut}}$, the subset of $C_1^{\textnormal{ord}} $ consisting of all regular de Rham primes as defined in Lemma \ref{dense}, we know that $\rho(\mathfrak{q}_1)$ comes from a twist of Hilbert modular form. By the density result in Lemma \ref{dense}, the image of any prime of $C_1^{\textnormal{ord}}$ in $\operatorname{Spec} R^{\textnormal{ps},\{\xi_v\}}$ is pro-modular.  
		
		By Corollary \ref{red-one}, we can choose an irreducible one-dimensional prime $\mathfrak{r}'$ of $R''$. We have shown that $\mathfrak{r}' $ is pro-modular. Now we claim that $\mathfrak{r}' $ is actually a nice prime in the sense of Definition \ref{nice}.
		
		Let $A$ be the normal closure of $R''/\mathfrak{r}'$. Then we obtain a continuous irreducible representation $\rho(\mathfrak{r}') : \operatorname{Gal}(F_{\Sigma}/F) \to \operatorname{GL}_2(A)$ associated to the pseudo-deformation in the following form $$
		\rho(\mathfrak{r}')(\sigma)=\begin{pmatrix}
			a(\sigma) & \frac{y(\sigma, \tau_{\mathfrak{r}'})}{y(\sigma_{\mathfrak{r}'}, \tau_{\mathfrak{r}'})}\\
			y(\sigma_{\mathfrak{r}'}, \sigma)& d(\sigma)
	\end{pmatrix}, ~y(\sigma_{\mathfrak{r}'}, \tau_{\mathfrak{r}'}) \ne 0.$$
		
		Now we verify the conditions in Definition \ref{nice} for all the three cases. 
		
		In case a), we can take $\rho(\mathfrak{r}')^o=\rho(\mathfrak{r}')$. From our construction, the first three conditions are clear (using 3) of Proposition \ref{equiv}). By Proposition \ref{RX1}, we know that if $a(\sigma_{v})=1$, then we have $d(\sigma_{v})=1$ for any $v \in \Sigma \setminus \Sigma_p$ as $T(\sigma_{v})=\xi_v+\xi_v^{-1} \equiv 2~ \operatorname{mod}~\pi$. Thus, we can find that for all $v \in \Sigma \setminus \Sigma_p$, $\rho(\mathfrak{r}')^o \mid_{G_{F_v}}= \begin{pmatrix}
			1 & 0\\
			0 & 1
		\end{pmatrix}$. Then $\mathfrak{r}' $ is a nice prime.
		
		In case b), if $y(\theta_{i}, \alpha)=0$ for all $\alpha \in \operatorname{Gal}(F_\Sigma/F)$, then it is the same as case a). If not, by Proposition \ref{u}, we know that $y(\theta_{i}, \tau_{\mathfrak{r}'}) \ne  0$, otherwise $y(\sigma_{\mathfrak{r}'}, \tau_{\mathfrak{r}'}) = 0$. By 3) in \textit{Step} I, $u(\sigma_{\mathfrak{r}'}, \theta_{i})$ is well-defined and integral over $R''/\mathfrak{r}'$, and hence a unit in $A$. Then we can take $\rho(\mathfrak{r}')^o=P\rho(\mathfrak{r}')P^{-1}$, where $$P= \begin{pmatrix}
			u(\sigma_{\mathfrak{r}'}, \theta_{i}) & 0\\
			0 & 1
		\end{pmatrix} \in \operatorname{GL}_2(A).$$
		
		Similar to the case a), we only need to check the fourth condition. From our construction, we know that either $u(\theta, \sigma_{\mathfrak{r}'})=0$ or $u(\theta, \theta_i)^{n}=1$ for any $\theta \in S$. As $(n,p)=1$, by Hensel's lemma, $ u(\theta, \theta_{i})$ is an element of $\mathbb{F}' \hookrightarrow \mathbb{F}'[[T]]=A$, where $\mathbb{F}' $ is the residue field of $A$. Thus, the fourth condition holds.
		
		In case c), we consider the continuous irreducible Galois representation $\rho(\mathfrak{r}')' : \operatorname{Gal}(F_{\Sigma}/F) \to \operatorname{GL}_2(A)$ in the following form $$
		\rho(\mathfrak{r}')'(\sigma)=\begin{pmatrix}
			a(\sigma) & y(\sigma, \tau_{\mathfrak{r}'})\\
			\frac{y(\sigma_{\mathfrak{r}'}, \sigma)}{y(\sigma_{\mathfrak{r}'}, \tau_{\mathfrak{r}'})}& d(\sigma)
		\end{pmatrix}, ~y(\sigma_{\mathfrak{r}'}, \tau_{\mathfrak{r}'}) \ne 0.$$
		
		If $y(\alpha, \theta_j)=0$ for all $\alpha \in \operatorname{Gal}(F_\Sigma/F)$, then we can take $$
		\rho(\mathfrak{r}')^o=\begin{pmatrix}
			0 & 1\\
			1 & 0
		\end{pmatrix} \rho(\mathfrak{r}')'\begin{pmatrix}
		0 & 1\\
		1 & 0
		\end{pmatrix} \subset \operatorname{GL}_2(A). $$
		Similar to case a), we can vefiry that $\mathfrak{r}'$ is a nice prime.
		If not, we can take $$ \rho(\mathfrak{r}')^o=\begin{pmatrix}
			0 & u'(\tau_{\mathfrak{r}'}, \theta_{j})\\
			1 & 0
		\end{pmatrix} \rho(\mathfrak{r}')'\begin{pmatrix}
			0 & 1\\
			u'(\theta_{j}, \tau_{\mathfrak{r}'}) & 0
		\end{pmatrix} \subset \operatorname{GL}_2(A). $$
	Similar to case b), we can also vefiry that $\mathfrak{r}'$ is a nice prime.
	\end{proof}
	
	From the previous lemma and Theorem \ref{R=T}, we know that $C$ is pro-modular.
	
\end{proof}

\begin{rem}\label{reason}
	1)  By Proposition \ref{connect dim}, for a one-dimensional prime $\mathfrak{p}$ of $R_\textnormal{aux}^\textnormal{ps} $, if it corresponds to an irreducible pseudo-representation, then at least one of the irreducible components of $R_\textnormal{aux}^\textnormal{ps} $ containing $\mathfrak{p}$ is of dimension at least $1+2[F:\mathbb{Q}]$. Hence $\mathfrak{p}$ is pro-modular.
	
	2) In the proof of Theorem \ref{pro}, we can also find that the irreducible component $C$ is of dimension $1+2[F:\mathbb{Q}]$. Note that the Iwasawa algebra $\Lambda_F$ is of dimension $1+[F:\mathbb{Q}]$. By the finiteness result (Proposition \ref{ordinary}), we have $\operatorname{dim} C^{\textnormal{ord}} \le [F:\mathbb{Q}]+1$. Combining Lemma \ref{cord}, we have $\operatorname{dim} C^{\textnormal{ord}} = [F:\mathbb{Q}]+1$, and hence $ \operatorname{dim} C = 2[F:\mathbb{Q}]+1$.
	
\end{rem}

As a consequence of our pro-modularity argument, we can get a conditional $R=\mathbb{T}$ theorem.

\begin{thm}\label{MINIMAL}
	 If we further assume $$[F:\mathbb{Q}] \ge \operatorname{max}\{1+\frac{1}{2}\operatorname{dim}_{\mathbb{F}}(H^1(F_{\Sigma}/F, \mathbb{F}(\bar{\chi}^{-1}))), 7|\Sigma \setminus \Sigma_p|+2\},$$ 
	then for any non-zero element $x \in H^1(F_{\Sigma}/F, \mathbb{F}(\bar{\chi}^{-1}))$, any prime ideal of $ R_x^\textnormal{aux}$ is pro-modular. Moreover, $ R_x^\textnormal{aux}$ is a local complete intersection ring of Krull dimension $1+2[F:\mathbb{Q}] $.
	
\end{thm}

\begin{proof}
	Let $\mathfrak{P}$ be a minimal prime of $ R_x^\textnormal{aux}$. Let $\mathfrak{P}'$ be the inverse image of $\mathfrak{P}$ via the natural map $f: R_\textnormal{aux}^\textnormal{ps} \to R_x^\textnormal{aux}$. From our assumptions, Proposition \ref{raux}, Proposition \ref{red} and Proposition \ref{pan}, we know that $\mathfrak{P}'$ is a minimal prime of dimension at least $1+2[F:\mathbb{Q}] $. By Theorem \ref{pro}, we know that $\mathfrak{P}$ is pro-modular. 
	
	By Remark \ref{reason}, we know that $ \mathfrak{P}'$ is of dimension $1+2[F:\mathbb{Q}] $. Consequently, $ R_x^\textnormal{aux}$ is of dimension at most $1+2[F:\mathbb{Q}] $. Combining Proposition \ref{dimension} and Proposition \ref{raux}, we obtain that $R_x^\textnormal{aux}$ is a local complete intersection ring of Krull dimension $1+2[F:\mathbb{Q}]$. This proves our result.
	
\end{proof}

\subsection{A special case}
In this subsection, we deal with the case $\bar{\chi}\mid_{G_{F_v}} = \omega_p$ for any $v|p$. In this case, the deduction is more complicated because Lemma \ref{cord} no longer holds. We follow the strategy of the proof in \cite[Section 7.4]{pan2022fontaine}.

Keep assumptions on as in Section \ref{sec 2} and Section \ref{sec 3} the finite set $\Sigma$ and the character $\chi$ in the previous two subsections. We further assume $[F:\mathbb{Q}] \ge 7|\Sigma \setminus \Sigma_p|+4.$

\begin{prop}\label{promodular to nice}
	Let $\mathfrak{r}$ be a one-dimensional irreducible pro-modular prime of $R_\textnormal{aux}^\textnormal{ps} $. Then every irreducible component of $R_\textnormal{aux}^\textnormal{ps} $ containing $\mathfrak{r}$ contains a nice prime, and hence is pro-modular.
\end{prop}

\begin{proof}
	Let $Z$ be the set of all irreducible components of $(R_\textnormal{aux}^\textnormal{ps})_{\mathfrak{r}} $. Write $Z= Z_1 \amalg Z_2$, where $Z_1$ is the subset of $Z$ consisting of all pro-modular irreducible components. We first show that if $Z_1$ is not empty, then $Z_2$ is empty.
	
	If not, by Proposition \ref{connect dim}, there exist an irreducible component $Y \in Z_2$ and a pro-modular prime $\mathfrak{r}' \in Y$ of dimension at least $2[F: \mathbb{Q}]-1$.
	
	Arguing as the proof of Theorem \ref{pro}, we may find a prime $\mathfrak{r}'' \subset \mathfrak{r}'$ satisfying the following conditions:
	\begin{itemize}
		\item $\varpi \in \mathfrak{r}''$.
		\item  The dimension of $\mathfrak{r}'' $ is at least $ 2[F: \mathbb{Q}]-7|\Sigma\setminus\Sigma_p|-2 \ge [F: \mathbb{Q}]+2$.
		\item  For any irreducible one-dimensional prime $\mathfrak{t}$ containing $\mathfrak{r}''$, there exists a $2$-dimensional Galois representation $\rho(\mathfrak{t}) : G_{F, \Sigma} \to \operatorname{GL}_2(k(\mathfrak{t}))$ satisfying the conditions in (2) of Definition \ref{nice} except the third one.
	\end{itemize}
	
	If $\bar{\chi}$ is not quadratic, then by Lemma \ref{equiv}, we can find a nice prime containing $\mathfrak{r}''$ (hence containing $\mathfrak{r}'$). Using Theorem \ref{R=T}, we conclude that $Y$ is pro-modular, which contradicts to our definition of $Z_2$.
	
	If $\bar{\chi}$ is quadratic, we know that the splitting field $F(\bar{\chi})$ is an abelian CM field. Then by Corollary \ref{red-one}, we may find a prime containing $\mathfrak{r}'$, whose corresponding Galois representation is irreducible and not dihedral. From our choice of $\mathfrak{r}''$, we know that this is actually a nice prime. Using Theorem \ref{R=T} again, we get a contradiction.
	
	As $\mathfrak{r}$ is pro-modular, using Corollary \ref{dimofT}, we know that there exists a pro-modular prime $\mathfrak{r}_1$ contained in $\mathfrak{r}$ of dimension at least $1+2[F: \mathbb{Q}]$. Arguing as above, we can find a nice prime containing $\mathfrak{r}_1$. This implies that  at least one of the irreducible component containing $\mathfrak{r}$ is pro-modular. Hence, $Z_1$ is not empty.
\end{proof}

From now on, we assume $\bar{\chi}\mid_{G_{F_v}} = \omega_p$ for any $v|p$.
Suppose that $C$ is an irreducible component of $R_\textnormal{aux}^\textnormal{ps} $ of dimension at least $1+2[F:\mathbb{Q}]$. In this case, by \cite[Lemma 7.4.4]{pan2022fontaine}, the kernel of the natural surjection $ R_v^\textnormal{ps} \twoheadrightarrow R_v^{\textnormal{ps,ord}}$ can be generated by two elements. Then by Krull's principal ideal theorem, we have $\operatorname{dim} C^{\textnormal{ord}} \ge 1$. 

Choose a prime $\mathfrak{q} \in C^{\textnormal{ord}}$ such that $\dim R_\textnormal{aux}^\textnormal{ps}/\mathfrak{q}=1$. Enlarging $\mathcal{O}$ if necessary, we may assume the normalization of $R_\textnormal{aux}^\textnormal{ps}/\mathfrak{q}$ is either $\mathcal{O}$ or isomorphic to $\mathbb{F}[[T]]$. There are three possibilities for the associated semi-simple representation $\rho(\mathfrak{q})$:
\begin{enumerate}
	\item $\rho(\mathfrak{q})$ is irreducible.
	\item $\rho(\mathfrak{q})\cong\psi_1\oplus \psi_2$ is reducible  and $\psi_1/\psi_2$ is not of the form $\epsilon^{\pm 1}\theta$, where $\theta$ is a finite order character of $G_{F}$. We call this case \textit{generic reducible}.
	\item $\rho(\mathfrak{q})\cong\psi_1\oplus \psi_2$ and $\psi_1/\psi_2=\varepsilon\theta$, for some finite order character $\theta$ of $G_{F}$. We call this case \textit{non-generic reducible}.
\end{enumerate}

\begin{thm}\label{special case}
	The irreducible component $C$ is pro-modular of dimension $1+2[F:\mathbb{Q}]$.
\end{thm}

\begin{proof}
	As the discussions above, we divide the proof into three cases.
	
	\textit{1. The irreducible case}
	
	Using the same proof of \cite[Lemma 7.4.7]{pan2022fontaine}, we find that the localization $(R^{\textnormal{ps,ord}})_{\mathfrak{q}}$ is of dimension at least $[F:\mathbb{Q}]$. In other words, there exists an irreducible component of $ R^{\textnormal{ps,ord}}$ containing $\mathfrak{q}$ of dimension at least $1+[F:\mathbb{Q}]$. Arguing as in the proof of Theorem \ref{pro}, we know that $\mathfrak{q}$ is pro-modular. By Proposition \ref{promodular to nice} and Remark \ref{dim of nice}, we know that $C$ is pro-modular of dimension $1+2[F:\mathbb{Q}]$.
	
	\textit{2. The generic reducible case}
	
	By the same proof as in \cite[Section 7.4.4]{pan2022fontaine}, we have $\operatorname{dim} C^{\textnormal{ord}} \ge 1+[F:\mathbb{Q}]$ (see \cite[Corollary 7.4.9]{pan2022fontaine}). Thus, as in the irreducible case, we also have $C$ is pro-modular of dimension $1+2[F:\mathbb{Q}]$.
	
	\textit{3. The non-generic reducible case}
	
	We only sketch the proof. 
	
	For any non-zero class $B\in \operatorname{Ext}^1_{E[G_{F,\Sigma}]}(\psi_1,\psi_2) \cong H^1(G_{F,\Sigma},E(\psi_2/\psi_1))$, it corresponds to a non-split extension $\rho_B: G_{F,\Sigma}\to\operatorname{GL}_2(E)$ of $\psi_1$ by $\psi_2$. We may assume $\rho_B$ is of the form
	\[\begin{pmatrix} \psi_2 & b_B \\ 0 & \psi_1\end{pmatrix}.\] We denote the universal deformation ring of $\rho_B$ with fixed determinant $\chi$ by $R_B$. 
	
	We define $Z_B$ as the set of irreducible components of $R_\textnormal{aux}^\textnormal{ps}$ whose generic points lie in the image of $\operatorname{Spec} R_{B}\to \operatorname{Spec} R_\textnormal{aux}^\textnormal{ps}$. 
	
	First, by the arguments in \cite[Step (1), page 1153]{pan2022fontaine}, there exists a non-zero element $B_0$ in $H^1(G_{F,\Sigma},E(\psi_2/\psi_1)) $ such that we can find a pro-modular irreducible component in $Z_{B_0}$. Using \cite[Corollary 7.4.13]{pan2022fontaine} and Proposition \ref{promodular to nice}, we know that any irreducible component in  $Z_{B_0}$ is pro-modular.
	
	Second, using the same proof in \cite[Step (2), page 1154 \& 1155]{pan2022fontaine}, we can find an element $B_1 $ in $H^1(G_{F,\Sigma},E(\psi_2/\psi_1)) $ such that $C \in Z_{B_1}$ and	there exist irreducible components $C_{B_0}\in Z_{B_0}, C_{B_1}\in Z_{B_1}$ containing a prime of dimension $\mathfrak{s}$ at least $2+[F: \mathbb{Q}]$. As $C_{B_0} $ is pro-modular, we know that $\mathfrak{s}$ is pro-modular. By Corollary \ref{red-one}, we can find a one-dimensional irreducible pro-modular prime in $C_{B_0} \cap C_{B_1} $. Combining Proposition \ref{promodular to nice}, we know that $C_{B_1} $ is pro-modular. Using \cite[Corollary 7.4.13]{pan2022fontaine} and Proposition \ref{promodular to nice} again, we know that any irreducible component in  $Z_{B_1}$ is pro-modular. In particular, $C$ is pro-modular of dimension $1+2[F:\mathbb{Q}]$.
\end{proof}

\begin{rem}\label{adv}
	1) If $\bar{\chi}$ is quadratic, then we cannot ignore the dihedral condition in the defintion of a nice prime. By Corollary \ref{red-pseudo}, the upper bound of the dihedral locus is $1+[F: \mathbb{Q}]$, so we may not find a nice prime from the pro-modular prime $\mathfrak{s}$ as its dimension may not be large enough. However, we can find one-dimensional irreducible pro-modular prime from $\mathfrak{s} $ and then use Proposition \ref{promodular to nice}, which is the advantage of our method (particularly Corollary \ref{innovation'}) in this paper.
	
	2) We can also prove the same results as Theorem \ref{MINIMAL} using the same proof.
\end{rem}

\newpage
\bibliography{main.bib}

\begin{thebibliography}{{Sta}23}

\bibitem[All19]{allen2019automorphic}
Patrick~B. Allen.
\newblock On automorphic points in polarized deformation rings.
\newblock {\em American Journal of Mathematics}, 141(1):119--167, 2019.

\bibitem[AM69]{MR242802}
M.~F. Atiyah and I.~G. Macdonald.
\newblock {\em Introduction to commutative algebra}.
\newblock Addison-Wesley Publishing Co., Reading, Mass.-London-Don Mills, Ont.,
  1969.

\bibitem[AT68]{artin}
Emil Artin and John Tate.
\newblock {\em Class field theory}.
\newblock W. A. Benjamin, Inc., New York-Amsterdam, 1968.

\bibitem[BC09]{bellaiche2009families}
Jo{\"e}l Bella{\"\i}che and Ga{\"e}tan Chenevier.
\newblock Families of {Galois} representations and {Selmer} groups.
\newblock {\em Ast{\'e}risque}, 324:1--314, 2009.

\bibitem[B{\"o}c01]{bockle2001density}
Gebhard B{\"o}ckle.
\newblock On the density of modular points in universal deformation spaces.
\newblock {\em American Journal of Mathematics}, 123(5):985--1007, 2001.

\bibitem[Deo23]{deo2023density}
Shaunak~V. Deo.
\newblock On density of modular points in pseudo-deformation rings.
\newblock {\em International Mathematics Research Notices}, page rnad037, 2023.

\bibitem[Eis13]{eisenbud2013commutative}
David Eisenbud.
\newblock {\em Commutative algebra: with a view toward algebraic geometry},
  volume 150.
\newblock Springer Science \& Business Media, 2013.

\bibitem[Gou06]{gouvea2006arithmetic}
Fernando~Q. Gouv{\^e}a.
\newblock {\em Arithmetic of p-adic modular forms}, volume 1304.
\newblock Springer, 2006.

\bibitem[Kis09]{kisin2009fontaine}
Mark Kisin.
\newblock The {Fontaine-Mazur} conjecture for $\operatorname{GL}_2$.
\newblock {\em Journal of the American Mathematical Society}, 22(3):641--690,
  2009.

\bibitem[Maz89]{mazur1989deforming}
Barry Mazur.
\newblock Deforming {G}alois representations.
\newblock In {\em Galois Groups over $\mathbb{Q}$: Proceedings of a Workshop
  Held March 23--27, 1987}, pages 385--437. Springer, 1989.

\bibitem[NSW13]{neukirch2013cohomology}
J{\"u}rgen Neukirch, Alexander Schmidt, and Kay Wingberg.
\newblock {\em Cohomology of number fields}, volume 323.
\newblock Springer Science \& Business Media, 2013.

\bibitem[Pan22]{pan2022fontaine}
Lue Pan.
\newblock The {Fontaine-Mazur} conjecture in the residually reducible case.
\newblock {\em Journal of the American Mathematical Society}, 35(4):1031--1169,
  2022.

\bibitem[Pa{\v{s}}13]{pavskunas2013image}
Vytautas Pa{\v{s}}k{\=u}nas.
\newblock The image of {Colmez's} montreal functor.
\newblock {\em Publications math{\'e}matiques de l'IH{\'E}S}, 118:1--191, 2013.

\bibitem[{Sta}23]{stacks-project}
The {Stacks project authors}.
\newblock The stacks project.
\newblock \url{https://stacks.math.columbia.edu}, 2023.

\bibitem[SW99]{skinner1999residually}
Christopher~M. Skinner and Andrew~J. Wiles.
\newblock Residually reducible representations and modular forms.
\newblock {\em Publications Math{\'e}matiques de l'IH{\'E}S}, 89:5--126, 1999.

\bibitem[Tay08]{PMIHES_2008__108__183_0}
Richard Taylor.
\newblock Automorphy for some $l$-adic lifts of automorphic mod $l$ {Galois}
  representations. {II}.
\newblock {\em Publications Math\'ematiques de l'IH\'ES}, 108:183--239, 2008.

\bibitem[Zha24]{ZHANG2024}
Xinyao Zhang.
\newblock On the {F}ontaine-{M}azur conjecture for $p=3$.
\newblock \url{https://arxiv.org/abs/2412.06812}, 2024.

\end{thebibliography}
\bibliographystyle{alpha}~
~

Graduate School of Mathematical Sciences, University of Tokyo, Komaba, Meguro, Tokyo 153-8914, Japan\\

\textit{Email address}: zhangxy96@g.ecc.u-tokyo.ac.jp

\end{document}